\documentclass{article}

\usepackage[utf8]{inputenc}
\usepackage{float}
\usepackage{ stmaryrd }
\usepackage{amsthm}
\usepackage[all]{xy}
\usepackage{amsmath}
\usepackage{amssymb}
\usepackage{amsfonts,mathabx}
\usepackage{yfonts,multirow}
\usepackage{color}
\usepackage{comment}

\usepackage{tikz}
\usetikzlibrary{matrix,arrows,decorations.pathmorphing,patterns}

\pgfdeclaredecoration{complete sines}{initial}
{
    \state{initial}[
        width=+0pt,
        next state=sine,
        persistent precomputation={\pgfmathsetmacro\matchinglength{
            \pgfdecoratedinputsegmentlength / int(\pgfdecoratedinputsegmentlength/\pgfdecorationsegmentlength)}
            \setlength{\pgfdecorationsegmentlength}{\matchinglength pt}
        }] {}
    \state{sine}[width=\pgfdecorationsegmentlength]{
        \pgfpathsine{\pgfpoint{0.25\pgfdecorationsegmentlength}{0.5\pgfdecorationsegmentamplitude}}
        \pgfpathcosine{\pgfpoint{0.25\pgfdecorationsegmentlength}{-0.5\pgfdecorationsegmentamplitude}}
        \pgfpathsine{\pgfpoint{0.25\pgfdecorationsegmentlength}{-0.5\pgfdecorationsegmentamplitude}}
        \pgfpathcosine{\pgfpoint{0.25\pgfdecorationsegmentlength}{0.5\pgfdecorationsegmentamplitude}}
}
    \state{final}{}
}
\usepackage{enumerate}

\newtheorem{theorem}{Theorem}

\newtheorem{proposition}{Proposition}
\newtheorem{lemma}{Lemma}

\newtheorem{corollary}{Corollary}

\newtheorem{fact}{Fact}
\theoremstyle{definition}
\newtheorem{definition}{Definition}

\newcommand{\blue}[1]{\textcolor{blue}{#1}}
\newcommand{\ignore}[1]{}
\newcommand{\Mod}{\mathcal{M}}
\newcommand{\f}{\varphi}
\newcommand{\p}{\psi}

\newcommand\Lprop{\mathcal{L}_{\At}}

\newcommand\M{\mathcal{M}}

\DeclareMathOperator{\At}{Prop}
\DeclareMathOperator{\lit}{Lit}
\newcommand\fvset{{\mathbb{P}_4}}   
\newcommand\nsset{{\mathbb{P}_{ns}}}    
\newcommand\tr{{tr_{ns}^4}}        
\newcommand\trinv{{tr_4^{ns}}}    
\newcommand\Canm{{\mathbb{M}^{can}}}  
\newcommand\udb{{\{A,E\}}}                           
\newcommand\lwedge{\sqcap}                     
\newcommand\lvee{\sqcup}                         
\newcommand\BD{{BD_4}}                          

\newcommand\swedge{,}   
\newcommand\sneg{\overline} 

\usepackage[round]{natbib}

\title{Probabilities with Gaps and Gluts}
\author{Dominik Klein, Ondrej Majer, Soroush Rafiee Rad}
\date{}
\sloppy
\begin{document}
\maketitle
\begin{abstract}
Belnap-Dunn logic (BD), sometimes also known as First Degree Entailment, is a four-valued propositional logic that complements the classical truth values of {\it True} and {\it False} with two non-classical truth values {\it Neither} and {\it Both}. The latter two are to account for the possibility of the available information being incomplete or providing contradictory evidence. In this paper, we present a probabilistic extension of BD that permits agents to have probabilistic beliefs about the truth and falsity of a proposition. 
We provide a sound and complete axiomatization for the framework defined and also identify policies for conditionalization and aggregation. Concretely, we introduce four-valued equivalents of Bayes’ and Jeffrey updating and also suggest mechanisms for aggregating information from different sources.\end{abstract}
{\it Keywords}: {Belnap-Dunn logic, First Degree Entailment, Non-standard probability theory, Probability theory, Bayes' updating, Jeffrey updating, Probability Aggregation}
\section{Introduction}

In learning about a classical system that adheres to the laws of propositional logic, we may be faced with information that does not. Naturally, if information is scarce, our evidence may contain truth value {\it gaps}, neither indicating certain propositions to be true nor  false. But we may also be faced with contradictory information, especially when our insights are gained by combining various bodies of evidence. This may lead to truth value {\it gluts}, i.e. propositions that are labelled  as {\it both} true and false.

There has been many attempts in the literature to develop formal systems for capturing and analyzing such  non-classical situations.  These   are generally divided into two camps. The first is motivated by adopting the philosophical position of dialetheism as defended by \cite{PriestBook, priest2007}, advocating the thesis that there are true contradictions, i.e.\ sentences which are both true and false. Corresponding formal systems should thus allow for assigning both truth values to a sentence simultaneously. Probably the most well known example of such logical systems is the logic LP \citep{priest1979, priest2002}. 

The second camp takes the existence of gaps and gluts as a pathological consequence of imperfect information. Crucially, one may hope  than even  imperfect information would allow for at least some reliable inferences. 
In general, there are two ways to go here. One could either make the set of premises consistent or develop non-trivial inference rules that work on  inconsistent sets of premises. Consistency of premises can be obtained by  focusing on  maximal consistent subsets, cf. \cite{RescherManor,KleinMarra}, or  by employing belief revision, as in AGM systems \citep{agm}.  Mechanisms for dealing with inconsistent information, on the other hand, are developed in a variety of frameworks such as {\em discussive logic} \citep{jaskowski}, {\em adaptive logic}   \citep{batens}, Da Costa's {\em logics of formal inconsistency} (\citeyear{dC1, dC2}), {\em relevant logic} of \cite{anderson} and their variants. 


Another well-known logical framework that falls in this last category is Belnap-Dunn logic  \citep[BD. cf.][]{Belnap77, Belnap19, dunn1976}, sometimes also going by the name of First Degree Entailment.  Briefly, this system rests on two assumptions. The first is that gaps and gluts may occur even for boundedly rational agents, as information may be limited (gaps) and the question of whether a given belief set is consistent (i.e. checking for the absence of gluts) is known to be NP-hard.  Building on the latter claim, BD's second assumption is that the logic of information should {\it not} validate the principle of explosion\footnote{The principle of explosion states that {\it every} formula can be derived from a contradiction.}. Just to the contrary, BD stipulates that a body of information may afford us substantial insights about some matter $q$, even if it contains contradictory information about some other $p$ that is completely unrelated to $q$. Belnap-Dunn logic, in short, is a substructural logic, that invalidates explosion and tracks which insights can be inferred from an information base that may contain gaps and gluts. 

But of course, the problem of insufficient or contradictory information does not apply to categorial {\it true-false} information only. Rather, probabilistic information is affected by similar arguments about gaps and gluts as those outlined above. 
In his \citeyear{josang} paper, \citeauthor{josang} puts forward a framework for three valued probabilities, incorporating uncertainty as third value that may occur naturally  when evidence is  ambiguous or insufficient. Notably, this framework circumvents the debated principle of insufficient reason by distinguishing situations of insufficient information from those, where equally strong evidence is available for and against some proposition. Later approaches extend this to   four-valued probabilities, where the fourth value represents conflicting information, or gluts. The necessity of gluts is often argued for by considering a Bayesian agent who receives two pieces of mutually contradictory information from sources she judges highly reliable, cf. the firefighter example in \cite{dunn2019contradictory}. 

In short, these arguments call for a four-valued probabilistic generalization of Belnap-Dunn logic in a similar way as classical probability theory  generalizes propositional logic. In a first approach to this project  Michael Dunn  (\citeyear{DunnProb})  has defined a four-valued probabilistic framework and has studied logical properties of the resulting probabilistic entailment. In a similar vein, Childers, Majer and Milne  (\citeyear{childers_at_al}) have put forward a single-valued approach to non-standard probabilities motivated by a frequentist interpretation where probability gaps and gluts may occur naturally if probabilities are derived from sampling  two independent sources. They further substantiate the approach by providing a subjectivist interpretation of non-standard probabilities and the corresponding Dutch Book Argument. 

In the present paper, we offer a novel framework for non-standard probabilities that reconciles \citeauthor{DunnProb}'s and \citeauthor{childers_at_al}'s lines of work (Section \ref{ProbMod}). In doing so, we pursue four major goals. The first is to provide a translation mechanism between four-valued and single-valued non-standard probabilities, showing that these are different but equivalent perspectives on the same phenomenon. (Section \ref{corr}). The second aim relates to an axiomatization of the  system defined. While  \cite{DunnProb} analyzes  logical properties of the probabilistic inference relation ensuing from his approach, no axiomatization of the probabilistic system itself has been put forward so far. To fill this gap, we provide an axiomatization of the non-standard probabilities defined here (Section \ref{ax}) and show this axiomatization sound and complete with respect to a certain class of probabilistic models (Section \ref{soundcomp}). 
 While building on Dunn's approach, our framework slightly deviates from  his in order to avoid certain conceptual problems. As we will show, 
both, J\o sang's three valued probabilities as well as Dunn's four-valued probability implicitly assume all events to be mutually probabilistically independent. Under this assumption, the question of conditionalization trivializes, as no proposition bears any information about any other proposition.  In the present framework, we abandon this independence assumption. Consequentially, the question of conditional probabilities becomes meaningful. Defining and studying an adequate notion of conditionalization is our third goal, pursued in Section \ref{cond}. The fourth goal, finally, is related to aggregation, i.e. the question of how to combine probabilistic information from various sources. Here, we will introduce various policies and study their respective properties (Section \ref{aggr}).
	
\section{Logical Preliminaries}	

We start by giving a brief recollection of  Belnap-Dunn four-valued logic before proceeding to introduce its probabilistic extensions. Belnap-Dunn four-valued logic is defined over a propositional language that is built over  a set $\At$ of propositional variables. Formally, the logical language $\Lprop$   is given by the Backus-Naur form:

$$\varphi:: p\, \vert\, \neg \varphi, \, \vert \, \varphi \wedge \varphi$$

\noindent Disjunction ($\vee$) is defined in the standard way. The main difference to classical propositional logic consists in the way that formulas are evaluated. In  classical propositional logic, evaluations are defined as functions $v: \Lprop \to\{0, 1\}$ that are derived from a valuation on the set of atoms $\At$.  For Belnap-Dunn logic there are two ways to define evaluations. One approach is to define evaluations as functions $v: \Lprop \to  \mathcal{P}(\{0,1\})$. In other words instead of evaluating formulas on the two element lattice 

\begin{center}
 \begin{tikzpicture}[>=stealth',relative]

    \node (U1) at (0,-1) {$\{0\}$};
       \node (U4) at (0,1) {$\{1\}$};

 \path[-,draw] (U1) to (U4);

    \node (L1) at (-0.9,-1.5) {};
     \node (L2) at (0.9,-1.5) {};
      \node (L3) at (-1.5,-0.9) {};
       \node (L4) at (-1.5,0.9) {};
       
       \end{tikzpicture}
\end{center}

they are interpreted on the four element lattice $\BD$

\begin{center}
 \begin{tikzpicture}[>=stealth',relative]

    \node (U1) at (0,-1) {$\{0\}$};
     \node (U2) at (-1,0) {$\{\ \}$};
      \node (U3) at (1,0) {$\{1,0\}$};
       \node (U4) at (0,1) {$\{1\}$};

 \path[-,draw] (U1) to (U2);
 \path[-,draw] (U1) to (U3);
  \path[-,draw] (U2) to (U4);
   \path[-,draw] (U3) to (U4);
    
    \node (L1) at (-0.9,-1.5) {};
     \node (L2) at (0.9,-1.5) {};
      \node (L3) at (-1.5,-0.9) {};
       \node (L4) at (-1.5,0.9) {};
       
       \end{tikzpicture}
\end{center}

\noindent Evaluating formulas  in the four element lattice $\mathcal{P}(\{0,1\})$ allows for the assignment of two new  truth values $\{\}$ and $\{0, 1\}$.  
These represent so-called truth-value {\bf gaps} and {\bf gluts}, i.e. situations where formulas obtain neither resp. both of the classic truth values. 
Formally, the evaluation is defined inductively, starting from an atomic valuation  $v: \At \to {\mathcal{P}(\{0,1\})}$, by:

	\begin{center}
	\begin{tabular}{lcl}
	$1 \in v(\neg \phi) $&\text{ iff }&$ 0 \in v(\phi)$\\
	$0 \in v(\neg \phi) $&\text{ iff }&$ 1 \in v(\phi)$\\
	$1 \in v(\phi \wedge \psi) $&\text{ iff }&$ 1 \in v(\phi) \text{ and } 1 \in v(\psi)$\\
	$0 \in v(\phi \wedge \psi) $&\text{ iff }&$ 0 \in v(\phi) \text{ or } 0 \in v(\psi)$
	\end{tabular}
	\end{center}

\noindent An alternative approach is to use two separate classical valuations, called  the positive valuation $v^+$ and the negative valuation $v^-$. Building on atomic valuations $v^+: \At \to {\{0,1\}}$ and $v^-: \At \to {\{0,1\}}$, these are defined for  $\phi, \psi \in \Lprop$ as: 

	\begin{center}
	\begin{tabular}{lcl}
	 $v^+(\neg \phi)=1$ &\text{ iff }&$ v^{-}(\phi)=1$\\
	 $v^-(\neg \phi)=1 $ &\text{ iff }&$ v^{+}(\phi)=1$\\
	$v^{+}(\phi \wedge \psi)=1 $ &\text{ iff }&$ v^{+}(\phi)=1 \text{\ and\ } v^{+}(\psi)=1$\\	
	$v^{-}(\phi \wedge \psi)=1 $ &\text{ iff }&$ v^{-}(\phi)=1 \text{\ or\ } v^{-}(\psi)=1$\\	
	\end{tabular}
	\end{center}

Both approaches yield equivalent semantics for Belnap-Dunn logic, as is easily seen.  For  reasons of notational convenience,  we will employ  the double valuation approach. Within this approach, we can  define an entailment relation as $\phi \vDash_{L} \psi$ if and only if $v^{+}(\phi)=1 \Rightarrow v^{+}(\psi)=1$ for all double valuations $(v^+,v^-)$. This entailment relation goes by the name of  {\bf first degree entailment}.

 An important property of  Belnap-Dunn logic, that we will make heavy use of later, is that it admits disjunctive (as well as conjunctive) normal forms. Just as in classical logic, a formula in disjunctive normal form is written as a disjunction of conjunctions of literals. However, unlike in  classical logic, an atom might appear  both positively and negatively within a conjunctive clause.
 
 \begin{theorem}\label{normform}(Theorem 3.9 in \cite{font1997belnap}) Every formula of Belnap-Dunn logic is equivalent to a formula in a conjunctive (disjunctive) normal form.
 \end{theorem}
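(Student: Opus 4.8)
The plan is to proceed by structural induction on the formula, reducing it to normal form through a sequence of semantics-preserving rewrites. Throughout, call two formulas equivalent when they receive the same value under every double valuation, i.e. $v^+(\phi)=v^+(\psi)$ and $v^-(\phi)=v^-(\psi)$ for all $(v^+,v^-)$; this is stronger than mere first degree interentailment and is clearly preserved by every rewrite we will use. Reading off the evaluation clauses, $v^+$ treats $\wedge$ as $\min$ and (via the definition of $\vee$) $\vee$ as $\max$, while $v^-$ does the opposite, and negation simply swaps the two valuations. Hence the four truth values, ordered as in the lattice $\BD$, form a distributive lattice equipped with an involutive, order-reversing negation — a De Morgan algebra — so that every De Morgan-algebra identity is an admissible rewrite on $\Lprop$.

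First I would record the identities that drive the reduction: double negation $\neg\neg\phi \equiv \phi$, the De Morgan laws $\neg(\phi \wedge \psi) \equiv \neg\phi \vee \neg\psi$ and $\neg(\phi \vee \psi) \equiv \neg\phi \wedge \neg\psi$, and the two distributive laws, together with associativity, commutativity, and idempotence of $\wedge$ and $\vee$. Each of these is checked in one line against the $\min/\max/\text{swap}$ description above. Using double negation and De Morgan, an induction on complexity pushes every negation down to the atoms, producing an equivalent formula in negation normal form — a formula built from literals $p$ and $\neg p$ using only $\wedge$ and $\vee$. A second induction then applies distributivity repeatedly: distributing $\wedge$ over $\vee$ yields a disjunction of conjunctions of literals, i.e. a disjunctive normal form, while dually distributing $\vee$ over $\wedge$ yields a conjunctive normal form.

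The point to be careful about — and the reason the statement is genuinely weaker than its classical counterpart — is that the familiar clean-up steps of classical normalization are unavailable. In $\BD$ neither is $\phi \vee \neg\phi$ valid nor is $\phi \wedge \neg\phi$ contradictory, so one cannot delete a clause containing both $p$ and $\neg p$, nor absorb complementary literals. The induction must therefore invoke only the De Morgan-algebra identities listed above and must tolerate conjunctive clauses in which a single atom occurs with both polarities, exactly as the remark preceding the theorem anticipates. Since no rewrite beyond distributivity, De Morgan, and double negation is required, this restriction is the only real subtlety; the bookkeeping of the two nested inductions is otherwise routine.
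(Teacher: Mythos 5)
The paper offers no proof of this theorem---it is cited verbatim from Font (Theorem 3.9)---so there is no in-paper argument to compare against; your proof is correct and is essentially the standard one underlying that citation: the four truth values of $\BD$ form a De Morgan algebra, so double negation, the De Morgan laws, and distributivity are sound rewrites, and the two-stage reduction (negation normal form, then distribution) goes through. You also correctly identify the one genuine subtlety, namely that clauses containing both $p$ and $\neg p$ cannot be cancelled, exactly as the paper's surrounding remarks anticipate.
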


Moreover, up to permutation of conjuncts and disjuncts, formulas in conjunctive (disjunctive) normal form may be identified with finite families of finite sets of literals 
(Theorem 3.15 in \cite{prenosil2018reasoning}).


\section{Probabilistic Models}\label{ProbMod}

The double valuation approach's starting assumption is that positive and negative evidence are distinct. That is the absence of positive evidence for some $p$ is not the same as negative evidence against $p$ (or positive evidence for $\neg p$, if you will). In particular, there may be gaps, where neither evidence for $p$ nor  against $p$ is available, and gluts, where evidence of both types is present. Within our models, we must hence treat positive and negative evidence separately. In the following, we assume $\At$ finite and constant. Also, we will denote the set of literals over $\At$ by $\lit$, i.e. $\lit:=\At\cup\{\neg p\mid p\in\At\}$.

	\begin{definition}\label{modeldef}
	 A {\bf non-standard model} is a triple  $\mathcal{M} = \langle\Sigma, v^+, v^-\rangle$ where $\Sigma$ is a finite or countably infinite set of states and $v^+, v^-:\Sigma \times \At \to {\{0,1\}}$ are called the positive (negative) valuation function respectively.
	 For $p \in \At$ we let $v^{\pm}(p)= \{s \in \Sigma \, \vert \, v^{\pm}(s, p)=1\}$.
	\end{definition}

Hence, a state $s$ of a model $\M$ might be assigned an inconsistent set of propositions (i.e., $s\in v^+(p)\cap v^-(p) $ for some $p\in Prop$), and may remain undecided about some propositions ($s\not\in v^+(q)\cup v^-(q) $ for some $q\in Prop$). 

Non-standard models provide a semantics for BD. More specifically, logical formulas of $\Lprop$ are evaluated on model-state pairs, using relations $\models^+$ and $\models^-$.  From this, we then obtain the notions of a positive and negative extension.
\begin{definition} Let $\mathcal{M}=\langle\Sigma, v^+, v^-\rangle$  be a non-standard model, $s\in\Sigma$ a state and $\f,\p\in\Lprop$ be formulas. Then\\
$i)$ The {\bf semantics} of $\Lprop$ on $(\mathcal{M},s)$ is given by:
		
	\begin{align*}
	\Mod, s &\models^+ p &&\text{ iff } 	s \in v^{+}(p)\\
	\Mod, s &\models^- p &&\text{ iff } 	s \in v^{-}(p)\\
	\Mod, s &\models^{+} \f \wedge \psi &&\text{ iff } 	\Mod, s \models^{+} \f\text{ and }\Mod, s \models^{+} \psi\\	
	\Mod, s &\models^{-} \f \wedge \psi &&\text{ iff } 	\Mod, s \models^{-} \f\text{ or }\Mod, s \models^{-} \psi\\	
	\Mod, s &\vDash^+ \neg \f &&\text{ iff }  \Mod, s \models^- \f \\
	\Mod, s &\models^- \neg\f  &&\text{ iff } \Mod, s \models^+ \f 
	\end{align*}
	
$ii)$ The {\bf positive} and {\bf negative extensions} of $\f \in\Lprop$ are	
	\begin{align*} |\f|^+_\Mod &= \{s \in \Sigma \,\vert \, \mathcal{M}, s \vDash^+ \f\} \\ |\f|^-_\Mod &= \{s \in \Sigma \,\vert \, \mathcal{M}, s \vDash^- \f\}\ \ \ (= \{s \in \Sigma \,\vert \, \mathcal{M}, s \vDash^+ \neg\f\}  ) \end{align*}
We define the entailment relation between sentences in the usual way: $\phi \vDash^{\pm} \psi$ if and only if for all models $\mathcal{M}$ and states $\sigma$, if $\mathcal{M}, s \vDash^{\pm} \phi$ then $\mathcal{M}, s \vDash^{\pm} \psi$. Observe the obvious connection between positive and negative extension: $|\neg\f|^+_\Mod = |\f|^-_\Mod$.
Moreover, we define the {\bf set of  pure belief,  pure disbelief, conflict} and {\bf uncertainty} about $\f$ as 
\begin{align*}
|\f|_\Mod^b=&|\f|_\Mod^+\setminus|\f|_\Mod^- & |\f|_\Mod^d=&|\f|_\Mod^-\setminus|\f|_\Mod^+\\
|\f|_\Mod^c=&|\f|_\Mod^+\cap|\f|_\Mod^- & |\f|_\Mod^u=&\Sigma\setminus(|\f|_\Mod^+\cup|\f|_\Mod^- ).
\end{align*}
	\end{definition}

\noindent The terms {\it belief} and {\it disbelief}, of course, refer to the intended interpretation as doxastic state. Whenever clear by context, we omit the subscript $\Mod$.

 Towards a semantics of non-standard probability theory, we expand the non-standard model defined above with a probability measure that is classic. Non-classicality of the ensuing probability assignments, then, will be derived from the underlying valuations only, i.e. from the fact that non-standard models allow for  gaps and gluts of truth values.

	\begin{definition}\label{pmodeldef}
		A {\bf probabilistic model}  is a tuple  $\mathcal{M} = \langle\Sigma,\mu, v^+, v^-\rangle$ where $\langle\Sigma, v^+, v^-\rangle$ is a non-standard model and $\mu$ is a probability measure on the full subset algebra of $\Sigma$. 
	\end{definition}
	
\noindent Building on probabilistic models, we can derive two different probability assignments from $\mathcal{M}$, one four-valued, the other single valued. These are:

\begin{definition}
For a probabilistic model $\mathcal{M}=\langle\Sigma, \mu, v^+, v^-\rangle$,\\ 
$i)$  the induced {\bf non-standard probability function} $p_{\mu}:\Lprop\rightarrow\mathbb{R}$ is: $$p_{\mu}(\varphi)=\mu(|\varphi|_\M^+)$$
$ii)$ the induced {\bf four-valued probability function} 	$\hat{p}_{\mu}: \Lprop \to \mathbb{R}^4 $ is 	
	$$\hat{p}_{\mu}(\f) = \left(\mu(|\f|^{b}), \mu(|\f|^{d}), \mu(|\f|^{u}), \mu(|\f|^{c})\right).\footnote{We use the following convention in naming of probability measures i) p-like names stand for syntactic measures (on languages) and $\mu$-like names for measures on spaces. ii) Hat-superscripts are used to denote four-valued probabilities. iii) The Subscript $\mu$ may be used if a syntactic measure is derived from a space.}$$
	\end{definition}

To  end  this section, we'd like to highlight a strong similarity to classic probabilistic models. Classic probability assignments can be derived from  {\it possible worlds models} equipped with a probability function, i.e. finite classical models akin to those in Definition \ref{pmodeldef}. More explicitly, for a classical model of the form $\M=\langle W,v,\mu\rangle$ with $W$ a set of possible worlds, $v\colon W\times\At   \rightarrow {\{0,1\}}$ a valuation, and  $\mu: \mathcal{P}(W)\rightarrow[0;1]$ a probability measure, the probability of some $\varphi$ is given as $\mu([\varphi])$, with $[\varphi]=\{s\in\Sigma\colon\M,s\vDash\varphi\}$. In fact, if $\At$ is finite, every probability assignment to $\Lprop$ can be obtained in this way. 

Moreover, every world $w$ of a possible worlds models $W$ naturally corresponds to  its atomic valuation, which can be represented by the subset $V\subseteq\At$  given by $p\in V$ iff $v(w,p)=1$ for $p\in\At$. In the same vein, each state $\sigma$ of a probabilistic model corresponds to a non-standard possible assignment $n_s \subseteq\mathcal{P}(\lit)$ defined by $p\in n_s$ iff $v^+(w,p)=1$ and $\neg p\in n_s$ iff $v^-(w,p)=1$ for $p\in At$.  
Hence, non standard probabilistic models are obtained from possible world models by replacing classical worlds, i.e. atomic valuations with BD-possible worlds, that is elements of $\mathcal{P}(\lit)$.


\section{Axioms of Non-standard probability}\label{ax}

In the following, we present a number of axioms for non-standard and  four-valued probabilities. 
The two sets of axioms given here are easily seen to be sound w.r.t. to the semantics just presented. That they are also complete will be shown in Section \ref{soundcomp}. We can hence use these axioms for a purely syntactic definition of non-standard and four-valued probabilities.

\subsubsection*{Non-standard probabilities}

We begin with axioms for single-valued non-standard probabilities, i.e. probability measures assigning each $\varphi\in\Lprop$ a unique rational number.

\begin{definition} A {\bf non-standard probability assignment} is a function $p\colon\Lprop\rightarrow\mathbb{R}$ satisfying for all $\f,\p\in\Lprop$.
\begin{center}

	\begin{tabular}{lll}
		(A1) &$0 \le p (\f) \le 1$ & (normalization)\\
		(A2)  &if $\f \vDash_{L} \p$ then $p (\f) \le p (\p)$& (monotonicity)\\
		(A3) & $p (\f \wedge \p) + p (\f \vee \p) = p (\f) + p (\p)$.& (import-export rule)	
	\end{tabular}
	\end{center}
	where $\vDash_{L}$ in (A2) is the entailment relation of Belnap-Dunn logic (first-degree entailment).		
	\end{definition}
	
These axioms are strictly weaker than the classic Kolmogorov axioms \citep{kolmogorov}. Axioms (A1)-(A3) can be derived from the Kolmogorov axioms, using that first degree entailment is a sub-relation of classical entailment. In the converse direction, however, only the non-negativity axiom ($p(\varphi)\geq 0$ for all $\varphi$) is derivable from (A1). Neither Kolmogorov's unit axiom $(p(\top) = 1)$ nor the ($\sigma$)-additivity axioms are derivable from (A1)-(A3), as is illustrated by the fact that assigning probability .5 to {\it every} formula satisfies (A1)-(A3). In fact, the import-export axiom is a weak counterpart to additivity, stating that a general rule for adding probabilities that is derivable from the Kolmogorov axioms, $p(\f\vee\p) = p(\f)+p(\p)-p(\f\wedge\p)$, continues to hold. 
Within the above axiomatization, the import-export axioms (A3) is  the only condition regulating the relation between the probability of a formula and its negation. As a result the probabilities of $\f$ and $\neg\f$ need not sum up to 1. The constraint $p(\f\lor\neg \f) + p(\f\land \neg \f) = p(\f) + p(\neg\f)$ allows for probabilistic gaps ($p(\f\lor\neg \f) < 1)$ and gluts ($p(\f\land\neg \f) > 0)$ to occur simultaneously. This squares with our original motivation of establishing  independence between positive and negative evidence. 


\subsubsection*{Four-Valued probabilities}

We now turn to four-valued probability assignments. These are characterized by a total of six axioms. 

\begin{definition} A {\bf four-valued probability assignment} is a function $\hat{p}:\Lprop\rightarrow\mathbb{R}^4$. Writing  $\hat{\f}$ as $(b_\f,d_\f,u_\f,c_\f)$, this function must satisfy
\begin{center}
	\begin{tabular}{ll}
		(D1) &$0 \leq b_\f, d_\f, u_\f, c_\f$ \\ 
		(D2) &$b_\f + d_\f + u_\f + c_\f = 1$ \\
		 (D3)& $b_{\neg\varphi}=d_\varphi$, $c_{\neg\varphi}=c_\varphi$\\
			 (D4)& if $\f \vDash_L \p$ then $b_\varphi+c_\varphi\leq b_\psi+c_\psi$\\
(D5)&   $b_{\varphi\wedge\neg\varphi}=0$, $c_{\varphi\wedge\neg\varphi}=c_\varphi$\\
		 (D6)& $b_\varphi+c_\varphi+b_\psi+c_\psi = b_{\varphi\wedge\psi}+ c_{\varphi\wedge\psi}+ b_{\varphi\vee\psi}+ c_{\varphi\vee\psi}$

	\end{tabular}
	\end{center}
	where $\vDash_{L}$ is first-degree entailment and $\f,\p\in\Lprop$		
	\end{definition}

The four entries of $\hat{p}$ stand for pure belief (i.e. $\varphi$ is true and $\neg\varphi$ is not), pure disbelief, uncertainty and conflict respectively. 	
Let us briefly explain the axioms. The first two axioms (D1) and (D2) are classicality axioms, stating that  probabilities are non-negative and that the probabilistic masses of pure belief, pure disbelief, conflict and uncertainty must add up to 1. This  reflects the intuition that the four cases are mutually exclusive and jointly exhaustive, i.e. that the metatheory of gaps and gluts is classical. 

Axioms (D3)-(D6) then represent structural relations between the four-valued assignments. (D3) emphasizes the strong relation between $\f$ and $\neg \f$: belief in one is the same as disbelief in the other, while both share the same conflict and uncertainty. (D4) is a direct counterpart of axioms (A2) above, stating that the total belief in $\varphi$ (i.e. the sum of pure belief in $\varphi$ and belief in $\varphi$ and $\neg\varphi$ together) must be monotonous under first degree entailment.  
(D5) expresses that an agent cannot have pure belief in contradictory formulas of the form $\varphi\wedge\neg\varphi$. A fortiori, the conflict about $\varphi\wedge\neg\varphi$ must be derived from (and equal to) conflict about $\varphi$ alone. 
(D6), finally, is a counterpart to the import-export axiom (A3). Briefly, it states the total beliefs (i.e. the sum of pure belief and conflict together) of $\varphi,\psi,\varphi\vee\psi$ and $\varphi\wedge\psi$ must satisfy the import-export rule. 

We should note that the axioms presented here are weaker than those put forward in \cite{DunnProb}. There, the probability of a conjunction $\f\wedge\p$ is determined by its conjuncts through: 
\begin{center}
\begin{tabular}{ll}
$b_{\f\wedge\p}=b_\f\cdot b_\p$&$d_{\f\wedge\p}=d_\f+d_\p-d_\f d_\p+c_\f u_\p+u_\f c_\p$\\
$u_{\f\wedge\p}=u_\f b_\p+b_\f u_\p+u_\f u_\p$&$c_{\f\wedge\p}=b_\f c_\p+c_\f b_\p+c_\f c_\p$
\end{tabular}
\end{center}
\noindent A similar axiom for three valued probabilities (true/false/uncertain) can be found in \cite{josang}. Notably, such definition makes conjunctions truth functional, i.e. the probability of  $\f\wedge\p$ is fully  determined by the probabilities of $\f$ and $\p$. We take this to be too strong, especially given that no such functional dependence holds  in classic probability theory. Moreover  this truth functional approach implies that all propositions are mutually probabilistically independent - precluding any interesting notions of conditionalization. To see this, assume that $\f$ and $\p$ are classical, i.e. $\hat{p}(\f)=(b_\f,d_\f,0,0)$ and  $\hat{p}(\p)=(b_\p,d_\p,0,0)$. Then the above definition simplifies to $\hat{p}(\f\wedge\p)=(b_\f b_\p,d_\f+d_\p-d_\f d_\p,0,0)$. With other words, the probability (belief) in $\f\wedge\p$ is the product of the probabilities of $\f$ and $\p$ - which exactly is the definition of probabilistic independence.

In the following section we will show a strong correspondence between non-standard and four-valued probability assignments. Thereafter, we show axiom systems (A1)-(A3) and (D1)-(D6) to be sound and complete  with respect to the class of probabilistic models defined above (Section \ref{soundcomp}). In  Section \ref{cond} we then discuss approaches to conditionalization in either setting.

\section{Correspondence between non-standard and four-valued probabilities}\label{corr}

We have so far presented two different frameworks for non-standard probability, one real-valued, the other with values in $\mathbb{R}^4$. As we show now, both are different but equivalent perspectives on the same phenomenon. To this end, let $\nsset$ and $\fvset$ be the set of non-standard and four-valued probability assignments respectively. That is, $\nsset$ is the set of functions $\Lprop\rightarrow\mathbb{R}$ satisfying (A1)-(A3) while  $\fvset$  consists of all mappings $\Lprop\rightarrow\mathbb{R}^4$ satisfying (D1)-(D6). We will show the translation map  $\tr\colon\fvset\rightarrow\nsset$ defined by
\begin{align*}
 \tr(\hat{p})(\varphi):=b_\varphi+c_\varphi\hspace{1cm}\text{ where }\hat{p}(\varphi)=(b_\f,d_\f,u_\f,c_\f)
\end{align*}
to be a bijection. In the opposite direction, the map $\trinv\colon\nsset\rightarrow\fvset$ is given by 

\begin{align*}
\trinv(p)(\varphi):= &(p(\varphi) -  p(\varphi\wedge\neg\varphi),\ p(\neg\varphi) -  p(\varphi\wedge\neg\varphi),\\&\textcolor{white}{(}1-p(\varphi)-p(\neg\varphi)+p(\varphi\wedge\neg\varphi),\ p(\varphi\wedge\neg\varphi))
\end{align*}

\noindent As expected, the maps $\tr$ and $\trinv$ are inverse to each other:
\begin{theorem}\label{translthm}
$\tr$ and $\trinv$ are well-defined. Moreover  $\trinv\circ\tr = id_{\fvset}$ and $\tr\circ\trinv=id_{\nsset}$
\end{theorem}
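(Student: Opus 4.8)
The plan is to establish the two well-definedness claims and the two composition identities in turn, leaning throughout on one preliminary fact and one bookkeeping identity. The preliminary fact is that every $p\in\nsset$ is invariant under first-degree equivalence: since $\vDash_L$ is precisely inclusion of positive extensions, whenever $\chi$ and $\chi'$ have the same positive extension in all models we get $\chi\vDash_L\chi'$ and $\chi'\vDash_L\chi$, so applying (A2) in both directions gives $p(\chi)=p(\chi')$. I will invoke this to replace $\neg\neg\f$ by $\f$ (using $|\neg\neg\f|^+=|\f|^+$), and to simplify the self-conjunction $(\f\wedge\neg\f)\wedge\neg(\f\wedge\neg\f)$: using $|\neg\chi|^+=|\chi|^-$ and the clauses for $\wedge$, its positive extension computes to $|\f|^+\cap|\f|^-=|\f\wedge\neg\f|^+$, so $p\big((\f\wedge\neg\f)\wedge\neg(\f\wedge\neg\f)\big)=p(\f\wedge\neg\f)$. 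The bookkeeping identity is that, writing $\trinv(p)(\f)=(b_\f,d_\f,u_\f,c_\f)$, the definition of $\trinv$ gives directly $b_\f+c_\f=p(\f)$ and $d_\f+c_\f=p(\neg\f)$; this is what makes (D4) and (D6) collapse onto (A2) and (A3).

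Well-definedness of $\tr$ is the easy direction. Given $\hat p\in\fvset$ I check that $\tr(\hat p)(\f)=b_\f+c_\f$ satisfies (A1)--(A3): (A1) is immediate from (D1) and (D2), since non-negativity gives the lower bound and $b_\f+c_\f\le b_\f+d_\f+u_\f+c_\f=1$ the upper; (A2) is literally (D4); and (A3) is literally (D6). So essentially no work is required beyond matching axioms.

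Well-definedness of $\trinv$ carries the content. Given $p\in\nsset$, non-negativity (D1) of the four coordinates follows from (A1)--(A3): $b_\f,d_\f\ge 0$ because $\f\wedge\neg\f\vDash_L\f$ and $\f\wedge\neg\f\vDash_L\neg\f$ yield $p(\f\wedge\neg\f)\le p(\f),p(\neg\f)$ through (A2); $u_\f\ge 0$ rewrites via (A3) as $1-p(\f\vee\neg\f)\ge 0$; and $c_\f\ge 0$ is (A1). Normalization (D2) is a direct cancellation. Monotonicity (D4) and import--export (D6) reduce, via $b_\f+c_\f=p(\f)$, to (A2) and (A3). The remaining two axioms are where the preliminary fact is needed: (D3) uses $|\neg\neg\f|^+=|\f|^+$ to obtain $b_{\neg\f}=p(\neg\f)-p(\f\wedge\neg\f)=d_\f$ and $c_{\neg\f}=c_\f$, while (D5) uses the self-conjunction computation to obtain $b_{\f\wedge\neg\f}=0$ and $c_{\f\wedge\neg\f}=c_\f$.

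Finally the compositions. $\tr\circ\trinv=id_{\nsset}$ is immediate from the bookkeeping identity, since $\tr(\trinv(p))(\f)=b_\f+c_\f=p(\f)$. For $\trinv\circ\tr=id_{\fvset}$, write $p=\tr(\hat p)$, so $p(\chi)=b_\chi+c_\chi$; then (D5) gives $p(\f\wedge\neg\f)=b_{\f\wedge\neg\f}+c_{\f\wedge\neg\f}=c_\f$, and substituting this into the four coordinates of $\trinv(p)(\f)$ recovers $(b_\f,d_\f,u_\f,c_\f)$, using (D3) for the disbelief coordinate and (D2) for the uncertainty coordinate. I expect the only genuine obstacle to be the Belnap--Dunn equivalence computations underpinning (D3) and especially (D5): the point to keep in mind is that $p$ sees only positive extensions, so each equivalence must be verified at the level of $|\cdot|^+$ (not as a classical tautology), which is exactly why the naive step ``$(\f\wedge\neg\f)\wedge\neg(\f\wedge\neg\f)$ is a contradiction'' has to be replaced by the explicit positive-extension calculation above.
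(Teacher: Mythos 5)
Your proposal is correct and follows essentially the same route as the paper's own proof: the same axiom-matching for well-definedness of $\tr$, the same entailments $\f\wedge\neg\f\vDash_L\f,\neg\f$ and equivalences $\f\Dashv\vDash_L\neg\neg\f$ and $\f\wedge\neg\f\Dashv\vDash_L(\f\wedge\neg\f)\wedge\neg(\f\wedge\neg\f)$ for (D1), (D3) and (D5), the identity $b_\f+c_\f=p(\f)$ for (D4)/(D6) and for $\tr\circ\trinv=id$, and the evaluation of $\tr(\hat p)$ at $\f$, $\neg\f$ and $\f\wedge\neg\f$ via (D3) and (D5) for the converse composition. The only difference is that you verify the Belnap--Dunn equivalences explicitly at the level of positive extensions, which the paper asserts without computation.
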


\noindent Moreover,  the translation maps $\tr$ and $\trinv$ cohere with the way we defined non-standard and four-valued assignments on a given probabilistic model. 
\begin{theorem}\label{commtriang}
Let $\mathcal{M}=\langle\Sigma, \mu, v^+, v^-\rangle$ be a probabilistic model and $p_\mu$ and $\hat{p}_\mu$ the induced non-standard and four-valued probability functions. Then $\tr\circ\hat{p}_\mu=p_\mu$ and  $\trinv\circ p_\mu=\hat{p}_\mu$.
\end{theorem}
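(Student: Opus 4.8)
The plan is to prove Theorem~\ref{commtriang} by unwinding the definitions of the induced probability functions and matching them componentwise, making heavy use of the partition structure of the state space $\Sigma$. First I would recall that for any formula $\f$, the four sets $|\f|^b_\M$, $|\f|^d_\M$, $|\f|^c_\M$ and $|\f|^u_\M$ form a partition of $\Sigma$: by their very definitions as $|\f|^+\setminus|\f|^-$, $|\f|^-\setminus|\f|^+$, $|\f|^+\cap|\f|^-$ and $\Sigma\setminus(|\f|^+\cup|\f|^-)$, every state lies in exactly one of them. Since $\mu$ is a genuine (finitely additive) probability measure on the full subset algebra of $\Sigma$, this gives the two basic additivity facts I will lean on: $\mu(|\f|^+_\M)=\mu(|\f|^b_\M)+\mu(|\f|^c_\M)$, because $|\f|^+_\M$ is the disjoint union of $|\f|^b_\M$ and $|\f|^c_\M$; and similarly $\mu(|\f|^-_\M)=\mu(|\f|^d_\M)+\mu(|\f|^c_\M)$.

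Next I would establish the first identity $\tr\circ\hat{p}_\mu=p_\mu$. By definition $\hat{p}_\mu(\f)=(\mu(|\f|^b),\mu(|\f|^d),\mu(|\f|^u),\mu(|\f|^c))$, so applying $\tr$, which sends a four-tuple $(b_\f,d_\f,u_\f,c_\f)$ to $b_\f+c_\f$, yields $\mu(|\f|^b_\M)+\mu(|\f|^c_\M)$. By the additivity fact just noted this equals $\mu(|\f|^+_\M)$, which is exactly $p_\mu(\f)$. This direction is essentially immediate once the partition observation is in place.

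For the second identity $\trinv\circ p_\mu=\hat{p}_\mu$, I would verify each of the four coordinates separately. The key auxiliary computation is the value of $p_\mu(\f\wedge\neg\f)$: since $|\f\wedge\neg\f|^+_\M = |\f|^+_\M\cap|\neg\f|^+_\M = |\f|^+_\M\cap|\f|^-_\M = |\f|^c_\M$ (using the semantic clauses for $\wedge$ and $\neg$ together with $|\neg\f|^+=|\f|^-$), we get $p_\mu(\f\wedge\neg\f)=\mu(|\f|^c_\M)$. With this in hand, the first coordinate of $\trinv(p_\mu)(\f)$ is $p_\mu(\f)-p_\mu(\f\wedge\neg\f)=\mu(|\f|^+)-\mu(|\f|^c)=\mu(|\f|^b)$; the second is $p_\mu(\neg\f)-p_\mu(\f\wedge\neg\f)=\mu(|\f|^-)-\mu(|\f|^c)=\mu(|\f|^d)$, using $p_\mu(\neg\f)=\mu(|\neg\f|^+)=\mu(|\f|^-)$; the fourth is $p_\mu(\f\wedge\neg\f)=\mu(|\f|^c)$ directly; and the third is $1-p_\mu(\f)-p_\mu(\neg\f)+p_\mu(\f\wedge\neg\f)=\mu(\Sigma)-\mu(|\f|^+)-\mu(|\f|^-)+\mu(|\f|^c)=\mu(|\f|^u)$ by inclusion--exclusion on the partition, noting $\mu(\Sigma)=1$. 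Thus all four coordinates agree with $\hat{p}_\mu(\f)$.

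The computations here are routine; the only mild subtlety, and the step I would treat most carefully, is the identification $|\f\wedge\neg\f|^+_\M=|\f|^c_\M$, since everything downstream depends on correctly translating the syntactic operation $\f\wedge\neg\f$ into the set-theoretic intersection $|\f|^+\cap|\f|^-$ via the negation clause. I would also note that, strictly speaking, the well-definedness of $\tr$ and $\trinv$ (that their outputs land in $\nsset$ and $\fvset$) is already secured by Theorem~\ref{translthm}, so no additional checking is required on that front; and since by construction $\tr$ and $\trinv$ are mutually inverse, proving either one of the two identities of the present theorem would in fact yield the other by composition, but I would present both explicitly for transparency.
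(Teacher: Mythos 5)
Your proof is correct. For the first identity it is exactly the paper's argument: decompose $|\f|^+_\M$ as the disjoint union of $|\f|^b_\M$ and $|\f|^c_\M$, so that $\tr(\hat{p}_\mu)(\f)=\mu(|\f|^b_\M)+\mu(|\f|^c_\M)=\mu(|\f|^+_\M)=p_\mu(\f)$. For the second identity you diverge slightly: the paper simply composes the first identity with $\trinv$ and invokes $\trinv\circ\tr=id_{\fvset}$ from Theorem~\ref{translthm}, a two-line derivation, whereas you verify all four coordinates of $\trinv(p_\mu)(\f)$ directly against $\hat{p}_\mu(\f)$, with the semantic identification $|\f\wedge\neg\f|^+_\M=|\f|^+_\M\cap|\f|^-_\M=|\f|^c_\M$ as the pivot. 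Both work; the paper's route is shorter but leans on the already-established inverse property, while yours is self-contained at the level of the model $\M$ and makes explicit \emph{why} the translation formulas are the right ones semantically (each coordinate of $\trinv$ literally computes the measure of the corresponding cell of the partition $\{|\f|^b,|\f|^d,|\f|^u,|\f|^c\}$ by inclusion--exclusion). You correctly note at the end that either identity yields the other by composition, so you are aware of the paper's shortcut; presenting the direct computation anyway is a legitimate stylistic choice with some expository value.
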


\noindent The remainder of this section is devoted to showing these two results. 

\begin{proof}[Proof of Theorem \ref{translthm}]
To see that $\tr$ is well defined let $p'= \tr(\hat{p})$ for a fixed $\hat{p}\in\fvset$. First,  note that for any $\p\in\Lprop$ with $\hat{p}(\p)=(b_\p,d_\p,u_\p,c_\p)$ we have $0\leq b_\p+c_\p\leq 1$, showing that $p'$ satisfies (A1). To see that $p'$ satisfies (A2) assume that $\varphi\vDash_L\psi$. By   (D4), we have that $b_\varphi+c_\varphi\leq b_\psi+c_\psi$ and hence $p'(\varphi)\leq p' (\psi)$. For (A3) finally, note that by (D6) we have  for any $\varphi,\psi\in \Lprop$ that $b_\varphi+c_\varphi+b_\psi+c_\psi = b_{\varphi\wedge\psi}+ c_{\varphi\wedge\psi}+ b_{\varphi\vee\psi}+ c_{\varphi\vee\psi}$ which immediately implies that $p'(\varphi) +p'(\psi) = p'(\varphi\wedge\psi) + p'(\varphi\vee\psi)$.

Next, we show that also $\trinv$ is well defined. For this fix $p\in\nsset$. For $\p\in\Lprop$ denote $\tr(p)(\p)$ by $(b_\p,d_\p,u_\p,c_\p)$. Using this notation, we obtain
\begin{align*}
b_\p+d_\p+u_\p+c_\p=
&\textcolor{white}{+}p(\p) -  p(\p\wedge\neg\p)+p(\neg\p) -  p(\p\wedge\neg\p)\\&+1-p(\p)-p(\neg\p)+p(\p\wedge\neg\p)+p(\p\wedge\neg\p)
\end{align*}
the latter term is easily seen to equal 1, showing (D2). For (D1) note that $\p\wedge\neg\p\vDash_L \p$ and $\p\wedge\neg \p\vDash_L \neg \p$. By (A2), we have that $p(\p\wedge\neg \p)\leq p(\p),p(\neg\p)$ which, together with (A1) implies that $b_\p,d_\p,c_\p\geq 0$. Finally, by (A1) and (A3), $$1-  p(\p)-p(\neg\p)+p(\p\wedge\neg\p) \geq p(\p\vee\neg\p) -  p(\p)-p(\neg\p)+p(\p\wedge\neg\p)=0$$ and hence $u_\p\geq 0$. The first half of (D3) follows from the fact that $b_\p=p(\p) -  p(\p\wedge\neg\p)=d_{\neg\p}$, using that $\p\Dashv\vDash_L\neg\neg\p$ and hence, by (A2), $p(\p)=p(\neg\neg\p)$. The second half follows from the fact that $\p\wedge\neg\p\Dashv\vDash_L\neg\p\wedge\neg\neg\p$ and hence, by (A2),  $p(\p\wedge\neg\p)=p(\neg\p\wedge\neg\neg\p)$. 
Similarly, (D4) can be derived from (A2) together with the fact that $b_\p+c_\p=p(\p) -  p(\p\wedge\neg\p)+ p(\p\wedge\neg\p)=p(\p)$. Using the latter fact again, (D6) is an immediate consequence of (A3). For (D5), finally, note that $\p\wedge\neg \p \Dashv\vDash_L \p\wedge\neg \p\wedge\neg(\p\wedge\neg \p)$ and hence, by (A2), $p(\p\wedge\neg\p)=p(\p\wedge\neg \p\wedge\neg(\p\wedge\neg \p))$. This implies that $c_{\p\wedge\neg \p}=c_\p$ and that  $b_{\p\wedge\neg\p}=p(\p\wedge\neg\p)-p(\p\wedge\neg \p\wedge\neg(\p\wedge \neg\p))=0$.

Finally, we show that $\trinv\circ\tr = id_{\fvset}$ and $\tr\circ\trinv=id_{\nsset}$, i.e. that $\trinv$ and $\tr$ are left and right inverses of each other. We begin by showing that $\tr(\trinv(p))=p$ for any $p\in\nsset$. For $\varphi\in \Lprop$, we have that $\trinv(p)(\varphi)$ equals $$  \left(p(\varphi) -  p(\varphi\wedge\neg\varphi),p(\neg\varphi) -  p(\varphi\wedge\neg\varphi),1-p(\varphi)-p(\neg\varphi)+p(\varphi\wedge\neg\varphi),p(\varphi\wedge\neg\varphi)\right).$$
Hence $\tr(\trinv(p))(\varphi)=p(\varphi) -  p(\varphi\wedge\neg\varphi)+p(\varphi\wedge\neg\varphi)=p(\varphi)$ as desired.

For the converse direction, let $\hat{p}\in\fvset$. We have to show that $\trinv(\tr(\hat{p}))=\hat{p}$. For this, let $\varphi\in \Lprop$ and denote $\hat{p}(\p)$ by $(b_\p,d_\p,c_\p,u_\p)$ for any $\p\in\Lprop$. By axioms (D3) and (D5) we have that $b_{\neg\varphi}=d_\varphi$, $c_{\neg\varphi}=c_\varphi$, $b_{\varphi\wedge\neg\varphi}=0$ and $c_{\varphi\wedge\neg\varphi}=c_\varphi$. Hence, the values of $\tr(\hat{p})(\varphi),\ \tr(\hat{p})(\neg\varphi)$ and $\tr(\hat{p})(\varphi\wedge\neg\varphi)$ are $b_\varphi+c_\varphi,\ d_\varphi+c_\varphi$ and $c_\varphi$ respectively. We then get that 
 \begin{align*}
&\trinv(\tr(\hat{p}))(\varphi)\\ =& (\tr(\hat{p})(\varphi) -  \tr(\hat{p})(\varphi\wedge\neg\varphi),\tr(\hat{p})(\neg\varphi) -  \tr(\hat{p})(\varphi\wedge\neg\varphi),\\&\phantom{(}\tr(\hat{p})(\varphi\wedge\neg\varphi),1-\tr(\hat{p})(\varphi)-\tr(\hat{p})(\neg\varphi)+\tr(\hat{p})(\varphi\wedge\neg\varphi))\\
=&(b_\varphi+c_\varphi-c_\varphi,d_\varphi+c_\varphi-c_\varphi,c_\varphi,1-(b_\varphi+c_\varphi)-(d_\varphi+c_\varphi)+c_\varphi)\\
=&(b_\varphi,d_\varphi,c_\varphi,1-b_\varphi-d_\varphi-c_\varphi)=(b_\varphi,d_\varphi,c_\varphi,u_\varphi)
\end{align*}
where the last equation employs  (D2). Hence $\tr(\trinv(\hat{p}))=\hat{p}$ as desired.
\end{proof}

\begin{proof}[Proof of Theorem \ref{commtriang}]

For $\varphi\in\Lprop$ denote $\hat{p}_\mu(\f)$ by $(b_\f,d_\f,u_\f,c_\f)$. By Definition \ref{pmodeldef}, we have 
\begin{align*}
b_\f&=\mu(|\f|_\Mod^b)=\mu(|\f|_\Mod^+\setminus|\f|_\Mod^-)\\ 
c_\f&=\mu(|\f|_\Mod^c)=\mu(|\f|_\Mod^+\cap|\f|_\Mod^- )
\end{align*}
Hence, \begin{align*}\tr(\hat{p}_\mu)(\f)&=b_\f+c_\f=\mu(|\f|_\Mod^b)+\mu(|\f|_\Mod^c)\\&=\mu(|\f|_\Mod^+\setminus|\f|_\Mod^-)+\mu(|\f|_\Mod^+\cap|\f|_\Mod^- )=\mu(|\f|_\Mod^+)\end{align*} By definition, the latter term is exactly $p_\mu(\varphi)$. Thus $\tr\circ\hat{p}_\mu=p_\mu$, as desired. 

Moreover, the latter formula implies that  $\trinv\circ\tr\circ\hat{p}_\mu=\trinv\circ p_\mu$. By Theorem \ref{translthm}, we have have $\trinv\circ\tr=id_{\fvset}$. Hence, the last equation reduces to  $\hat{p}_\mu=\trinv\circ p_\mu$, proving the second part of the theorem. 
\end{proof}

\section{A Completeness Result.}\label{soundcomp}

Having shown that non-standard and four-valued probability assignments are equivalent, as witnessed by the bijection $\tr\colon\fvset\rightarrow\nsset$, we now turn our attention to the class of probability functions that are induced by probabilistic models. As it turns out, these are fully characterized by our axioms (A1)-(A3). More specifically, we will show that axioms (A1)-(A3) are a sound and complete characterization of the induced non-standard probability functions of probabilistic models. Of course, by Theorems \ref{translthm} and \ref{commtriang}, this implies that also (D1)-(D6) are a sound and complete characterization of the induced four-valued probability functions of probabilistic models. In fact, the soundness part is easy to check:

\begin{lemma}\label{sclem}
Let  $\mathcal{M}=\langle\Sigma, \mu, v^+, v^-\rangle$ be a probabilistic model and $p_\mu$ the induced non-standard probability function. Then $p_\mu$ satisfies (A1)-(A3).
\end{lemma}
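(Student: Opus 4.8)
The plan is to verify each of the three axioms (A1)--(A3) directly from the definition $p_\mu(\varphi)=\mu(|\varphi|_\M^+)$, using only elementary properties of the classical probability measure $\mu$ together with the semantic characterization of first-degree entailment. The key observation throughout is that all the non-classicality lives in the valuations, while $\mu$ itself is a perfectly ordinary measure on the subset algebra of $\Sigma$; so each axiom will reduce to a standard fact about $\mu$ applied to the positive extensions $|\varphi|_\M^+\subseteq\Sigma$.

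For (A1), normalization, I would simply note that $|\varphi|_\M^+$ is a subset of $\Sigma$, so $0\le\mu(|\varphi|_\M^+)\le\mu(\Sigma)=1$ by non-negativity and monotonicity of $\mu$. For (A2), monotonicity, the main step is to translate the syntactic entailment $\varphi\vDash_L\psi$ into a set inclusion. By the definition of first-degree entailment and the semantics of $\models^+$, whenever $\varphi\vDash_L\psi$ we have $\M,s\models^+\varphi\Rightarrow\M,s\models^+\psi$ for every state $s$, which is exactly the inclusion $|\varphi|_\M^+\subseteq|\psi|_\M^+$. Monotonicity of $\mu$ then yields $p_\mu(\varphi)\le p_\mu(\psi)$.

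For (A3), the import-export rule, I would use the fact that the map $\varphi\mapsto|\varphi|_\M^+$ commutes with the Boolean operations on extensions: from the clauses for $\models^+$ one reads off $|\varphi\wedge\psi|_\M^+=|\varphi|_\M^+\cap|\psi|_\M^+$, and since disjunction is defined in the standard way (De Morgan via negation), $|\varphi\vee\psi|_\M^+=|\varphi|_\M^+\cup|\psi|_\M^+$. The axiom then follows from the inclusion-exclusion identity for a finitely additive measure, namely $\mu(X\cap Y)+\mu(X\cup Y)=\mu(X)+\mu(Y)$ applied to $X=|\varphi|_\M^+$ and $Y=|\psi|_\M^+$; rearranged, this is precisely $p_\mu(\varphi\wedge\psi)+p_\mu(\varphi\vee\psi)=p_\mu(\varphi)+p_\mu(\psi)$.

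None of the three steps presents a genuine obstacle, which matches the paper's own remark that soundness is easy to check. The only point requiring a little care is the semantic bookkeeping in (A3): I must confirm that the positive extension of a disjunction is the union of the positive extensions, since $\vee$ is a defined rather than primitive connective. This is a short computation using $|\neg\varphi|_\M^+=|\varphi|_\M^-$ together with the $\models^-$ clause for conjunction, or alternatively one can just observe that the $\models^+$ behavior of $\vee$ mirrors classical disjunction on the positive valuation. Once that identity is in hand, the rest is routine measure theory.
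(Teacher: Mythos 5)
Your proposal is correct and is precisely the routine verification the paper has in mind when it declares soundness ``easy to check'' and omits the argument: (A1) from $|\f|^+\subseteq\Sigma$, (A2) from the translation of $\vDash_L$ into the inclusion $|\f|^+\subseteq|\p|^+$, and (A3) from $|\f\wedge\p|^+=|\f|^+\cap|\p|^+$, $|\f\vee\p|^+=|\f|^+\cup|\p|^+$ plus inclusion--exclusion for the finitely additive measure $\mu$. You rightly flag the only point needing care, namely that the positive extension of the defined connective $\vee$ is the union of the positive extensions, which follows from $|\neg\f|^+=|\f|^-$ and the $\models^-$ clause for conjunction exactly as you describe.
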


Towards completeness, we will show a stronger result. Recall that {\it completeness} expresses that every $p\in\nsset$ is the induced non-standard probability function of {\it some} probabilistic model $\Mod$. This $\Mod$ may, however, not be unique as $p$ may be not expressive enough to completely determine all properties of $\Mod$. As we will show $\Mod$, is almost unique. More specifically, we determine a class $\Canm$ of canonical models such that every $p\in\nsset$ is the induced non-standard probability function of {\it exactly one} $\Mod\in\Canm$. 

\begin{definition}
$i)$ We call a probabilistic model $\mathcal{M}=\langle\Sigma, \mu, v^+, v^-\rangle$ {\bf canonical} iff $\Sigma=\mathcal{P}(\lit)$ and $v^+,v^-$ satisfy
\begin{align*}
v^+(p)=\{\sigma\in\mathcal{P}(\lit)\mid p\in\sigma\}\hspace{1cm}v^-(p)=\{\sigma\in\mathcal{P}(\lit)\mid \neg p\in\sigma\}
\end{align*}
$ii)$ $\mathbf{\Canm}$ is the set of canonical probabilistic models. 
\end{definition}

{\it Remark: }
The set $\Canm$ is representative of the set of all models in the following sense: For any probabilistic model $\M=\langle\Sigma, \mu, v^+, v^-\rangle$, there is a unique canonical model ${\M}_c=\langle\mathcal{P}(\lit), \mu_c, v^+_c, v^-_c\rangle$ and a unique function $f\colon\M\rightarrow{\M}_c$ such that $x\in v^{\pm}(p)\Leftrightarrow f(x)\in v_{c}^{\pm}(p)$  and $\mu_c(\sigma_c)=\mu(f^{-1}(\sigma_c))$ for all $\sigma_c\in\mathcal{P}(\lit)$. In particular, $p_\mu(\f)=p_{\mu_c}(\f)$ for all $\f\in\Lprop$. The main theorem of this section is:

\begin{theorem}\label{repthm}
For any $p\in\nsset$ there is a unique canonical model $\mathcal{M}_p=\langle\mathcal{P}(\lit), \mu, v^+, v^-\rangle$ with induced non-standard probability function $p_\mu$ such that $p=p_\mu$.
\end{theorem}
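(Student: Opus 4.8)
The plan is to construct the measure $\mu$ on the finite set $\mathcal{P}(\lit)$ by Möbius inversion from the values of $p$ on conjunctions of literals, and then to read off both existence and uniqueness from that inversion. Since a canonical model is completely determined by its measure (the carrier $\mathcal{P}(\lit)$ and the valuations $v^+,v^-$ being fixed) and since $\lit$ is finite, everything reduces to pinning down the numbers $\mu(\{\sigma\})$ for $\sigma\in\mathcal{P}(\lit)$. The semantic backbone is the observation that for a set of literals $S\subseteq\lit$ one has $\big|\bigwedge_{\ell\in S}\ell\big|^+ = U_S$, where $U_S:=\{\tau\in\mathcal{P}(\lit)\mid S\subseteq\tau\}$ is the principal up-set of $S$; more generally, passing a formula to disjunctive normal form (Theorem \ref{normform}) and using that $|\cdot|^+$ sends $\wedge,\vee$ to $\cap,\cup$, every positive extension $|\f|^+$ is a finite union of such up-sets. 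I also record that the canonical model is universal for first-degree entailment: as $\sigma$ ranges over $\mathcal{P}(\lit)$ the induced atomic double valuations range over all of them, so $|\f|^+\subseteq|\p|^+$ holds in the canonical model iff $\f\vDash_L\p$.

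First I would define a valuation on up-sets by $\nu(|\f|^+):=p(\f)$, together with the conventions $\nu(\Sigma):=1$ and $\nu(\emptyset):=0$ (matching the empty conjunction $\mathbf t$ and empty disjunction $\mathbf f$). This is well defined by universality and (A2): if $|\f|^+=|\p|^+$ then $\f$ and $\p$ are interderivable, so $p(\f)=p(\p)$. By (A2) the valuation $\nu$ is monotone, and by the import-export axiom (A3) it is modular, $\nu(A\cap B)+\nu(A\cup B)=\nu(A)+\nu(B)$; appending the value $1$ at $\Sigma$ preserves monotonicity and modularity. Since the up-sets of $(\mathcal{P}(\lit),\subseteq)$ form a distributive lattice, modularity propagates to the full inclusion-exclusion identity $\nu(\bigcup_i A_i)=\sum_{\emptyset\neq I}(-1)^{|I|+1}\nu(\bigcap_{i\in I}A_i)$, which powers the rest of the argument (note $U_A\cap U_B=U_{A\cup B}$).

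Then I would define the candidate measure on singletons by
\[
\mu(\{\sigma\}) \;:=\; \nu(U_\sigma)-\nu\Big(\textstyle\bigcup_{\ell\in\lit\setminus\sigma}U_{\sigma\cup\{\ell\}}\Big),
\]
which is forced because set-theoretically $\{\sigma\}=U_\sigma\setminus\bigcup_{\ell\notin\sigma}U_{\sigma\cup\{\ell\}}$. Non-negativity — the only genuinely non-formal point — is then immediate: the subtracted up-set is contained in $U_\sigma$, so monotonicity of $\nu$ gives $\mu(\{\sigma\})\geq 0$. Expanding the second term by inclusion-exclusion and using $U_A\cap U_B=U_{A\cup B}$ turns this definition into the Möbius formula $\mu(\{\sigma\})=\sum_{R\subseteq\lit\setminus\sigma}(-1)^{|R|}\,p\big(\bigwedge_{\ell\in\sigma\cup R}\ell\big)$, whose zeta transform gives $\sum_{\tau\supseteq S}\mu(\{\tau\})=\nu(U_S)$ for every $S$; the instance $S=\emptyset$ yields $\sum_\sigma\mu(\{\sigma\})=\nu(\Sigma)=1$, so $\mu$ is a genuine probability measure.

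Finally, to see that $\mu$ represents $p$, I would check $\mu(U)=\nu(U)$ for every up-set $U$: writing $U=\bigcup_i U_{S_i}$ and applying inclusion-exclusion to the measure $\mu$ and to the valuation $\nu$ in parallel (both reduce to the common values $\mu(U_T)=\nu(U_T)$) gives equality. Hence for any $\f$, using DNF, $p_\mu(\f)=\mu(|\f|^+)=\nu(|\f|^+)=p(\f)$, so $p_\mu=p$. Uniqueness follows from the same inversion: any measure $\mu'$ with $p_{\mu'}=p$ must satisfy $\mu'(U_S)=p(\bigwedge_{\ell\in S}\ell)$ for all $S$ and $\mu'(\Sigma)=1$, and Möbius inversion over the finite lattice $(\mathcal{P}(\lit),\subseteq)$ determines $\mu'(\{\sigma\})$ from exactly these data, forcing $\mu'=\mu$. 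The main obstacle is packaging the axioms into the statement that $\nu$ is a monotone modular valuation and then correctly handling the boundary conventions for $\mathbf t,\mathbf f$ (i.e.\ $\Sigma$ and $\emptyset$); once that is in place, non-negativity and the representation are short.
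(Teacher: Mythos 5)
Your proof is correct and follows essentially the same route as the paper: your M\"obius formula $\mu(\{\sigma\})=\sum_{R\subseteq\lit\setminus\sigma}(-1)^{|R|}\,p\bigl(\bigwedge_{\ell\in\sigma\cup R}\ell\bigr)$ is exactly the closed form of the paper's recursive definition $W(x)=p(\bigwedge_{q\in x}q)-\sum_{y\supset x}W(y)$, with the same use of (A2) for non-negativity, (A1) for total mass one, and DNF together with the (A3)-driven inclusion-exclusion to extend the representation from conjunctions of literals to all formulas. The only substantive addition is that you spell out the uniqueness argument explicitly via the inversion, whereas the paper leaves it implicit in the forced nature of the construction.
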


\begin{corollary}\label{sccor}
Axioms (A1)-(A3) are sound and complete with respect to the class of induced non-standard probability functions of probabilistic models. 
\end{corollary}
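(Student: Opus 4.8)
The plan is to observe that Corollary \ref{sccor} is a direct repackaging of the soundness Lemma \ref{sclem} together with the existence half of the representation Theorem \ref{repthm}, so that no further computation is required. First I would make precise what the phrase ``sound and complete with respect to the class of induced non-standard probability functions'' unwinds to in set-theoretic terms. Let $\mathcal{F}$ denote the class of all functions $p_\mu\colon\Lprop\to\mathbb{R}$ that arise as the induced non-standard probability function of some probabilistic model $\mathcal{M}=\langle\Sigma,\mu,v^+,v^-\rangle$. Soundness of (A1)-(A3) relative to $\mathcal{F}$ asserts that every member of $\mathcal{F}$ satisfies the axioms, i.e.\ $\mathcal{F}\subseteq\nsset$; completeness asserts the converse inclusion, that every function satisfying the axioms is realized by some model, i.e.\ $\nsset\subseteq\mathcal{F}$. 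The corollary is thus equivalent to the single identity $\mathcal{F}=\nsset$.

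Next I would discharge each inclusion by citing the results already in hand. The inclusion $\mathcal{F}\subseteq\nsset$ is precisely Lemma \ref{sclem}, which states that the induced function $p_\mu$ of any probabilistic model satisfies (A1)-(A3); since $\nsset$ was defined as the set of all functions satisfying exactly these axioms, this direction is immediate. For the reverse inclusion $\nsset\subseteq\mathcal{F}$, fix any $p\in\nsset$. The existence portion of Theorem \ref{repthm} supplies a canonical model $\mathcal{M}_p=\langle\mathcal{P}(\lit),\mu,v^+,v^-\rangle$ whose induced non-standard probability function equals $p$. As every canonical model is in particular a probabilistic model in the sense of Definition \ref{pmodeldef}, we have $p=p_\mu\in\mathcal{F}$. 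Combining the two inclusions yields $\mathcal{F}=\nsset$, which is exactly the assertion of the corollary.

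I do not expect any genuine obstacle at this stage: all of the mathematical content has already been absorbed into Lemma \ref{sclem} (soundness) and Theorem \ref{repthm} (completeness, via existence), and the corollary merely records that these two statements together pin down the class of model-induced non-standard probabilities exactly. The only points demanding minor care are the translation of the logician's ``sound and complete'' terminology into the claimed equality of function classes, and the remark that the uniqueness clause of Theorem \ref{repthm} is not needed here: existence alone secures completeness, whereas uniqueness is additional information about the witnessing canonical model rather than part of the soundness/completeness claim itself.
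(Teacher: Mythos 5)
Your proof is correct and takes essentially the same route the paper intends: the corollary is left without an explicit proof there precisely because it follows, as you argue, from Lemma \ref{sclem} (soundness, i.e.\ $\mathcal{F}\subseteq\nsset$) together with the existence half of Theorem \ref{repthm} (completeness, i.e.\ $\nsset\subseteq\mathcal{F}$). Your remark that the uniqueness clause of Theorem \ref{repthm} is not needed for completeness is also accurate.
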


By Theorems \ref{translthm} and \ref{commtriang}, the previous result readily translates to the level of four-valued probability functions. 

\begin{theorem}\label{repthmfour}
For any $\hat{p}\in\fvset$ there is a unique canonical model $\mathcal{M}_{\hat{p}}=\langle\mathcal{P}(\lit), \mu, v^+, v^-\rangle$ with induced four-valued probability function $\hat{p}_\mu$ such that $\hat{p}=\hat{p}_\mu$.
\end{theorem}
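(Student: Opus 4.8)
The plan is to deduce this from the non-standard representation result (Theorem~\ref{repthm}) by transporting it along the bijection $\tr\colon\fvset\rightarrow\nsset$ established in Theorem~\ref{translthm}. All the genuine model-building work has already been carried out for the single-valued case; what remains is a short diagram chase exploiting that $\tr$ and $\trinv$ are mutually inverse (Theorem~\ref{translthm}) and that they commute with the passage from a probabilistic model to its induced probability functions (Theorem~\ref{commtriang}).

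For existence, given $\hat{p}\in\fvset$ I would first set $p:=\tr(\hat{p})$, which lies in $\nsset$ because $\tr$ is well defined. Applying Theorem~\ref{repthm} to $p$ yields a canonical model $\M_p=\langle\mathcal{P}(\lit),\mu,v^+,v^-\rangle$ whose induced non-standard probability function $p_\mu$ equals $p$. Taking $\M_{\hat{p}}:=\M_p$, its induced four-valued probability function is, by Theorem~\ref{commtriang}, equal to $\trinv\circ p_\mu=\trinv(p)=\trinv(\tr(\hat{p}))$, and this equals $\hat{p}$ since $\trinv\circ\tr=id_{\fvset}$ by Theorem~\ref{translthm}. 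Hence $\hat{p}_\mu=\hat{p}$, as required.

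For uniqueness, suppose $\M'$ is any canonical model, with probability measure $\mu'$, whose induced four-valued probability function $\hat{p}_{\mu'}$ equals $\hat{p}$. By Theorem~\ref{commtriang} its induced non-standard probability function satisfies $p_{\mu'}=\tr\circ\hat{p}_{\mu'}=\tr(\hat{p})=p$. The uniqueness clause of Theorem~\ref{repthm} then forces $\M'=\M_p=\M_{\hat{p}}$, which establishes uniqueness.

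I do not anticipate a serious obstacle, as the substantive content is entirely located in Theorem~\ref{repthm} and the present claim is a formal transfer along an established correspondence of representations. The only point requiring care is to invoke Theorem~\ref{commtriang} in the correct direction at each stage—using $\trinv\circ p_\mu=\hat{p}_\mu$ for existence and $\tr\circ\hat{p}_\mu=p_\mu$ for uniqueness—so that the relevant induced function of the canonical model is correctly linked to $\hat{p}$ and to $p$ respectively.
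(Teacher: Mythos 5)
Your proposal is correct and matches the paper's intended argument exactly: the paper derives Theorem~\ref{repthmfour} from Theorem~\ref{repthm} precisely ``by Theorems~\ref{translthm} and \ref{commtriang},'' i.e.\ by transporting existence and uniqueness along the bijection $\tr$/$\trinv$ and its compatibility with the induced probability functions. Your care in invoking $\trinv\circ p_\mu=\hat{p}_\mu$ for existence and $\tr\circ\hat{p}_\mu=p_\mu$ for uniqueness is exactly the right bookkeeping.
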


\begin{corollary}
Axioms (D1)-(D6) are sound and complete with respect to the class of induced four-valued probability functions of probabilistic models. 
\end{corollary}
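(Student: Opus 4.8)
The final Corollary I would obtain from the representation Theorem~\ref{repthmfour}, which in turn I would deduce from the main Theorem~\ref{repthm} together with the translation machinery. Given $\hat p\in\fvset$, the map $\tr$ sends it to $p:=\tr(\hat p)\in\nsset$ (Theorem~\ref{translthm}); applying Theorem~\ref{repthm} yields a canonical model $\M$ with $p_\mu=p$; and by Theorem~\ref{commtriang} the induced four-valued function satisfies $\hat p_\mu=\trinv\circ p_\mu=\trinv(p)=\hat p$, with uniqueness inherited from the uniqueness in Theorem~\ref{repthm}. Soundness is Lemma~\ref{sclem} transported through $\trinv$. So the genuine work lies in Theorem~\ref{repthm}, and I would concentrate the argument there.

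The key simplification is that a canonical model fixes $\Sigma=\mathcal{P}(\lit)$ and both valuations, so the only free parameter is $\mu$, which (as $\At$, hence $\lit$ and $\mathcal{P}(\lit)$, is finite) is just a weight vector $\sigma\mapsto\mu(\{\sigma\})$ on the finite state set, with $p_\mu(\f)=\sum_{\sigma\in|\f|^+}\mu(\{\sigma\})$. Two structural facts drive everything. Writing $C_\sigma:=\bigwedge_{\ell\in\sigma}\ell$ for the conjunction of the literals in a state $\sigma$, one checks directly from the semantics that $|C_\sigma|^+=U_\sigma:=\{\tau\in\mathcal{P}(\lit)\mid\sigma\subseteq\tau\}$, the principal up-set of $\sigma$. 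More generally every $|\f|^+$ is upward closed (pushing negations onto literals makes positive satisfaction monotone in $\sigma$) and, crucially, never contains the empty state, since $\emptyset\models^+\f$ fails for every $\f$. Thus the all-gap state is invisible to the language and its mass must be recovered as a residual.

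For Theorem~\ref{repthm} I would \emph{define} $\mu$ by Möbius inversion over the Boolean lattice $\mathcal{P}(\lit)$ of the function $g(\sigma):=p(C_\sigma)$ for $\sigma\neq\emptyset$ and $g(\emptyset):=1$, that is $\mu(\{\sigma\}):=\sum_{\tau\supseteq\sigma}(-1)^{|\tau\setminus\sigma|}g(\tau)$. This is forced: since $\mu(U_\sigma)=p(C_\sigma)$ must hold for $\sigma\neq\emptyset$ and $\mu(U_\emptyset)=\mu(\Sigma)=1$, inversion pins down every $\mu(\{\sigma\})$, giving uniqueness for free. For existence I must verify $\mu$ is a probability measure inducing $p$. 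Normalization $\sum_\sigma\mu(\{\sigma\})=g(\emptyset)=1$ is immediate from the inversion. For the representation $p_\mu=p$ I would invoke the normal form Theorem~\ref{normform} to write any $\f$ as $\bigvee_j C_{\sigma_j}$; since $p$ and (once $\mu$ is known to be a measure) $p_\mu$ both satisfy (A2) they are equivalence-invariant, and both satisfy import--export (A3), so inclusion--exclusion expands $p(\f)$ and $p_\mu(\f)$ into the \emph{same} signed sum over conjunctions $C_{\bigcup_{j\in J}\sigma_j}$, on which they agree because $p_\mu(C_\sigma)=\mu(U_\sigma)=g(\sigma)=p(C_\sigma)$ by construction.

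The main obstacle is nonnegativity $\mu(\{\sigma\})\geq0$, since the defining sum is alternating. The trick I would use is to collapse the alternation into a single monotone difference via import--export: writing $\sigma^c=\lit\setminus\sigma$, repeated use of (A3) gives $\mu(\{\sigma\})=p(C_\sigma)-p\bigl(\bigvee_{\ell\in\sigma^c}(C_\sigma\wedge\ell)\bigr)$ for $\sigma\neq\emptyset$, and $\mu(\{\emptyset\})=1-p(\Theta)$ with $\Theta:=\bigvee_{\ell\in\lit}\ell$. Each disjunct $C_\sigma\wedge\ell$ first-degree-entails $C_\sigma$, so $\bigvee_{\ell\in\sigma^c}(C_\sigma\wedge\ell)\vDash_L C_\sigma$ and monotonicity (A2) yields the first inequality, while the second is simply $p(\Theta)\leq1$ from (A1). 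Nonnegativity thus reduces exactly to (A1)--(A2) once the right formula is found, and identifying that collapsing rewrite is the one genuinely delicate point, the rest being routine bookkeeping with normal forms and inclusion--exclusion. Corollary~\ref{sccor} then packages soundness (Lemma~\ref{sclem}) with this completeness, and the final Corollary transports everything to $\fvset$ as described above.
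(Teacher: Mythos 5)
Your proposal is correct and follows essentially the same route as the paper: the corollary is reduced through Theorems~\ref{translthm} and \ref{commtriang} to the representation theorem, the canonical model is built on $\mathcal{P}(\lit)$, and nonnegativity comes from (A2) applied to $\bigvee_{y\supset x}\bigwedge_{q\in y}q\vDash_L\bigwedge_{q\in x}q$ together with (A1) for the empty state. Your M\"obius-inversion formula is just the closed form of the paper's downward recursion $W(x)=p(\bigwedge_{q\in x}q)-\sum_{y\supset x}W(y)$, so the two constructions define the same measure and the remaining steps (DNF plus inclusion--exclusion from (A3)) coincide.
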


\begin{proof}[Proof of Theorem \ref{repthm}]
Fix ${p}\in\nsset$. Let $\Sigma=\mathcal{P}(\lit)$ and let $v^\pm\colon\At \to \mathcal{P}(\Sigma)$  be defined as $v^+(q)=\{\sigma\in\Sigma\mid q\in\sigma\}$ and $v^-(q)=\{\sigma\in\Sigma\mid \neg q\in\sigma\}$ respectively. We will construct a classic probability  function $\mu\colon\mathcal{P}({\Sigma})\rightarrow[0;1]$ such that the canonical model $\mathcal{M}=\langle\Sigma, \mu, v^+, v^-\rangle$ satisfies $p_\mu=p$. It suffices to construct the underlying probability mass function $W \colon \Sigma\rightarrow[0;1]$, i.e. the function satisfying $W(x)= \mu( \{x \})$ for $x \in \Sigma$. 
We will do so by induction on $|x|$ for $x\in  \Sigma=\mathcal{P}(\lit)$. The construction proceeds in three steps. As an induction base, we set $\mu(x_{max})$ with $x_{max}$ the unique element in $\Sigma$ with $|x_{max}| =  |\lit|$. In the induction step, we define  $\mu(x)$ for all $x$ with $|x|=k\geq 1$, assuming that $\mu(y)$ has already been defined  for all $y$ with $|y|>k$. In the last step, finally, we define $\mu(\emptyset)$, where $\emptyset$ is the unique element of $\Sigma$ of cardinality 0.

We will need to ensure that that   $\mu([\varphi])= p(\varphi)$ for all $\varphi\in\Lprop$, where $[\varphi]$ denotes the truth set of $\varphi$ in the non-standard model $\langle\Sigma,v^+,v^-\rangle$,  i.e.  $[\varphi]=\{x\subseteq\lit\mid\bigwedge_{q\in x}q\vDash_L\varphi\}$.  Note that by the normal form theorem (Theorem \ref{normform}) and axiom (A2), it suffices to show this property for all $\varphi\in\Lprop$ that are in disjunctive normal form. Moreover note that for any  $\varphi,\psi\in \Lprop$ in disjunctive normal form, we have that $\mu([\varphi\vee\psi])=\mu([\varphi])+\mu([\psi])-\mu([\varphi\wedge\psi])$, as witnessed by \begin{align*}\mu([\varphi\vee\psi])=&\sum_{x\vDash\varphi\vee\psi}\mu(x)=\sum_{x\vDash\varphi}\mu(x)+\sum_{x\vDash\psi}\mu(x)-\sum_{x\vDash\varphi\wedge\psi}\mu(x)\\=&\mu([\varphi])+\mu([\psi])-\mu([\varphi\wedge\psi]).\end{align*} By (A3), hence, knowing that $\mu([*])= p(*)$ for $*\in\{\varphi,\psi,\varphi\wedge\psi\}$ guarantees that $\mu([\varphi\vee\psi])= p(\varphi\vee\psi)$. It thus suffices to show that $\mu([\varphi])= p(\varphi)$ whenever $\varphi$ is a conjunction of literals, i.e. of the form $\bigwedge_{q\in x}q$ with $x\subseteq\lit$. We will show this property to hold alongside our inductive construction.

For the first step,   let $x_{max}$ be $\bigwedge_{q\in\lit} q$,  the unique element in $ \mathcal{P}(\Sigma)$ of maximal cardinality. Note that $\{x_{max}\}$ is the truth set of the formula $\bigwedge_{q\in\lit} q$. We thus set $ W (x_{max}):= p(\bigwedge_{q\in\lit} q)$. By axiom (A1) we have that $0\leq W (x_{max})\leq1$.

For the inductive step let $k\geq 1$ and assume that $ W (y)$ has already been defined for all $y$ with $|y|>k$. We simultaneously define $ W (x)$ for all $x\subseteq\lit$ with $|x|=k$. Let such $x$ be given. Note that the truth set of $\bigwedge_{q\in x}q$ is $\{y\subseteq\lit\mid x\subseteq y\}$. By induction assumption $ W (y)$ is already defined for all $\{y\subseteq\lit\mid x\subset y\}$. We can hence define \begin{align}\label{eqn1} W (x):= p(\bigwedge_{q\in x}q)-\sum_{\{y\subseteq\lit\mid x\subset y\}} W (y).\end{align}
We have that $ W (x)\leq  p(\bigwedge_{q\in x}q)$ and thus $ W (x)\leq 1$. On the other hand, note that $\{y\subseteq\lit\mid x\subset y\}$ is the truth set of $\bigvee_{y\supset x}\bigwedge_{q\in y}q$. Hence, by induction assumption, \begin{align}\label{eqn2}\sum_{\{y\subseteq\lit\mid x\subset y\}} W (y)= p(\bigvee_{y\supset x}\bigwedge_{q\in y}q).\end{align} 

Moreover, note that $\bigvee_{y\supset x}\bigwedge_{q\in y}q\vDash\bigwedge_{q\in x}q$ and hence, by (A2),  $p(\bigvee_{y\supset x}\bigwedge_{q\in y}q)\leq p(\bigwedge_{q\in x}q)$. Combining this inequality with (\ref{eqn1}) and (\ref{eqn2}) yields $ W (x)\geq 0$. 

For the last step, finally, assume that $ W (x)$ is already defined for all $x\neq\emptyset$. We then set $ W (\emptyset) = 1-\sum_{x\neq\emptyset} W (x)$. It follows immediately that $\sum_{x\in\Sigma} W (x)=1$. Moreover, by our induction, $ W (x)\geq 0$ for all $x\neq\emptyset$, hence $ W (\emptyset)\leq 1$. On the other hand, note that  $\{x\subseteq\lit\mid x\neq\emptyset\}$ is the truth set of $\bigvee_{q\in\lit}q$. By induction assumption, $\sum_{x\neq\emptyset} W (x)= p(\bigvee_{q\in\lit}q)$. By axiom (A1), hence, $ W (\emptyset)\geq 0$. 

Along the lines of the proof, we have ensured that $\mu([\varphi])= p(\varphi)$ for all $\varphi$ of the form $\bigwedge_{q\in x}q$ for some $x\subseteq\lit$. By the above remark, this ensures that $\mu([\varphi])= p(\varphi)$ for all $\varphi$, i.e. that $p_\mu=p$. 
\end{proof}

To end the static parts of this paper, we provide a graphical overview over the relationships identified so far. By  Theorems \ref{translthm} to \ref{repthmfour}, the diagram in Figure \ref{RelPic} commutes. Moreover, each pair of opposite arrows in the upper half of the diagram, i.e. the pairs $(\tr,\trinv)$, $(p\rightarrow\mathcal{M}_p,\mu\rightarrow p_\mu)$ and $(\hat{p}\rightarrow\mathcal{M}_{\hat{p}},\mu\rightarrow \hat{p}_\mu)$ are left- and right inverses to each other. 
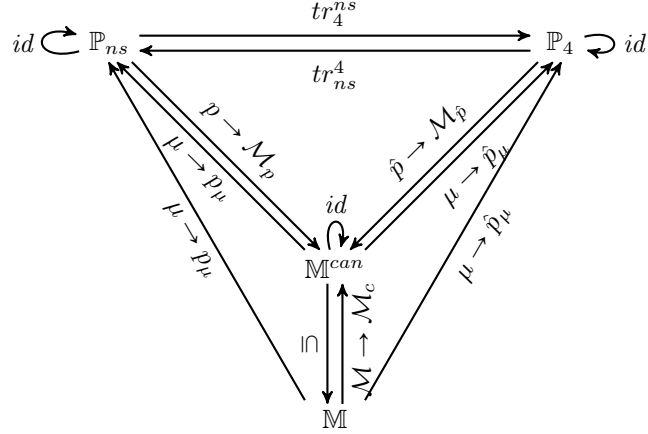
\begin{figure}
\begin{center}
\begin{tikzpicture}[->,>=stealth',auto,node distance=3cm,
  thick,main node/.style={circle,draw,font=\sffamily\Large\bfseries}]

  \node(1) at (0,0){$\nsset$};
  \node  (2) at (6,0) {$\fvset$};
  \node  (3) at (3,-3){$\Canm$};
  \node (4) at (3,-5){$\mathbb{M}$};
 
  \draw   (0.4,0.1)  to  node[above] {$\trinv$ } (5.6,0.1);
   \draw   (5.6,-0.1)  to node[below] {$\tr$ }  (0.4,-0.1);
     \draw   (0.3,-0.25) to node[above,sloped]{$p\rightarrow\mathcal{M}_p$} (2.8,-2.75);
   \draw   (2.6,-2.75)  to  node[below,sloped]{$\mu\rightarrow p_\mu$}(0.1,-0.25);
 \draw   (5.7,-0.25) to node[above,sloped]{$\hat{p}\rightarrow\mathcal{M}_{\hat{p}}$} (3.2,-2.75);
   \draw   (3.4,-2.75)  to  node[below,sloped]{$\mu\rightarrow \hat{p}_\mu$}(5.9,-0.25);
 \draw   (2.9,-3.2)  to  node[below,sloped] {$\subseteq$ } (2.9,-4.8);
   \draw   (3.1,-4.8)  to node[below,sloped] {$\ \mathcal{M}\rightarrow\mathcal{M}_c$}  (3.1,-3.2);
   \draw   (2.6,-4.75)  to  node[below,sloped]{$\mu\rightarrow p_\mu$}(-0.01,-0.25);
   \draw   (3.4,-4.75)  to  node[below,sloped]{$\mu\rightarrow \hat{p}_\mu$}(6.01,-0.25);
   \path[thick,-to]  (1) edge [loop left]  node[left] {{\it id}} (1);
   \path[thick,-to]  (2) edge [loop right]  node[right] {{\it id}} (2);
   \path[thick,-to]  (3) edge [loop above]  node[above] {{\it id}} (3);
 \end{tikzpicture}

\end{center}
\caption{The relationships identified so far. By Theorems \ref{translthm} to \ref{repthmfour}, this diagram commutes.}\label{RelPic}
\end{figure}

\section{Conditioning}\label{cond}

In a classic setting, Bayesian conditioning on a formula $\f$ describes a situation, where $\f$ is learned to be true with probability 1 -- and hence $\neg \f$ true with probability 0.  A generalization of this rule is Jeffrey conditioning, where an agent may learn the probability of $\f$ to be any value in $q\in[0;1]$, rather than only the extremal value of 1 (or 0, when $\neg \f$ is learned) permitted in Bayes' conditioning. 

Either method is best illustrated semantically. Within a classical setting, any formula $\f$ defines a binary partition $\{[\f],[\neg\f]\}$ on the state space, cf. Figure \ref{classicfig}. Jeffrey conditioning  is then executed by linearly expanding or contracting the original measure $\mu$ on $[\f]$ and $[\neg\f]$ to some new $\overline{\mu}$ in such a way that $\overline{\mu}([\f])=q$ and $\overline{\mu}([\neg\f])=1-q$. We hence get for any $\f\in\Lprop$ that
\begin{align}
\overline{\mu}([\p])=\mu([\p\wedge\f])\frac{q}{\mu([\f])}+\mu([\p\wedge\neg\f])\frac{1-q}{\mu([\neg\f])}
\label{stJeffrey}
\end{align}
which, in the case of Bayesian condition (i.e. $q=1$) reduces to the well-known formula $\overline{\mu}([\p])=\frac{\mu([\p\wedge\f])}{\mu([\f])}$. 

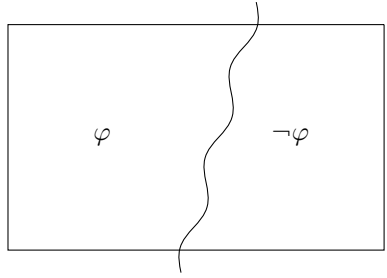
\begin{figure}
\begin{center}

\begin{tikzpicture}
\draw[style={
        decoration={
            complete sines,
            segment length=1cm,
            amplitude=.2cm
        },
        decorate}] (0.8,1.8) -- (-0.2,-1.8);
\draw (2.5,1.5) -- (2.5,-1.5) -- (-2.5,-1.5) -- (-2.5,1.5) -- (2.5,1.5);
\node at (-1.25,0){$\varphi$};
\node at (1.25,0){$\neg\varphi$};
\end{tikzpicture}
\caption{Classic Conditioning}
\label{classicfig}
\end{center}
\end{figure}

Conditionalization in our extended setting follows a similar idea. However, note that both Bayes' and Jeffrey conditioning implicitly rest on the facts that $p(\varphi)+p(\neg\f)=1$ and that $p(\f\wedge\neg\f)=0$, i.e. that there are no gaps and gluts. As this fact no longer holds, conditioning will behave differently in a non-standard setting. In fact, we will show that non-standard probabilities allow for two different notions of Jeffrey updating, one where a new value for the probability of $\f$, i.e. $p(\f)$  is learned, the other where a new value of the four-valued vector $\hat{p}(\f)$ is acquired. The former version of Jeffrey   updating is best described on the level of non-standard probability assignments, the latter on the level of four-valued  assignments. Yet, using the maps $\tr$ and $\trinv$, both versions of updating can naturally be applied to either  non-standard or four-valued probability assignments. 

Just as in the standard case, non-normal Bayes conditioning can be defined as  extremal case of Jeffrey updates. In fact,  non-normal Bayes conditioning has been studied independently, for instance in \cite{Mares}. The current framework generalizes the latter's approach by also incorporating Jeffrey updating and by identifying a number of different Bayes like updates, containing the one put forward by \citeauthor{Mares}.

\subsection{Updating on non-standard information}\label{nsupdating}

In our first notion of updating, the agent's update proscribes her to set the probability of $\f$ to some $q\in[0;1]$. Notably, within a non-standard setting, this  does not carry any information about the value of $\neg\f$ - the agent may or may not leave $p(\neg\f)$ unchanged in her update. In line with classic Jeffrey updating, {\bf non-standard Jeffrey updating} is best illustrated semantically. For any set $\p\in\Lprop$, we can dissect the state space  of a probabilistic model $\mathcal{M} = \langle\Sigma,\mu, v^+, v^-\rangle$ in two sets -- the truth set $[\f]$ of $\f$ and it's complement $\Sigma\setminus[\f]$. Unlike in the classic case, however, $\Sigma\setminus[\f]$ is not the truth set of $[\neg\f]$, nor of any other $\p\in\Lprop$. Yet, we can define Jeffrey updating as in the classic case.

\begin{definition}
Let $\mathcal{M} = \langle\Sigma,\mu, v^+, v^-\rangle$ be a probabilistic model. Let $q\in[0;1]$ and $\f\in\Lprop$ such that $\mu([\f])\in(0;1)$. Then the {\bf semantic non-standard Jeffrey update} for updating the probability of $\f$ to be $q$ on $\mathcal{M}$ is  the probabilistic model $\mathcal{M}^{\f,q} = \langle\Sigma,\mu^{\f,q}, v^+, v^-\rangle$ determined by:
\begin{align*}
\mu^{\f,q}(\{x\})=\begin{cases} \mu(\{x\})\cdot\frac{q}{\mu([\f])} &\text{ iff } x\in[\f]\\ \mu(\{x\})\cdot\frac{1-q}{1-\mu([\f])} &\text{ else. }
\end{cases}
\end{align*}
\end{definition}

\begin{fact}
Non-standard Jeffrey updating is successful, i.e. for any probabilistic model $\mathcal{M} = \langle\Sigma,\mu, v^+, v^-\rangle$, any $q\in[0;1]$ and $\f\in\Lprop$ such that $\mu([\f])\in(0;1)$ the non-standard Jeffrey update on $\M$ updating the probability of $\f$ to $q$  satisfies  $\mu^{\f,q}(x)([\f])=q$.
\end{fact}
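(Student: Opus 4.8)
The plan is to treat this as an essentially direct computation, where the only genuine content is confirming that $\mu^{\f,q}$ is a bona fide probability measure (so that $\M^{\f,q}$ really is a probabilistic model in the sense of Definition \ref{pmodeldef}); once that is in place, successfulness falls straight out of the defining formula. First I would record that, since $\Sigma$ is finite or countable and $\mu^{\f,q}$ is specified by its values on singletons, it extends by countable additivity to the full subset algebra via $\mu^{\f,q}(A)=\sum_{x\in A}\mu^{\f,q}(\{x\})$ for $A\subseteq\Sigma$. The hypothesis $\mu([\f])\in(0;1)$ guarantees both $\mu([\f])>0$ and $1-\mu([\f])>0$, so the two rescaling factors $\frac{q}{\mu([\f])}$ and $\frac{1-q}{1-\mu([\f])}$ are well defined, and because $q\in[0;1]$ they are non-negative; hence $\mu^{\f,q}(\{x\})\geq 0$ for every $x$.

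Next I would verify normalization by splitting $\Sigma$ along the partition $\{[\f],\,\Sigma\setminus[\f]\}$, which is exactly the partition on which $\mu^{\f,q}$ is defined piecewise. On the first block, factoring out the constant $\frac{q}{\mu([\f])}$ and using $\sum_{x\in[\f]}\mu(\{x\})=\mu([\f])$ yields total mass $q$; on the second block, factoring out $\frac{1-q}{1-\mu([\f])}$ and using $\sum_{x\notin[\f]}\mu(\{x\})=1-\mu([\f])$ yields total mass $1-q$. Summing the two blocks gives $q+(1-q)=1$, so $\mu^{\f,q}$ is indeed a probability measure. Absolute convergence of all these sums is immediate, as each is dominated termwise by $\mu$.

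Finally, the successfulness claim is literally the first of these two partial sums: $\mu^{\f,q}([\f])=\sum_{x\in[\f]}\mu(\{x\})\cdot\frac{q}{\mu([\f])}=\frac{q}{\mu([\f])}\cdot\mu([\f])=q$, where the penultimate step just pulls the constant factor out of the finite-or-convergent sum. I do not anticipate any real obstacle here: the decisive computation is one line, and the only point requiring care is the non-vanishing of the two denominators, which is precisely what the assumption $\mu([\f])\in(0;1)$ secures. It is worth flagging that the complementary block is the genuinely non-classical feature, since $\Sigma\setminus[\f]$ is in general not the truth set $[\neg\f]$ of any formula; but this plays no role in establishing successfulness, which concerns $[\f]$ alone.
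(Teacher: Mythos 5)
Your proposal is correct: the paper states this Fact without proof, treating it as immediate from the definition, and your one-line computation $\mu^{\f,q}([\f])=\sum_{x\in[\f]}\mu(\{x\})\cdot\frac{q}{\mu([\f])}=q$ is exactly the argument being relied on. The additional checks you supply (non-negativity, normalization over the partition $\{[\f],\Sigma\setminus[\f]\}$, and the role of the hypothesis $\mu([\f])\in(0;1)$ in keeping the denominators nonzero) are all sound and, if anything, slightly more careful than the paper itself.
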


Despite the fact that the set $\Sigma\setminus[\f]$ is not definable, we can give a syntactic characterization of non-standard Jeffrey-updating. The following is a non-standard equivalent to classic Jeffrey's updating, cf. Formula (\ref{stJeffrey}).

\begin{lemma}
Let $\mathcal{M} = \langle\Sigma,\mu, v^+, v^-\rangle$ be a probabilistic model. Let $q\in[0;1]$ and $\f\in\Lprop$ such that $\mu([\f])\in(0;1)$. Then for any $\p\in\Lprop$, the non-standard Jeffrey update $\mathcal{M}^{\f,q} = \langle\Sigma,\mu^{\f,q}, v^+, v^-\rangle$ of $\M$ satisfies:
\begin{align*}\mu^{\f,q}([\p])=\mu([\p\wedge\f])\cdot\frac{q}{\mu([\f])}+(\mu(\p)-\mu(\p\wedge\f))\frac{1-q}{1-\mu([\f])}\end{align*}
\end{lemma}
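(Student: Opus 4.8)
The plan is to unfold the definition of $\mu^{\f,q}$ as a mass function on singletons and to split the truth set $[\p]$ according to whether a state lies in $[\f]$ or not. Since $\mu^{\f,q}$ is determined by its values on singletons and $[\p]$ is the disjoint union of $[\p]\cap[\f]$ and $[\p]\setminus[\f]$, I would first write
\begin{align*}
\mu^{\f,q}([\p])=\sum_{x\in[\p]}\mu^{\f,q}(\{x\})=\sum_{x\in[\p]\cap[\f]}\mu^{\f,q}(\{x\})+\sum_{x\in[\p]\setminus[\f]}\mu^{\f,q}(\{x\}).
\end{align*}

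Next I would substitute the two cases from the definition of $\mu^{\f,q}$. Every $x\in[\p]\cap[\f]$ lies in $[\f]$ and so carries the factor $\tfrac{q}{\mu([\f])}$, while every $x\in[\p]\setminus[\f]$ lies outside $[\f]$ and so carries the factor $\tfrac{1-q}{1-\mu([\f])}$. Pulling these constant factors out of each sum yields
\begin{align*}
\mu^{\f,q}([\p])=\Big(\sum_{x\in[\p]\cap[\f]}\mu(\{x\})\Big)\frac{q}{\mu([\f])}+\Big(\sum_{x\in[\p]\setminus[\f]}\mu(\{x\})\Big)\frac{1-q}{1-\mu([\f])}.
\end{align*}

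The one genuinely semantic step is to identify these two sums with probabilities of definable events. Here I would invoke the clause for $\models^+$ of a conjunction: a state $x$ satisfies $\p\wedge\f$ positively iff it satisfies both $\p$ and $\f$ positively, so that $[\p\wedge\f]=[\p]\cap[\f]$. Consequently $\sum_{x\in[\p]\cap[\f]}\mu(\{x\})=\mu([\p\wedge\f])$, and by (countable) additivity of $\mu$ the complementary sum equals $\mu([\p])-\mu([\p\wedge\f])$. Substituting these two identities into the previous display gives exactly the asserted formula.

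The calculation is essentially routine; the only point requiring attention is that the set $\Sigma\setminus[\f]$ is in general not the truth set of any formula, as stressed in the surrounding text. This is nevertheless harmless: additivity of the classical measure $\mu$ lets us evaluate the sum over $[\p]\setminus[\f]$ as $\mu([\p])-\mu([\p\wedge\f])$ without ever needing a syntactic description of $\Sigma\setminus[\f]$. Thus the non-definability of the complement poses no obstacle, and no property of $v^+,v^-$ beyond the positive conjunction clause is required.
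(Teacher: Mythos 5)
Your proof is correct and is precisely the routine computation the paper leaves implicit (no proof of this lemma appears in the text): unfold $\mu^{\f,q}$ on singletons, split $[\p]$ into $[\p]\cap[\f]$ and $[\p]\setminus[\f]$, identify $[\p]\cap[\f]=[\p\wedge\f]$ via the positive clause for conjunction, and use additivity of $\mu$ on the non-definable remainder. Your closing remark correctly pinpoints why the non-definability of $\Sigma\setminus[\f]$ is harmless, which is exactly the point the surrounding discussion in the paper is making.
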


Notably, after translating the previous fact into its induced non-standard probability assignments $p_\mu$ and $p_{\mu^{\f,q}}$, we obtain a fully syntactic characterization of non-standard Jeffrey updating. 

\begin{definition}\label{nsJeffreydef} 
Let $p\colon\Lprop\rightarrow\mathbb{R}$ be a non-standard probability assignment, let $q\in[0;1]$ and $\f\in\Lprop$ with $p(\f)\in(0;1)$. Then the {\bf syntactic non-standard Jeffrey update} setting the probability of $\f$ to $q$ is the probability function $p^{\f,q}\colon\Lprop\rightarrow\mathbb{R}$ defined by
\begin{align*}p^{\f,q}(\p)=p(\p\wedge\f)\cdot\frac{q}{p(\f)}+(p(\p)-p(\p\wedge\f))\frac{1-q}{1-p(\f)}\end{align*}
\end{definition}

\noindent By construction, semantic and syntactic non-standard Jeffrey updating coincide in the following sense. 
\begin{fact}\label{nsJeffreyequiv}
Let $\mathcal{M} = \langle\Sigma,\mu, v^+, v^-\rangle$ be a probabilistic model, let $q\in[0;1]$ and $\f\in\Lprop$ with $p(\f)\in(0;1)$. Then $p_{\mu^{\f,q}}=p_\mu^{\f,q}$.
\end{fact}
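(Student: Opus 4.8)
The plan is to reduce this Fact entirely to the semantic characterization Lemma stated immediately above it, and then to translate that identity into the language of induced non-standard probability functions using the single defining equation $p_\mu(\chi)=\mu([\chi])$. So this Fact should require no new computation beyond the preceding Lemma; it is essentially a change of notation.

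First I would unfold the left-hand side. By the definition of the induced non-standard probability function we have $p_{\mu^{\f,q}}(\p)=\mu^{\f,q}(|\p|^+)=\mu^{\f,q}([\p])$, so it suffices to prove $\mu^{\f,q}([\p])=p_\mu^{\f,q}(\p)$ for every $\p\in\Lprop$. Next I would invoke the preceding Lemma, which already rewrites $\mu^{\f,q}([\p])$ in Jeffrey form as
\[
\mu([\p\wedge\f])\cdot\frac{q}{\mu([\f])}+\bigl(\mu([\p])-\mu([\p\wedge\f])\bigr)\cdot\frac{1-q}{1-\mu([\f])}.
\]
Then I would substitute the definitional identities $\mu([\chi])=p_\mu(\chi)$ for $\chi\in\{\p,\f,\p\wedge\f\}$. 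This turns the expression above into exactly the right-hand side of Definition \ref{nsJeffreydef}, namely $p_\mu^{\f,q}(\p)$, which closes the argument. I would also remark that since $p_\mu(\f)=\mu([\f])$, the hypothesis $p(\f)\in(0;1)$ in the statement coincides with the well-definedness hypothesis $\mu([\f])\in(0;1)$ used by the semantic update, so the two denominators are nonzero and no edge case arises.

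The only genuine content lies inside the preceding Lemma, not in this Fact, so the main (and rather mild) obstacle is really already discharged there: one splits the mass sum $\sum_{x\in[\p]}\mu^{\f,q}(\{x\})$ over the two pieces $[\p]\cap[\f]$ and $[\p]\setminus[\f]$, on which $\mu^{\f,q}$ rescales $\mu$ by the constant factors $q/\mu([\f])$ and $(1-q)/(1-\mu([\f]))$ respectively, using the set identity $[\p\wedge\f]=[\p]\cap[\f]$ (immediate from the clause $\M,s\models^+\p\wedge\f$ iff $\M,s\models^+\p$ and $\M,s\models^+\f$) together with $\mu([\p]\setminus[\f])=\mu([\p])-\mu([\p\wedge\f])$. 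Granting the Lemma, the present Fact follows directly from the definitions.
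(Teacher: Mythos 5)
Your argument is correct and matches the paper's treatment: the paper offers no separate proof of this Fact, asserting that it holds ``by construction,'' and your unwinding --- expand $p_{\mu^{\f,q}}(\p)=\mu^{\f,q}([\p])$, apply the preceding Lemma, and substitute $\mu([\chi])=p_\mu(\chi)$ for $\chi\in\{\p,\f,\p\wedge\f\}$ to recover Definition~\ref{nsJeffreydef} applied to $p_\mu$ --- is exactly that construction made explicit. Your parenthetical sketch of the preceding Lemma (splitting the mass sum over $[\p]\cap[\f]$ and $[\p]\setminus[\f]$) is also the intended argument, so nothing is missing.
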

 We will hence omit the labels and only speak of non-standard Jeffrey updating. We end this section with three facts about non-standard Jeffrey updating.

\begin{fact}
Assume that the non-standard probability function $p\colon\Lprop\rightarrow\mathbb{R}$ is classic, i.e. satisfies the Kolmogorov axioms. Moreover, let $\f\in\Lprop$ with $p(\f)\in(0;1)$ and $q\in[0;1]$. Then the non-standard and the classic Jeffrey update for setting the probability of $\varphi$ to $q$ coincide, i.e. for all $\p\in\Lprop$
\begin{align*}
p^{\f,q}(\p)=p(\p\wedge\f)\frac{q}{p(\f)}+p(\p\wedge\neg\f)\frac{q}{p(\neg\f)}.
\end{align*}
\end{fact}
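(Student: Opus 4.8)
The plan is to start from the syntactic definition of the non-standard Jeffrey update (Definition \ref{nsJeffreydef}),
$$p^{\f,q}(\p)=p(\p\wedge\f)\cdot\frac{q}{p(\f)}+\bigl(p(\p)-p(\p\wedge\f)\bigr)\frac{1-q}{1-p(\f)},$$
and to show that under the classicality assumption this expression collapses to the standard Jeffrey formula. The whole argument reduces to two identities that are automatic for a classical measure but fail for a general non-standard $p$: the complement rule $1-p(\f)=p(\neg\f)$, and the law of total probability $p(\p)-p(\p\wedge\f)=p(\p\wedge\neg\f)$. Once both are in hand, substituting them into the second summand rewrites $p^{\f,q}(\p)$ as $p(\p\wedge\f)\frac{q}{p(\f)}+p(\p\wedge\neg\f)\frac{1-q}{p(\neg\f)}$, which is exactly the classic Jeffrey update of Formula (\ref{stJeffrey}) (note that the second weight $(1-q)/(1-p(\f))$ becomes $(1-q)/p(\neg\f)$ via the complement rule).

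First I would establish the complement rule. The import--export axiom (A3) gives $p(\f\vee\neg\f)+p(\f\wedge\neg\f)=p(\f)+p(\neg\f)$. Because $p$ is classic, there are no gaps or gluts, so $p(\f\vee\neg\f)=1$ and $p(\f\wedge\neg\f)=0$; substituting yields $p(\f)+p(\neg\f)=1$, i.e.\ $1-p(\f)=p(\neg\f)$. In particular, since $p(\f)\in(0;1)$ both denominators are nonzero.

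Next I would prove total probability. Applying (A3) to the pair $\p\wedge\f$ and $\p\wedge\neg\f$ gives
$$p\bigl((\p\wedge\f)\wedge(\p\wedge\neg\f)\bigr)+p\bigl((\p\wedge\f)\vee(\p\wedge\neg\f)\bigr)=p(\p\wedge\f)+p(\p\wedge\neg\f).$$
By the Belnap--Dunn equivalences (idempotence and distributivity) the first argument reduces to $\p\wedge\f\wedge\neg\f$ and the second to $\p\wedge(\f\vee\neg\f)$. Monotonicity (A2) applied to $\p\wedge\f\wedge\neg\f\vDash_L\f\wedge\neg\f$ together with $p(\f\wedge\neg\f)=0$ forces the first term to vanish, so the equation reads $p(\p\wedge(\f\vee\neg\f))=p(\p\wedge\f)+p(\p\wedge\neg\f)$. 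I expect the main obstacle to be the remaining step, namely $p(\p\wedge(\f\vee\neg\f))=p(\p)$: the BD-entailment $\p\wedge(\f\vee\neg\f)\vDash_L\p$ holds and yields $\leq$ by (A2), but the reverse entailment $\p\vDash_L\p\wedge(\f\vee\neg\f)$ fails precisely because $\p\vDash_L\f\vee\neg\f$ can break down for gappy valuations. This is exactly where classicality, rather than pure BD reasoning, must be invoked: a classical $p$ respects classical equivalence, under which $\f\vee\neg\f$ is a tautology and hence $\p\wedge(\f\vee\neg\f)$ is classically equivalent to $\p$, giving $p(\p\wedge(\f\vee\neg\f))=p(\p)$. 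The displayed equation then becomes $p(\p)=p(\p\wedge\f)+p(\p\wedge\neg\f)$, i.e.\ $p(\p)-p(\p\wedge\f)=p(\p\wedge\neg\f)$.

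Finally, I would substitute $1-p(\f)=p(\neg\f)$ and $p(\p)-p(\p\wedge\f)=p(\p\wedge\neg\f)$ into the definition of $p^{\f,q}(\p)$; the two summands become $p(\p\wedge\f)\frac{q}{p(\f)}$ and $p(\p\wedge\neg\f)\frac{1-q}{p(\neg\f)}$, completing the proof. The only delicate point throughout is the use of classicality to validate the total-probability decomposition, since it is the one step at which BD's rejection of the excluded middle would otherwise obstruct the argument.
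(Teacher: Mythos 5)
Your proof is correct. The paper states this Fact without proof, and your route --- establishing the complement rule $1-p(\f)=p(\neg\f)$ and the total-probability identity $p(\p)-p(\p\wedge\f)=p(\p\wedge\neg\f)$ from classicality and substituting them into Definition \ref{nsJeffreydef} --- is the natural argument, with the one genuinely non-automatic step (that $p(\p\wedge(\f\vee\neg\f))=p(\p)$ requires classical equivalence rather than BD-equivalence, since $\p\vDash_L\f\vee\neg\f$ fails in BD) correctly identified and handled. One remark: what you actually derive is $p^{\f,q}(\p)=p(\p\wedge\f)\frac{q}{p(\f)}+p(\p\wedge\neg\f)\frac{1-q}{p(\neg\f)}$, which agrees with the classic Jeffrey formula (\ref{stJeffrey}); the second summand in the Fact as printed carries the weight $\frac{q}{p(\neg\f)}$, which is evidently a typo for $\frac{1-q}{p(\neg\f)}$ (otherwise the update would not even be a probability function for $q\neq\frac{1}{2}$), so your version is the correct reading of the statement.
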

\noindent From this, it follows directly that
\begin{fact}
Non-standard Jeffrey updating is not commutative. That is, there is a non-standard probability function $p\colon\Lprop\rightarrow\mathbb{R}$ and $\f,\p\in\Lprop$ and $q,r\in[0;1]$ with $p(\f),p(\p),p^{\f,q}(\p),p^{\p,r}(\f)\in(0;1)$ such that $(p^{\f,q})^{\p,r}\neq (p^{\p,r})^{\f,q}$.
\end{fact}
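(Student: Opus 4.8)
The plan is to exploit the immediately preceding Fact, which identifies non-standard Jeffrey updating with classical Jeffrey updating on any \emph{classic} non-standard probability assignment, and thereby reduce the claim to the well-known non-commutativity of classical Jeffrey conditioning. Concretely, I would first exhibit a classic $p\in\nsset$ as the induced non-standard probability function of a gap- and glut-free probabilistic model over two atoms $p_1,p_2$: let $\Sigma$ consist of the four classical valuations $w_1,\dots,w_4$ with $w_1\vDash^+ p_1\wedge p_2$, $w_2\vDash^+ p_1\wedge\neg p_2$, $w_3\vDash^+\neg p_1\wedge p_2$, $w_4\vDash^+\neg p_1\wedge\neg p_2$, and equip it with the \emph{correlated} measure $\mu=(\tfrac25,\tfrac1{10},\tfrac1{10},\tfrac25)$. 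Since the valuations admit neither gaps nor gluts, the induced $p=p_\mu$ satisfies $p(\f)+p(\neg\f)=1$ and $p(\f\wedge\neg\f)=0$ for every $\f$, so $p$ is classic and in particular lies in $\nsset$.

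Second, I would observe that non-standard Jeffrey updating preserves classicality: the semantic update alters only $\mu$, leaving $v^+,v^-$ untouched, so the updated model carries the same gap- and glut-free valuations and its induced assignment is again classic. Hence the preceding Fact applies not merely to a single update but to the iterate, so that the two non-standard double updates $(p^{\f,q})^{\p,r}$ and $(p^{\p,r})^{\f,q}$ coincide with the corresponding classical Jeffrey double updates. This reduces the statement to the purely classical assertion that Jeffrey conditioning fails to commute.

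Third, I would take $\f=p_1$, $\p=p_2$, $q=\tfrac45$ and $r=\tfrac3{10}$ and verify the side conditions: here $p(\f)=p(\p)=\tfrac12$, and a direct rescaling gives $p^{\f,q}(\p)=\tfrac{17}{25}$ and $p^{\p,r}(\f)=\tfrac{19}{50}$, all lying in $(0;1)$. Carrying out the two orders on the masses of $w_1,\dots,w_4$ then produces distinct final measures; for instance the mass of $w_1$ (equivalently the probability of $p_1\wedge p_2$) comes out to $\tfrac{0.64\cdot0.3}{0.68}\approx0.28$ when updating on $\f$ first and to $\tfrac{0.24\cdot0.8}{0.38}\approx0.51$ when updating on $\p$ first, so $(p^{\f,q})^{\p,r}\neq(p^{\p,r})^{\f,q}$, as required.

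The only real subtlety, which I would flag as the point to get right rather than a genuine obstacle, is the choice of prior: Jeffrey conditioning \emph{does} commute when the two propositions are probabilistically independent, so a product (e.g.\ uniform) measure would make the two orders agree and collapse the example. It is precisely the correlation built into $\mu$ (here $p(p_1\wedge p_2)=\tfrac25\neq\tfrac14=p(p_1)p(p_2)$) that drives the two orders apart; everything else is routine rescaling arithmetic.
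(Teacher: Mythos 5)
Your proposal is correct and follows exactly the route the paper intends: the paper derives this Fact ``directly'' from the preceding one by reducing to the non-commutativity of classical Jeffrey conditioning on a correlated prior, which is precisely your argument, and your explicit witness checks out (the two orders give mass $0.192/0.68\approx 0.28$ versus $0.192/0.38\approx 0.51$ to $p_1\wedge p_2$). Your observation that the semantic update leaves $v^+,v^-$ untouched, so classicality is preserved and the reduction applies to the iterated update as well, is the one detail the paper leaves implicit and is worth stating.
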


\subsubsection*{Non-standard Bayesian updating}

Just as in the classic case, we will define non-standard Bayesian updating as  special case of non-standard Jeffrey updating where the probability of $\f$ is set to 1. In this case, the formula of Definition \ref{nsJeffreydef} simplifies to  the same formula as in the classical case. Note that this is also the first of two approaches to Bayes updating proposed by \cite{Mares}. The second proposal by \citeauthor{Mares}, in contrast is not related to any version of Bayes updating presented here, as it strives to actively minimize conflict. 
\begin{definition}
Let $p\colon\Lprop\rightarrow\mathbb{R}$ be a non-standard probability function and let $\f\in\Lprop$ with $p(\f)>0$. Then the {\bf (positive) non-standard Bayesian update} on $\f$ is the function $p ^{\f,pos}$:
\begin{align*}p ^{\f,pos}(\p)=\frac{p(\p\wedge\f)}{p(\f)}\hspace{1cm}\text{ for }\f\in\Lprop.
\end{align*}
\end{definition}

\noindent Unlike in the classical setting, however, non-standard Bayesian updating does not cover all extremal cases. Setting the probability of $\f$ to 0 is {\it not} the same as setting the probability of $\neg\f$ to 1, hence this case needs to be treated separately. 

\begin{definition}
Let $p\colon\Lprop\rightarrow[0;1]$ be a non-standard probability function and let $\f\in\Lprop$ with $p(\f)<1$. Then the {\bf negative non-standard Bayesian update} on $\f$ is the function $p ^{\f,neg}$:
\begin{align*}p ^{\f,neg}(\p)=\frac{p(\p)-p(\p\wedge\f)}{1-p(\f)}\hspace{1cm}\text{ for }\f\in\Lprop.\end{align*}
\end{definition}

\noindent As their classic counterpart, positive and negative non-standard Bayesian conditioning are order independent:

\begin{lemma}
Let $p\colon\Lprop\rightarrow\mathbb{R}$ and let  $\f,\p\in\Lprop$ with $p(\f),p(\p),p^{\f}(\p),p^{\p}(\f)\in(0;1)$. Then $(p^{\f,*})^{\p,\times}=(p^{\p^\times})^{\f,*}$ for $*,\times\in\{pos,neg\}$.
\end{lemma}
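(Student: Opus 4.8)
The plan is to lift the statement to the semantic level and there recognise each non-standard Bayesian update as ordinary classical conditioning of the underlying measure on a \emph{fixed} subset of the state space; commutativity then reduces to the familiar fact that classical conditioning commutes. By the representation theorem (Theorem \ref{repthm}) I may assume $p=p_\mu$ for a canonical model $\mathcal{M}=\langle\Sigma,\mu,v^+,v^-\rangle$. The positive non-standard Bayesian update on $\f$ is the extremal Jeffrey update with $q=1$ and the negative one the extremal case $q=0$, so by Fact \ref{nsJeffreyequiv} the induced assignment $p^{\f,pos}$ (resp.\ $p^{\f,neg}$) is exactly $p_{\mu^{\f,1}}$ (resp.\ $p_{\mu^{\f,0}}$). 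Unwinding the semantic update at $q\in\{0,1\}$ shows, using $p(\p)-p(\p\wedge\f)=\mu([\p]\setminus[\f])$ and $1-p(\f)=\mu(\Sigma\setminus[\f])$, that $\mu^{\f,1}$ is $\mu$ conditioned on $[\f]$ and $\mu^{\f,0}$ is $\mu$ conditioned on $\Sigma\setminus[\f]$; here $[\f]$ is the truth set of $\f$ in $\langle\Sigma,v^+,v^-\rangle$.

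The crucial observation is that the event on which we condition -- $[\f]$ or $\Sigma\setminus[\f]$, and likewise $[\p]$ or $\Sigma\setminus[\p]$ -- depends only on the valuations $v^+,v^-$, which no update alters. Writing $E_1,E_2\subseteq\Sigma$ for the two events selected by $(\f,*)$ and $(\p,\times)$, and $\mu_E$ for the conditional measure $\mu(\,\cdot\mid E)$, the two iterated updates correspond to the two iterated conditionings $(\mu_{E_1})_{E_2}$ and $(\mu_{E_2})_{E_1}$ of one and the same classical measure. These satisfy
\[
(\mu_{E_1})_{E_2}=\mu_{E_1\cap E_2}=(\mu_{E_2})_{E_1},
\]
whenever $\mu(E_1\cap E_2)>0$, which is manifestly symmetric in $E_1$ and $E_2$. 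Passing back through the induced non-standard probability functions then yields $(p^{\f,*})^{\p,\times}=(p^{\p,\times})^{\f,*}$.

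The only genuine bookkeeping is to verify that every measure and conditioning above is well defined, i.e.\ that the relevant denominators are nonzero; this is precisely what the hypotheses $p(\f),p(\p),p^{\f}(\p),p^{\p}(\f)\in(0;1)$ guarantee, since $p^{\f,*}(\p)>0$ translates exactly into $\mu(E_1\cap E_2)>0$. I would also stress where the argument fails for Jeffrey updating (cf.\ the preceding non-commutativity fact): a Jeffrey update does not merely restrict $\mu$ to a fixed event but reweights the two cells of the partition by factors $q/\mu([\f])$ and $(1-q)/(1-\mu([\f]))$ that depend on the \emph{current} measure, so a second update overrides the first. The main ``obstacle'' here is thus essentially conceptual rather than technical -- correctly identifying Bayes updates with fixed-event conditioning -- after which commutativity is immediate. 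For a self-contained check one may instead argue purely syntactically: expanding $(p^{\f,*})^{\p,\times}(\chi)$ by the defining formulas and simplifying reduces, in each of the cases $*,\times\in\{pos,neg\}$, to an expression symmetric in $\f$ and $\p$, using only that $\chi\wedge\p\wedge\f\Dashv\vDash_L\chi\wedge\f\wedge\p$ and that $p$ respects logical equivalence.
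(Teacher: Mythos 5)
Your proof is correct. The paper in fact states this lemma without giving any proof, so there is no official argument to compare against; but your strategy---pass to a model realizing $p$ via Theorem \ref{repthm}, recognise the positive and negative Bayes updates as the extremal Jeffrey updates $q=1$ and $q=0$, i.e.\ as classical conditioning of $\mu$ on the fixed events $[\f]$ resp.\ $\Sigma\setminus[\f]$, and conclude by commutativity of iterated conditioning $(\mu_{E_1})_{E_2}=\mu_{E_1\cap E_2}=(\mu_{E_2})_{E_1}$---is precisely the technique the authors themselves deploy for the interaction-principles lemma in Section 6.3, where $(\mu^{\f,1})^{\neg\f,0}$ and $(\mu^{\neg\f,0})^{\f,1}$ are identified by characterising both as the normalised restriction of $\mu$ to a fixed cell. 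Your handling of the side conditions is also right: for either choice of $\times$ the requirement that the second update be defined reduces to $\mu(E_1\cap E_2)>0$, which is what $p^{\f,*}(\p)\in(0;1)$ delivers and which is symmetric in the two events, so both orders of iteration are defined simultaneously; the only cosmetic caveat is that the lemma's hypothesis $p^{\f}(\p)\in(0;1)$ should be read as referring to $p^{\f,*}$ for the relevant $*$, an ambiguity already present in the paper's statement.
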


\subsection{Updating on four-valued information}\label{fvupdating}

Within non-standard probability, knowing the probability of $\f$ does not provide any information about the probability of $\neg\f$. Hence, in learning about $\f$, two cases are to be distinguished. In the first case, the agent only receives information about $\f$, without learning anything about $\neg\f$ or $\f\wedge\neg\f$. In the second case, the agent learns the full probabilistic information about $\f$, that is, the probabilities of $\f$ and $\neg\f$, but also the size of  the corresponding gap and glut. As discussed above, this information can be encoded in a vector $(b,d,u,c)\in\mathbb{R}^4$ specifying the new pure belief (i.e. belief without conflict), pure disbelief (belief in $\neg\f$ without conflict), uncertainty and conflict about $\f$. 

Again, the notion of four-valued Jeffrey updating is best illustrated semantically. As shown in Figure \ref{fvfig}, for any $\f\in\Lprop$, the sets of pure belief, pure disbelief, uncertainty and conflict about $\f$ jointly form a partition $\left([\f]\setminus[\f\wedge\neg\f],([\neg\f]\setminus[\f\wedge\neg\f],\Sigma\setminus[\f\vee\neg\f],[\f\wedge\neg\f]\right)$ of a probabilistic model $\M$. Hence, a similar idea as in classic Jeffrey updating can be applied, linearly expanding or shrinking the measure on each of these four cells to their appropriate size. Notably, linear expansion (to a larger size) is only well defined if the cell to be expanded has a strictly positive measure. We capture this with the notion of admissibility of a vector $(b,d,u,c)$: 

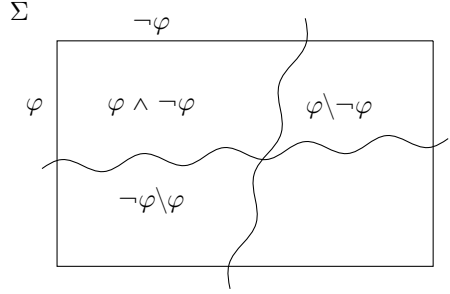
\begin{figure}
\begin{center}

\begin{tikzpicture}
\draw[style={
        decoration={
            complete sines,
            segment length=1cm,
            amplitude=.2cm
        },
        decorate}] (0.8,1.8) -- (-0.2,-1.8);
        \draw[style={
        decoration={
            complete sines,
            segment length=1cm,
            amplitude=.2cm
        },
        decorate}] (-2.7,-0.2) -- (2.7,0.2);

\draw (2.5,1.5) -- (2.5,-1.5) -- (-2.5,-1.5) -- (-2.5,1.5) -- (2.5,1.5);
\node at (-3,1.9){$\Sigma$};
\node at (-2.8,.625){$\varphi$};
\node at (-1.25,1.7){$\neg\varphi$};
\node at (-1.25,.625){$\varphi\wedge\neg\varphi$};
\node at (1.25,.625){$\varphi\setminus\neg\varphi$};
\node at (-1.25,-.625){$\neg\varphi\setminus\varphi$};

\end{tikzpicture}
\caption{Four-valued conditioning}
\label{fvfig}
\end{center}
\end{figure}

\begin{definition}
 Let $\mathcal{M} = \langle\Sigma,\mu, v^+, v^-\rangle$, let $\f\in\Lprop$ and denote $\hat{p}_\mu(\f)$ by $(b_\f,d_\f,u_\f,c_\f)$. We call a vector $(b,d,u,c)\in[0;1]^4$ with $b+d+u+c=1$ {\bf admissible for} $\f$ if it satisfies that $b=0$ if $b_\f=0$, $d=0$ if $d_\f=0$, $u=0$ if $u_\f=0$ and $c=0$ if $c_\f=0$.
\end{definition}
\begin{definition}Let $\mathcal{M} = \langle\Sigma,\mu, v^+, v^-\rangle$ be a probabilistic model, let $\f\in\Lprop$ and let  $(b,d,u,c)\in[0;1]^4$ admissible for $\f$. Then {\bf four-valued Jeffrey updating} on $\f$ to $(b,d,u,c)$ is   the model $\mathcal{M}^{\f,(b,d,u,c)} = \langle\Sigma,\mu^{\f,(b,d,u,c)}, v^+, v^-\rangle$ with:
\begin{align*}
\mu^{\f,q}(x)=\begin{cases} \mu(x)\cdot\frac{b}{\mu([\f]-\mu[\f\wedge\neg\f])} &\text{ iff } x\in[\f]\setminus[\f\wedge\neg\f]\\
 \mu(x)\cdot\frac{d}{\mu([\neg\f])-\mu([\f\wedge\neg\f])} &\text{ iff } x\in[\neg\f]\setminus[\f\wedge\neg\f]\\
 \mu(x)\cdot\frac{c}{\mu([\f\wedge\neg\f])} &\text{ iff } x\in [\f\wedge\neg\f]\\
 \mu(x)\cdot\frac{u}{1-\mu([\f\vee\neg\f])} &\text{ else }
\end{cases}
\end{align*}
\end{definition}

\begin{fact}
Four-valued Jeffrey updating is successful, i.e. for any probabilistic model $\mathcal{M} = \langle\Sigma,\mu, v^+, v^-\rangle$, any $\f\in\Lprop$ and any  $(b,d,u,c)\in[0;1]^4$ that is admissible for $\f$,  the non-standard Jeffrey update on $\M$ setting the probability of $\f$ to $(b,d,u,c)$  satisfies  $\hat{p}_{\mu^{\f,(b,d,u,c)}}(\f)=(b,d,u,c)$.
\end{fact}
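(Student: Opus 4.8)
The plan is to compute $\hat{p}_{\mu^{\f,(b,d,u,c)}}(\f)$ straight from its definition and verify that each of its four coordinates matches the prescribed target. The first thing I would record is that the update alters only the measure, leaving $v^+$ and $v^-$ fixed; hence the positive and negative extensions $|\f|^+$, $|\f|^-$, and therefore the four cells $|\f|^b$, $|\f|^d$, $|\f|^c$, $|\f|^u$, denote the very same subsets of $\Sigma$ in $\M$ and in $\M^{\f,(b,d,u,c)}$. All that changes between the two models is how these fixed sets are measured.

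Second, I would rewrite the four denominators of the update rule in terms of cell masses. Since $[\f]=|\f|^+$, $[\neg\f]=|\f|^-$, $[\f\wedge\neg\f]=|\f|^+\cap|\f|^-=|\f|^c$ and $[\f\vee\neg\f]=|\f|^+\cup|\f|^-$, and since $|\f|^+$ is the disjoint union of $|\f|^b$ and $|\f|^c$ (and likewise $|\f|^-=|\f|^d\cup|\f|^c$), the four denominators $\mu([\f])-\mu([\f\wedge\neg\f])$, $\mu([\neg\f])-\mu([\f\wedge\neg\f])$, $\mu([\f\wedge\neg\f])$ and $1-\mu([\f\vee\neg\f])$ equal $\mu(|\f|^b)$, $\mu(|\f|^d)$, $\mu(|\f|^c)$ and $\mu(|\f|^u)$ respectively. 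In other words, on each cell the update multiplies $\mu$ by the ratio of the target mass to the original mass of that cell.

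With this in hand the computation is immediate. The first coordinate of $\hat{p}_{\mu^{\f,(b,d,u,c)}}(\f)$ is $\mu^{\f,(b,d,u,c)}(|\f|^b)$; summing the rescaled point masses over $x\in|\f|^b$ gives $\frac{b}{\mu(|\f|^b)}\sum_{x\in|\f|^b}\mu(\{x\})=b$, and the analogous sums over $|\f|^d$, $|\f|^u$ and $|\f|^c$ yield $d$, $u$ and $c$. Stacking the four equalities gives $\hat{p}_{\mu^{\f,(b,d,u,c)}}(\f)=(b,d,u,c)$, as claimed.

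The only delicate point---the main obstacle, such as it is---is the degenerate case in which some cell is $\mu$-null, so the corresponding denominator vanishes and the rescaling factor is formally $0/0$. This is precisely the situation the admissibility hypothesis is designed to rule in: if, say, $b_\f=\mu(|\f|^b)=0$, then admissibility forces $b=0$, the cell receives new mass $0$, and the target coordinate $b=0$ is matched vacuously. I would therefore separate off the null cells, fix the convention that a $\mu$-null cell contributes $0$ to $\mu^{\f,(b,d,u,c)}$, and observe that the claim holds trivially there. As a final sanity check I would confirm that $\mu^{\f,(b,d,u,c)}$ really is a probability measure: the four cells partition $\Sigma$, so the total new mass is $b+d+u+c=1$ since the target is a probability vector, and non-negativity is clear.
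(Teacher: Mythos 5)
Your proof is correct and is exactly the direct verification this Fact calls for (the paper states it without proof): identifying the four update cells with $|\f|^b,|\f|^d,|\f|^c,|\f|^u$, observing that each denominator is the $\mu$-mass of the corresponding cell so the update rescales each cell to its target mass, and using admissibility to dispose of the $\mu$-null cells via the $\tfrac{0}{0}=0$ convention that the paper itself adopts. Nothing is missing.
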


Just as in the case of non-standard Jeffrey conditioning, we obtain a purely syntactic characterization of four-valued Jeffrey updating. Unfortunately, the drop in elegance with respect to standard Jeffrey updating is significant. 

\begin{lemma}
Let $\mathcal{M} = \langle\Sigma,\mu, v^+, v^-\rangle$ be a probabilistic model, let $\f\in\Lprop$ and let $(b,d,u,c)\in[0;1]^4$ be admissible for $\f$. Then non-standard Jeffrey update on $\M$ setting the probability of $\f$ to $(b,d,u,c)$ satisfies for any $\p\in\Lprop$ that 
\begin{align*}
b'_\p=&\frac{b}{b_\f}(b_{\f\swedge\p})+\frac{d}{d_\f}(b_{\sneg\f\swedge\p})\\+&\frac{u}{u_\f}(d_{\f\swedge\sneg\f\swedge\p\swedge\sneg\p}-d_{\f\swedge\sneg\f\swedge\p}-c_{\f\swedge\sneg\f\swedge\p}+c_{\f\swedge\sneg\f\swedge\p\swedge\sneg\p})+\frac{c}{c_\f}(c_{\f\swedge\sneg\f\swedge\p}-c_{\f\swedge\sneg\f\swedge\p\swedge\sneg\p})\\
d'_\p=&\frac{b}{b_\f}(b_{\f\swedge\sneg\p})+\frac{d}{d_\f}(b_{\sneg\f\swedge\sneg\p})\\+&\frac{u}{u_\f}(d_{\f\swedge\sneg\f\swedge\p\swedge\sneg\p}-d_{\f\swedge\sneg\f\swedge\sneg\p}-c_{\f\swedge\sneg\f\swedge\sneg\p}+c_{\f\swedge\sneg\f\swedge\p\swedge\sneg\p})+\frac{c}{c_\f}(c_{\f\swedge\sneg\f\swedge\sneg\p}-c_{\f\swedge\sneg\f\swedge\p\swedge\sneg\p})\\
u'_\p=&\frac{b}{b_\f}(b_\f-b_{\f\swedge\p}-b_{\f\swedge\sneg\p}-c_{\f\swedge\p}+c_{\f\swedge\sneg\f\swedge\p})+\frac{d}{d_\f}(d_\f-b_{\sneg\f\swedge\p}-b_{\sneg\f\swedge\sneg\p}-c_{\sneg\f\swedge\p}+c_{\f\swedge\sneg\f\swedge\p})\\
&+\frac{u}{u_\f}(1-d_{\f\swedge\sneg\f\swedge\p\swedge\sneg\p}-c_{\f\swedge\sneg\f\swedge\p\swedge\sneg\p})+\frac{c}{c_\f}(c_\f-c_{\f\swedge\sneg\f\swedge\p}-c_{\f\swedge\sneg\f\swedge\sneg\p}+c_{\f\swedge\sneg\f\swedge\p\swedge\sneg\p})\\
c'_\p=&\frac{b}{b_\f}(c_{\f\swedge\p} - c_{\f\swedge\sneg\f\swedge\p})+ \frac{d}{d_\f}(c_{\sneg\f\swedge \p} - c_{\f\swedge\sneg\f\swedge\p})\\+&\frac{u}{u_\f}(c_\p-c_{\f\swedge\p}+c_{\f\swedge\sneg\f\swedge\p}-c_{\sneg\f\swedge\p}+c_{\f\swedge\sneg\f\swedge\p}-c_{\f\swedge\sneg\f\swedge\p\swedge\sneg\p})+\frac{c}{c_\f}(c_{\f\swedge\sneg\f\swedge\p\swedge\sneg\p})\\
\end{align*}
where $( {b}_\p, {d}_\p, {u}_\p, {c}_\p)$ and $(b'_\p,d'_\p ,u'_\p,c'_\p)$ denote the four-valued probability vector of $\p$ {\it before} and {\it after} the update. In the above equations, $\sneg\p$ is shorthand for $\neg\p$, while $\f\swedge\p$ stands for $\f\wedge\p$.  For ease of notation, this formula uses the convention that $\frac{0}{0}=0$. \end{lemma}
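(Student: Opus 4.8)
The plan is to reduce the four displayed formulas to a single combinatorial computation over a $4\times4$ decomposition of the state space. First I would put the update into pushforward form. The four cells in the case distinction defining $\mu^{\f,(b,d,u,c)}$ are exactly the pure belief, pure disbelief, conflict and uncertainty sets of $\f$: one has $[\f]\setminus[\f\wedge\neg\f]=|\f|^b$, $[\neg\f]\setminus[\f\wedge\neg\f]=|\f|^d$, $[\f\wedge\neg\f]=|\f|^c$ and $\Sigma\setminus[\f\vee\neg\f]=|\f|^u$, using $[\f\wedge\neg\f]=|\f|^+\cap|\f|^-$ and $[\f\vee\neg\f]=|\f|^+\cup|\f|^-$. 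Hence the four normalizing denominators are $b_\f,d_\f,c_\f,u_\f$, and summing the pointwise rescaling over the points of any measurable $X$, grouped by cell, gives
\[
\mu^{\f,(b,d,u,c)}(X)=\frac{b}{b_\f}\mu(X\cap|\f|^b)+\frac{d}{d_\f}\mu(X\cap|\f|^d)+\frac{u}{u_\f}\mu(X\cap|\f|^u)+\frac{c}{c_\f}\mu(X\cap|\f|^c),
\]
where admissibility of $(b,d,u,c)$ makes each summand well defined under the convention $\frac{0}{0}=0$.

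Second, I would instantiate this at $X=|\p|^b,|\p|^d,|\p|^u,|\p|^c$ to read off $b'_\p,d'_\p,u'_\p,c'_\p$. Writing $m_{XY}:=\mu(|\f|^X\cap|\p|^Y)$ for $X,Y\in\{b,d,u,c\}$, each output coordinate becomes a fixed linear combination of these sixteen joint masses; for instance $b'_\p=\frac{b}{b_\f}m_{bb}+\frac{d}{d_\f}m_{db}+\frac{u}{u_\f}m_{ub}+\frac{c}{c_\f}m_{cb}$, and the three other coordinates use the columns $Y=d,u,c$. Matching against the claimed formulas thus reduces the lemma to expressing each $m_{XY}$ through four-valued probabilities of conjunctions of $\f,\neg\f,\p,\neg\p$.

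Third, and at the heart of the argument, I would establish those identities. Each state lies in exactly one joint cell determined by the $\{b,d,u,c\}$-type of $\f$ and of $\p$, so the four-valued mass of any compound formula is a sum of certain $m_{XY}$, computable purely from the BD clauses (positive extensions intersect, negative extensions union under $\wedge$). Three observations drive the bookkeeping: (i) $|\f|^d=|\neg\f|^b$, so disbelief rows and columns reduce to belief rows and columns after substituting $\neg\f$ or $\neg\p$; (ii) $\f\wedge\p$ and $\f\wedge\neg\p$ isolate the belief cells directly, while $c_{\f\wedge\p}$ collects the mixed conflict cells; (iii) the uncertainty and conflict rows of $\f$ are not positive extensions of any single formula, so I would access them through the contradiction $\f\wedge\neg\f$, which has positive extension $|\f|^c$ and negative extension $|\f|^+\cup|\f|^-$, whence $|\f|^u=|\f\wedge\neg\f|^u$. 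Taking disbelief and conflict masses of $\f\wedge\neg\f\wedge\p$ and $\f\wedge\neg\f\wedge\p\wedge\neg\p$ then singles out the desired $U$- and $C$-row cells after a short inclusion--exclusion.

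The main obstacle is precisely this last reduction: because $|\f|^u$ and $|\f|^c$ are reachable only indirectly, the $u$- and $c$-summands emerge as signed sums of several four-valued masses, and one must verify that every spurious joint cell cancels. I would control this by tabulating, for each auxiliary formula that appears ($\f\wedge\neg\f\wedge\p$, $\f\wedge\neg\f\wedge\p\wedge\neg\p$ and their $\neg\p$-variants), which of the sixteen cells fall in its belief, disbelief, uncertainty and conflict sets, and then reading off the target $m_{XY}$, so the cancellations become mechanical. Finally, the uncertainty column $m_{Xu}$ need not be computed separately: once the belief, disbelief and conflict columns are known, it is the complementary mass within row $X$, since $m_{Xb}+m_{Xd}+m_{Xu}+m_{Xc}=\mu(|\f|^X)$ equals $b_\f,d_\f,u_\f,c_\f$ respectively. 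This, together with the substitution $\p\mapsto\neg\p$, yields $d'_\p,u'_\p,c'_\p$ from the computation already carried out for $b'_\p$.
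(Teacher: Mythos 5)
Your proposal is correct and follows essentially the same route as the paper: the paper's proof also decomposes the state space into the sixteen joint cells determined by the $\{b,d,u,c\}$-type of $\f$ and of $\p$ (labelled $1$--$16$ in its figure), observes that the update rescales each row of four cells by $\tfrac{b}{b_\f},\tfrac{d}{d_\f},\tfrac{u}{u_\f},\tfrac{c}{c_\f}$ while each output coordinate sums a column, and then verifies that each term of the displayed formulas picks out the intended cell. The only difference is presentational --- the paper carries out that last verification by inspecting Venn-style diagrams of the belief and disbelief sets of the auxiliary formulas, whereas you do the same bookkeeping algebraically via the masses $m_{XY}$ and inclusion--exclusion.
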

\begin{proof}
Consider the propositions $\f$ and $\p$ as well as the labeling of areas in the top row of Figure \ref{lastsevenpics}. 
\ignore{
\begin{figure}
\begin{center}
\begin{tikzpicture}
\draw (1.5,1.5) -- (1.5,-1.5) -- (-1.5,-1.5) -- (-1.5,1.5) -- (1.5,1.5);
\draw[pattern=dots, pattern color=black] (0,1.5) -- (0,-1.5) -- (-1.5,-1.5) -- (-1.5,1.5) -- (0,1.5);
\draw[fill=gray!60,opacity=0.5]  (1.5,1.5) -- (1.5,0) -- (-1.5,0) -- (-1.5,1.5) -- (1.5,1.5);
\node at (0,-2){$\varphi$};
\end{tikzpicture}
\hspace{.5cm}
\begin{tikzpicture}
\draw (1.5,1.5) -- (1.5,-1.5) -- (-1.5,-1.5) -- (-1.5,1.5) -- (1.5,1.5);
\draw (0,1.5) -- (0,-1.5) -- (-1.5,-1.5) -- (-1.5,1.5) -- (0,1.5);
\draw  (1.5,1.5) -- (1.5,0) -- (-1.5,0) -- (-1.5,1.5) -- (1.5,1.5);
\draw[pattern=dots, pattern color=black] (1,0.5) -- (1,-0.5) -- (-1,-0.5) -- (-1,0.5) -- (1,0.5);
\draw[fill=gray!60,opacity=0.5]  (0.5,1) -- (0.5,-1) -- (-0.5,-1) -- (-0.5,1) -- (0.5,1);
\node at (0,-2){$\p$};
\end{tikzpicture}
\hspace{.5cm}
\begin{tikzpicture}
\draw (1.5,1.5) -- (1.5,-1.5) -- (-1.5,-1.5) -- (-1.5,1.5) -- (1.5,1.5);
\draw (0,1.5) -- (0,-1.5) -- (-1.5,-1.5) -- (-1.5,1.5) -- (0,1.5);
\draw  (1.5,1.5) -- (1.5,0) -- (-1.5,0) -- (-1.5,1.5) -- (1.5,1.5);
\draw (1,0.5) -- (1,-0.5) -- (-1,-0.5) -- (-1,0.5) -- (1,0.5);
\draw (0.5,1) -- (0.5,-1) -- (-0.5,-1) -- (-0.5,1) -- (0.5,1);
\node at (-0.75,-0.25){1};
\node at (-0.25,-0.75){2};
\node at (-1,-1){3};
\node at (-0.25,-0.25){4};
\node at (0.75,0.25){5};
\node at (0.25,0.75){6};
\node at (1,1){7};
\node at (0.25,0.25){8};
\node at (0.75,-0.25){9};
\node at (0.25,-0.75){10};
\node at (1,-1){11};
\node at (0.25,-0.25){12};
\node at (-0.75,0.25){13};
\node at (-0.25,0.75){14};
\node at (-1,1){15};
\node at (-0.25,0.25){16};
\node at (0,-2){$ $};
\end{tikzpicture}
\end{center}
\caption{Belief and disbelief sets of $\f$ (right) and $\p$ (center). Belief sets are dotted, disbelief set shaded. Right: The joint diagram falls into sixteen sections.}\label{firstfourpic}
\end{figure}}
By definition of updating, the mass of areas 1-4 need to be multiplied by $\frac{b}{b_\f}$, that of areas  5-8 by $\frac{d}{d_\f}$, the weight of areas  9-12 by $\frac{u}{u_\f}$ and that of areas 13-16 by $\frac{b}{b_\f}$. Moreover, the agent's pure belief in $\p$, i.e. $\overline{b}_\p$ is the joint mass of areas 1, 5, 9 and 13, her disbelief in $\p$ the joint mass of areas 2, 6, 10 and 14, her uncertainty is the joint weight of areas 3, 7, 11 and 15 and her conflict set the sum of areas 4, 8, 12 and 16. 

To check correctness of the above equations, it then suffices to verify that the formulas pick out the respective fields, i.e. that $b_{\f\swedge \p}$ is the size of field 1, $b_{\sneg\f\swedge \p}$ is the size of field 5, $d_{\f\swedge\sneg\f\swedge\p\swedge\sneg\p}-d_{\f\swedge\sneg\f\swedge\p}-c_{\f\swedge\sneg\f\swedge\p}+c_{\f\swedge\sneg\f\swedge\p\swedge\sneg\p}$ is the size of field 9 and so on. That this is the case follows from the pictures in Figure \ref{lastsevenpics}, showing the belief and disbelief sets for certain composites of $\f$ and $\p$.

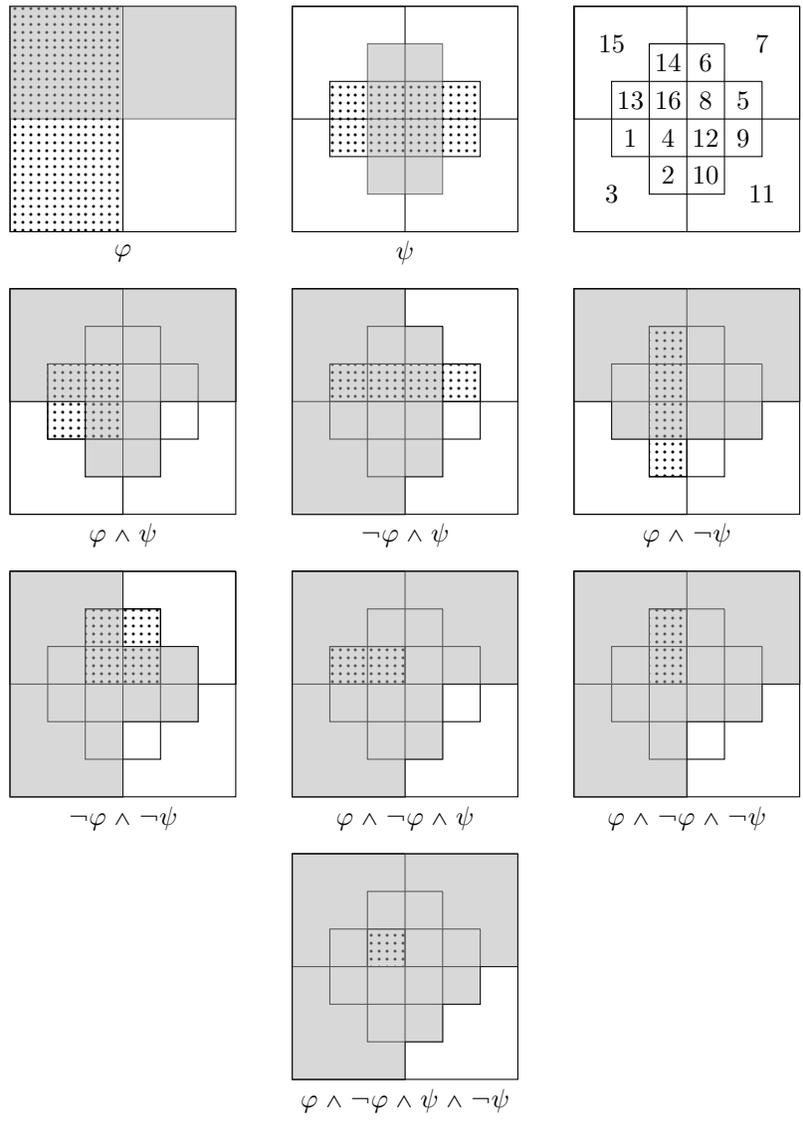
\begin{figure}
\begin{center}
\begin{tikzpicture}
\draw (1.5,1.5) -- (1.5,-1.5) -- (-1.5,-1.5) -- (-1.5,1.5) -- (1.5,1.5);
\draw[pattern=dots, pattern color=black] (0,1.5) -- (0,-1.5) -- (-1.5,-1.5) -- (-1.5,1.5) -- (0,1.5);
\draw[fill=gray!60,opacity=0.5]  (1.5,1.5) -- (1.5,0) -- (-1.5,0) -- (-1.5,1.5) -- (1.5,1.5);
\node at (0,-1.78){$\varphi$};
\node at (0,-2.1){\ };
\end{tikzpicture}
\hspace{.5cm}
\begin{tikzpicture}
\draw (1.5,1.5) -- (1.5,-1.5) -- (-1.5,-1.5) -- (-1.5,1.5) -- (1.5,1.5);
\draw (0,1.5) -- (0,-1.5) -- (-1.5,-1.5) -- (-1.5,1.5) -- (0,1.5);
\draw  (1.5,1.5) -- (1.5,0) -- (-1.5,0) -- (-1.5,1.5) -- (1.5,1.5);
\draw[pattern=dots, pattern color=black] (1,0.5) -- (1,-0.5) -- (-1,-0.5) -- (-1,0.5) -- (1,0.5);
\draw[fill=gray!60,opacity=0.5]  (0.5,1) -- (0.5,-1) -- (-0.5,-1) -- (-0.5,1) -- (0.5,1);
\node at (0,-1.78){$\p$};
\node at (0,-2.1){\ };
\end{tikzpicture}
\hspace{.5cm}
\begin{tikzpicture}
\draw (1.5,1.5) -- (1.5,-1.5) -- (-1.5,-1.5) -- (-1.5,1.5) -- (1.5,1.5);
\draw (0,1.5) -- (0,-1.5) -- (-1.5,-1.5) -- (-1.5,1.5) -- (0,1.5);
\draw  (1.5,1.5) -- (1.5,0) -- (-1.5,0) -- (-1.5,1.5) -- (1.5,1.5);
\draw (1,0.5) -- (1,-0.5) -- (-1,-0.5) -- (-1,0.5) -- (1,0.5);
\draw (0.5,1) -- (0.5,-1) -- (-0.5,-1) -- (-0.5,1) -- (0.5,1);
\node at (-0.75,-0.25){1};
\node at (-0.25,-0.75){2};
\node at (-1,-1){3};
\node at (-0.25,-0.25){4};
\node at (0.75,0.25){5};
\node at (0.25,0.75){6};
\node at (1,1){7};
\node at (0.25,0.25){8};
\node at (0.75,-0.25){9};
\node at (0.25,-0.75){10};
\node at (1,-1){11};
\node at (0.25,-0.25){12};
\node at (-0.75,0.25){13};
\node at (-0.25,0.75){14};
\node at (-1,1){15};
\node at (-0.25,0.25){16};
\node at (0,-1.78){$ $};
\node at (0,-2.1){\ };
\end{tikzpicture}

\begin{tikzpicture}
\draw (1.5,1.5) -- (1.5,-1.5) -- (-1.5,-1.5) -- (-1.5,1.5) -- (1.5,1.5);
\draw (0,1.5) -- (0,-1.5) -- (-1.5,-1.5) -- (-1.5,1.5) -- (0,1.5);
\draw  (1.5,1.5) -- (1.5,0) -- (-1.5,0) -- (-1.5,1.5) -- (1.5,1.5);
\draw (1,0.5) -- (1,-0.5) -- (-1,-0.5) -- (-1,0.5) -- (1,0.5);
\draw (0.5,1) -- (0.5,-1) -- (-0.5,-1) -- (-0.5,1) -- (0.5,1);
\draw[pattern=dots, pattern color=black] (0,0.5) -- (-1,0.5) -- (-1,-0.5) -- (0,-0.5) -- (0,0.5);
\draw[fill=gray!60,opacity=0.5]  (-1.5,1.5) -- (-1.5,0) -- (-0.5,0) -- (-0.5,-1) -- (0.5,-1) -- (0.5,0) -- (1.5,0) -- (1.5,1.5)--(-1.5,1.5);
\node at (0,-2.1){\ };
\node at (0,-1.78){$\f\wedge\p$};
\end{tikzpicture}
\hspace{.5cm}
\begin{tikzpicture}
\draw (1.5,1.5) -- (1.5,-1.5) -- (-1.5,-1.5) -- (-1.5,1.5) -- (1.5,1.5);
\draw (0,1.5) -- (0,-1.5) -- (-1.5,-1.5) -- (-1.5,1.5) -- (0,1.5);
\draw  (1.5,1.5) -- (1.5,0) -- (-1.5,0) -- (-1.5,1.5) -- (1.5,1.5);
\draw (1,0.5) -- (1,-0.5) -- (-1,-0.5) -- (-1,0.5) -- (1,0.5);
\draw (0.5,1) -- (0.5,-1) -- (-0.5,-1) -- (-0.5,1) -- (0.5,1);
\draw[pattern=dots, pattern color=black] (-1,0.5) -- (-1,0) -- (1,0) -- (1,0.5) -- (-1,0.5);
\draw[fill=gray!60,opacity=0.5]  (-1.5,1.5) -- (-1.5,-1.5) -- (0,-1.5) -- (0,-1) -- (0.5,-1) -- (0.5,1) -- (0,1) -- (0,1.5)--(-1.5,1.5);
\node at (0,-2.1){\ };
\node at (0,-1.78){$\neg\f\wedge\p$};
\end{tikzpicture}
\hspace{.5cm}
\begin{tikzpicture}
\draw (1.5,1.5) -- (1.5,-1.5) -- (-1.5,-1.5) -- (-1.5,1.5) -- (1.5,1.5);
\draw (0,1.5) -- (0,-1.5) -- (-1.5,-1.5) -- (-1.5,1.5) -- (0,1.5);
\draw  (1.5,1.5) -- (1.5,0) -- (-1.5,0) -- (-1.5,1.5) -- (1.5,1.5);
\draw (1,0.5) -- (1,-0.5) -- (-1,-0.5) -- (-1,0.5) -- (1,0.5);
\draw (0.5,1) -- (0.5,-1) -- (-0.5,-1) -- (-0.5,1) -- (0.5,1);
\draw[pattern=dots, pattern color=black] (0,1) -- (-0.5,1) -- (-0.5,-1) -- (0,-1) -- (0,1);
\draw[fill=gray!60,opacity=0.5]  (-1.5,1.5) -- (-1.5,0) -- (-1,0) -- (-1,-0.5) -- (1,-0.5) -- (1,0) -- (1.5,0) -- (1.5,1.5)--(-1.5,1.5);
\node at (0,-2.1){\ };
\node at (0,-1.78){$\f\wedge\neg\p$};
\end{tikzpicture}

\begin{tikzpicture}
\draw (1.5,1.5) -- (1.5,-1.5) -- (-1.5,-1.5) -- (-1.5,1.5) -- (1.5,1.5);
\draw (0,1.5) -- (0,-1.5) -- (-1.5,-1.5) -- (-1.5,1.5) -- (0,1.5);
\draw  (1.5,1.5) -- (1.5,0) -- (-1.5,0) -- (-1.5,1.5) -- (1.5,1.5);
\draw (1,0.5) -- (1,-0.5) -- (-1,-0.5) -- (-1,0.5) -- (1,0.5);
\draw (0.5,1) -- (0.5,-1) -- (-0.5,-1) -- (-0.5,1) -- (0.5,1);
\draw[pattern=dots, pattern color=black] (-0.5,0) -- (0.5,0) -- (0.5,1) -- (-0.5,1) -- (-0.5,0);
\draw[fill=gray!60,opacity=0.5]  (-1.5,1.5) -- (-1.5,-1.5) -- (0,-1.5) -- (0,-0.5) -- (1,-0.5) -- (1,0.5) -- (0,0.5) -- (0,1.5)--(-1.5,1.5);
\node at (0,-2.1){\ };
\node at (0,-1.78){$\neg\f\wedge\neg\p$};
\end{tikzpicture}
\hspace{.5cm}
\begin{tikzpicture}
\draw (1.5,1.5) -- (1.5,-1.5) -- (-1.5,-1.5) -- (-1.5,1.5) -- (1.5,1.5);
\draw (0,1.5) -- (0,-1.5) -- (-1.5,-1.5) -- (-1.5,1.5) -- (0,1.5);
\draw  (1.5,1.5) -- (1.5,0) -- (-1.5,0) -- (-1.5,1.5) -- (1.5,1.5);
\draw (1,0.5) -- (1,-0.5) -- (-1,-0.5) -- (-1,0.5) -- (1,0.5);
\draw (0.5,1) -- (0.5,-1) -- (-0.5,-1) -- (-0.5,1) -- (0.5,1);
\draw[pattern=dots, pattern color=black] (-1,0) -- (0,0) -- (0,0.5) -- (-1,0.5) -- (-1,0);
\draw[fill=gray!60,opacity=0.5]  (-1.5,1.5) -- (-1.5,-1.5) -- (0,-1.5) -- (0,-1) -- (0.5,-1) -- (0.5,0) -- (1.5,0) -- (1.5,1.5)--(-1.5,1.5);
\node at (0,-2.1){\ };
\node at (0,-1.78){$\f\wedge\neg\f\wedge\p$};
\end{tikzpicture}
\hspace{.5cm}
\begin{tikzpicture}
\draw (1.5,1.5) -- (1.5,-1.5) -- (-1.5,-1.5) -- (-1.5,1.5) -- (1.5,1.5);
\draw (0,1.5) -- (0,-1.5) -- (-1.5,-1.5) -- (-1.5,1.5) -- (0,1.5);
\draw  (1.5,1.5) -- (1.5,0) -- (-1.5,0) -- (-1.5,1.5) -- (1.5,1.5);
\draw (1,0.5) -- (1,-0.5) -- (-1,-0.5) -- (-1,0.5) -- (1,0.5);
\draw (0.5,1) -- (0.5,-1) -- (-0.5,-1) -- (-0.5,1) -- (0.5,1);
\draw[pattern=dots, pattern color=black] (-0.5,0) -- (0,0) -- (0,1) -- (-0.5,1) -- (-0.5,0);
\draw[fill=gray!60,opacity=0.5]  (-1.5,1.5) -- (-1.5,-1.5) -- (0,-1.5) -- (0,-0.5) -- (1,-0.5) -- (1,0) -- (1.5,0) -- (1.5,1.5)--(-1.5,1.5);
\node at (0,-2.1){\ };
\node at (0,-1.78){$\f\wedge\neg\f\wedge\neg\p$};
\end{tikzpicture}

\begin{tikzpicture}
\draw (1.5,1.5) -- (1.5,-1.5) -- (-1.5,-1.5) -- (-1.5,1.5) -- (1.5,1.5);
\draw (0,1.5) -- (0,-1.5) -- (-1.5,-1.5) -- (-1.5,1.5) -- (0,1.5);
\draw  (1.5,1.5) -- (1.5,0) -- (-1.5,0) -- (-1.5,1.5) -- (1.5,1.5);
\draw (1,0.5) -- (1,-0.5) -- (-1,-0.5) -- (-1,0.5) -- (1,0.5);
\draw (0.5,1) -- (0.5,-1) -- (-0.5,-1) -- (-0.5,1) -- (0.5,1);
\draw[pattern=dots, pattern color=black] (-0.5,0) -- (0,0) -- (0,0.5) -- (-0.5,0.5) -- (-0.5,0);
\draw[fill=gray!60,opacity=0.5]  (-1.5,1.5) -- (-1.5,-1.5) -- (0,-1.5) -- (0,-1) -- (0.5,-1)-- (0.5,-0.5)-- (1,-0.5) -- (1,0) -- (1.5,0) -- (1.5,1.5)--(-1.5,1.5);
\node at (0,-1.78){$\f\wedge\neg\f\wedge\p\wedge\neg\p$};
\end{tikzpicture}
\end{center}
\caption{Belief and disbelief sets of $\f$ (top left),  $\p$ (top center) and various combinations thereof. Belief sets are dotted, disbelief set shaded. The diagrams fall in 16 sections that are labelled as shown on the top right.}\label{lastsevenpics}
\end{figure}
\end{proof}

Again, the latter set of equations can be read purely syntactically. Thus, we get a syntactic counterpart to semantic four-valued Jeffrey updates.

\begin{definition}
Let $\hat{p}\colon\Lprop\rightarrow\mathbb{R}^4$ be a four-valued probability function and let $\f\in\Lprop$. Moreover, let $(b,d,u,c)\in[0;1]^4$   be admissible for $\varphi$. Then {\bf (syntactic) four-valued Jeffrey updating} with the vector $(b,d,u,c)$ yields a four-valued probability function $\hat{p}^{\f,(b,d,u,c)}$ defined by $\hat{p}^{\f,(b,d,u,c)}(\p)=({b}'_\p,{d}'_\p,{u}'_\p,{c}'_\p)$ with:

\begin{align*}
{b}'_\p=&\frac{b}{b_\f}(b_{\f\swedge\p})+\frac{d}{d_\f}(b_{\sneg\f\swedge\p})\\+&\frac{u}{u_\f}(d_{\f\swedge\sneg\f\swedge\p\swedge\sneg\p}-d_{\f\swedge\sneg\f\swedge\p}-c_{\f\swedge\sneg\f\swedge\p}+c_{\f\swedge\sneg\f\swedge\p\swedge\sneg\p})+\frac{c}{c_\f}(c_{\f\swedge\sneg\f\swedge\p}-c_{\f\swedge\sneg\f\swedge\p\swedge\sneg\p})\\
{d}'_\p=&\frac{b}{b_\f}(b_{\f\swedge\sneg\p})+\frac{d}{d_\f}(b_{\sneg\f\swedge\sneg\p})\\+&\frac{u}{u_\f}(d_{\f\swedge\sneg\f\swedge\p\swedge\sneg\p}-d_{\f\swedge\sneg\f\swedge\sneg\p}-c_{\f\swedge\sneg\f\swedge\sneg\p}+c_{\f\swedge\sneg\f\swedge\p\swedge\sneg\p})+\frac{c}{c_\f}(c_{\f\swedge\sneg\f\swedge\sneg\p}-c_{\f\swedge\sneg\f\swedge\p\swedge\sneg\p})\\
{u}'_\p=&\frac{b}{b_\f}(b_\f-b_{\f\swedge\p}-b_{\f\swedge\sneg\p}-c_{\f\swedge\p}+c_{\f\swedge\sneg\f\swedge\p})+\frac{d}{d_\f}(d_\f-b_{\sneg\f\swedge\p}-b_{\sneg\f\swedge\sneg\p}-c_{\sneg\f\swedge\p}+c_{\f\swedge\sneg\f\swedge\p})\\
&+\frac{u}{u_\f}(1-d_{\f\swedge\sneg\f\swedge\p\swedge\sneg\p}-c_{\f\swedge\sneg\f\swedge\p\swedge\sneg\p})+\frac{c}{c_\f}(c_\f-c_{\f\swedge\sneg\f\swedge\p}-c_{\f\swedge\sneg\f\swedge\sneg\p}+c_{\f\swedge\sneg\f\swedge\p\swedge\sneg\p})\\
{c}'_\p=&\frac{b}{b_\f}(c_{\f\swedge\p} - c_{\f\swedge\sneg\f\swedge\p})+ \frac{d}{d_\f}(c_{\sneg\f\swedge \p} - c_{\f\swedge\sneg\f\swedge\p})\\+&\frac{u}{u_\f}(c_\p-c_{\f\swedge\p}+c_{\f\swedge\sneg\f\swedge\p}-c_{\sneg\f\swedge\p}+c_{\f\swedge\sneg\f\swedge\p}-c_{\f\swedge\sneg\f\swedge\p\swedge\sneg\p})+\frac{c}{c_\f}(c_{\f\swedge\sneg\f\swedge\p\swedge\sneg\p}).
\end{align*}
\end{definition}

By construction, semantic and syntactic non-standard Jeffrey updating coincide in the following sense. 
\begin{fact}\label{fvJeffreyequiv}
Let $\mathcal{M} = \langle\Sigma,\mu, v^+, v^-\rangle$ be a probabilistic model, let $\f\in\Lprop$ and let  $(b,d,u,c)\in[0;1]^4$   be admissible for  $\f$. Then $\hat{p}_{\mu^{\f,(b,d,u,c)}}=\hat{p}_\mu^{\f,(b,d,u,c)}$.
\end{fact}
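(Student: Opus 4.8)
The plan is to observe that this Fact is essentially a bookkeeping corollary of the preceding Lemma together with the Definition of syntactic four-valued Jeffrey updating, rather than a result requiring fresh computation. Both sides of the claimed identity $\hat{p}_{\mu^{\f,(b,d,u,c)}}=\hat{p}_\mu^{\f,(b,d,u,c)}$ are four-valued probability functions on $\Lprop$, so it suffices to show that they agree on every $\p\in\Lprop$, i.e. that they return the same quadruple $(b'_\p,d'_\p,u'_\p,c'_\p)$.

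First I would unpack the left-hand side. By definition, $\hat{p}_{\mu^{\f,(b,d,u,c)}}(\p)$ is the induced four-valued probability of $\p$ in the semantically updated model $\M^{\f,(b,d,u,c)}=\langle\Sigma,\mu^{\f,(b,d,u,c)},v^+,v^-\rangle$. The preceding Lemma computes precisely this quantity: it asserts that the four-valued vector of $\p$ after the semantic update is given by the four displayed equations, each entry being a linear combination---weighted by $\frac{b}{b_\f},\frac{d}{d_\f},\frac{u}{u_\f},\frac{c}{c_\f}$---of four-valued probabilities of Boolean composites of $\f$ and $\p$ evaluated in the original model, i.e. of values of $\hat{p}_\mu$.

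Next I would unpack the right-hand side. The Definition of syntactic four-valued Jeffrey updating specifies $\hat{p}^{\f,(b,d,u,c)}(\p)$ by literally the same four equations, taking as its inputs the ``before'' values $(b_\f,d_\f,u_\f,c_\f)$ together with the various $b_{\f\swedge\p}$, $c_{\f\swedge\sneg\f\swedge\p}$, and so on. When this syntactic update is applied to $\hat{p}_\mu$ itself---which is exactly what the symbol $\hat{p}_\mu^{\f,(b,d,u,c)}$ denotes---those input values are precisely the values of $\hat{p}_\mu$ on the corresponding composites. Hence both sides are obtained from identical input data by identical formulas, so they coincide on every $\p$, and the conclusion follows.

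The only genuine subtlety, and the step I would treat most carefully, is matching the input data across the two sides and handling degenerate cells. I must verify that the four-valued probabilities of composites appearing in the Lemma (computed semantically in $\M$) agree with those fed into the syntactic Definition---which is immediate, since both are $\hat{p}_\mu$ evaluated at the same formulas---and that the admissibility hypothesis on $(b,d,u,c)$, together with the convention $\frac{0}{0}=0$, renders each coefficient $\frac{b}{b_\f},\dots,\frac{c}{c_\f}$ well defined and consistent on both sides. Since admissibility guarantees that whenever a denominator such as $b_\f$ vanishes the matching numerator $b$ also vanishes, the shared convention makes the two expressions agree term by term, so no case analysis is needed beyond recording this alignment.
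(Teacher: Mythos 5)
Your proposal is correct and matches the paper's treatment: the paper states this Fact without proof, remarking only that it holds ``by construction'' since the syntactic update was defined by reading the equations of the preceding Lemma purely syntactically, which is exactly the unpacking you carry out. Your additional care about the admissibility hypothesis and the $\frac{0}{0}=0$ convention is a sensible (if implicit in the paper) point of diligence, but it does not change the substance of the argument.
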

 We will hence omit the distinction between semantic and syntactic and only speak of four-valued Jeffrey updating. We end this section with three facts about this updating.

\begin{fact}
Assume that the four-valued probability function $\hat{p}\colon\Lprop\rightarrow\mathbb{R}^4$ is classic, i.e. $\hat{p}(\p)\in\mathbb{R}^2\times\{0\}^2$. Moreover, let $\f\in\Lprop$ and $(b,d,0,0)\in[0;1]^4$ be admissible for $\f$, i.e. $b=0$ if $p(\f)=0$ and $d=0$ if $p(\neg\f)=0$. Then the non-standard and the classic Jeffrey update setting the probability of $\varphi$ to $q$ coincide, i.e. for all $\p\in\Lprop$
\begin{align*}
\hat{p}^{\f,(b,d,0,0)}(\psi)=\left(b_{\p\wedge\f}\frac{q}{b_\f}+b_{\p\wedge\neg\f}\frac{q}{b_{\neg\f}},\ 1-b_{\p\wedge\f}\frac{q}{b_\f}-b_{\p\wedge\neg\f}\frac{q}{b_{\neg\f}},\ 0,\ 0\right)
\end{align*}
\end{fact}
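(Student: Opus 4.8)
The plan is to substitute the two classicality hypotheses directly into the syntactic four-valued Jeffrey update formula of the preceding definition and simplify. Since $\hat{p}$ is classic, $u_\p=c_\p=0$ for \emph{every} $\p\in\Lprop$, so every summand carrying a factor of the form $c_{(\cdots)}$ vanishes identically. Moreover the update vector is $(b,d,0,0)$, so its third and fourth entries are $0$; since $u_\f=c_\f=0$ by classicality, the convention $\tfrac{0}{0}=0$ makes the entire $\tfrac{u}{u_\f}(\cdots)$ and $\tfrac{c}{c_\f}(\cdots)$ blocks disappear. Reading off the four coordinates, $c'_\p=0$ is immediate, while the remaining three collapse to
\begin{align*}
b'_\p&=\tfrac{b}{b_\f}b_{\f\wedge\p}+\tfrac{d}{d_\f}b_{\neg\f\wedge\p},\qquad
d'_\p=\tfrac{b}{b_\f}b_{\f\wedge\neg\p}+\tfrac{d}{d_\f}b_{\neg\f\wedge\neg\p},\\
u'_\p&=\tfrac{b}{b_\f}\big(b_\f-b_{\f\wedge\p}-b_{\f\wedge\neg\p}\big)+\tfrac{d}{d_\f}\big(d_\f-b_{\neg\f\wedge\p}-b_{\neg\f\wedge\neg\p}\big).
\end{align*}

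To finish I would establish the two classical \emph{partition} identities $b_{\f\wedge\p}+b_{\f\wedge\neg\p}=b_\f$ and $b_{\neg\f\wedge\p}+b_{\neg\f\wedge\neg\p}=b_{\neg\f}=d_\f$. The cleanest route passes to the induced single-valued probability $p=\tr(\hat{p})$: classicality gives $c_\p=0$, hence $p(\p)=b_\p+c_\p=b_\p$ for all $\p$, so one may argue with axioms (A1)--(A3). Two facts drive the argument, both consequences of classicality: \emph{no gluts}, $p(\p\wedge\neg\p)=c_\p=0$, and \emph{no gaps}, $p(\p\vee\neg\p)=1$ (the latter because $u_\p=1-p(\p)-p(\neg\p)+p(\p\wedge\neg\p)=0$ combines with (A3) to give $p(\p\vee\neg\p)=p(\p)+p(\neg\p)-p(\p\wedge\neg\p)=1$). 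Writing $\alpha=\f\wedge\p$, $\beta=\f\wedge\neg\p$ and applying import-export (A3) yields $p(\alpha\wedge\beta)+p(\alpha\vee\beta)=p(\alpha)+p(\beta)$; here $\alpha\wedge\beta\Dashv\vDash_L\f\wedge(\p\wedge\neg\p)$ has probability $0$ by monotonicity and the no-gluts fact, whereas $\alpha\vee\beta\Dashv\vDash_L\f\wedge(\p\vee\neg\p)$ has probability $p(\f)$. This gives the first identity; the second is the same argument with $\neg\f$ in place of $\f$.

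With the partition identities in hand, $u'_\p$ collapses to $0$, and $d'_\p=(b+d)-b'_\p=1-b'_\p$, using $b+d=1$ (forced by $b+d+u+c=1$ with $u=c=0$). Finally, ``setting the probability of $\f$ to $q$'' means the new value $p(\f)=b+c=b$ equals $q$, so $b=q$ and $d=1-q$; combined with $d_\f=b_{\neg\f}$ (axiom (D3)), this turns $b'_\p$ into $b_{\p\wedge\f}\tfrac{q}{b_\f}+b_{\p\wedge\neg\f}\tfrac{1-q}{b_{\neg\f}}$, giving the asserted vector $(b'_\p,\,1-b'_\p,\,0,\,0)$. I expect the only genuine obstacle to be the identity $p(\f\wedge(\p\vee\neg\p))=p(\f)$: since $\f\nvDash_L\f\wedge(\p\vee\neg\p)$ in FDE, it cannot come from monotonicity and must be obtained by a second application of import-export to $\f$ and $\p\vee\neg\p$ together with the no-gaps fact $p(\p\vee\neg\p)=1$. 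I would also flag that the computation produces the factor $\tfrac{1-q}{b_{\neg\f}}$ (i.e.\ $\tfrac{d}{d_\f}$) in the disbelief coordinate's second summand, so the $\tfrac{q}{b_{\neg\f}}$ printed in the statement looks like a typo for $\tfrac{1-q}{b_{\neg\f}}$, consistent with the classic Jeffrey formula~(\ref{stJeffrey}).
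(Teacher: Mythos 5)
Your proof is correct; the paper states this fact without proof, and your direct substitution into the syntactic four-valued Jeffrey formula --- with the $\tfrac{u}{u_\f}$ and $\tfrac{c}{c_\f}$ blocks killed by the $\tfrac{0}{0}=0$ convention and the partition identities $b_{\f\wedge\p}+b_{\f\wedge\neg\p}=b_\f$ and $b_{\neg\f\wedge\p}+b_{\neg\f\wedge\neg\p}=d_\f$ obtained from (A2)/(A3) via $\tr$, including the extra import--export step for $p(\f\wedge(\p\vee\neg\p))=p(\f)$ --- is exactly the intended argument. You are also right that the printed factor $\tfrac{q}{b_{\neg\f}}$ should read $\tfrac{1-q}{b_{\neg\f}}$ (with $q=b$, $1-q=d$), mirroring the same typo in the non-standard version of this fact.
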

\noindent From this, it follows directly that
\begin{fact}
Non-standard Jeffrey updating is not commutative. That is, there is a four-valued  probability function  
 $\hat{p}\colon\Lprop\rightarrow[0;1]^4$, some $\f,\p\in\Lprop$ and $(b,d,u,c),(b',d',u',c')\in[0;1]$ such that $(b,d,u,c)$ is admissible for $\f$ in $\hat{p}$ and  in $\hat{p}^{(b',d',u',c')}$, while $(b',d',u',c')$ is admissible for $\p$ in both $\hat{p}$ and  $\hat{p}^{(b,d,u,c)}$, such that  
 $$(p^{\f,(b,d,u,c)})^{\p,(b',d',u',c')}\neq (p^{\p,(b',d',u',c')})^{\f,(b,d,u,c)}$$
\end{fact}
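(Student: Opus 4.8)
The plan is to obtain the counterexample directly from the preceding Fact, which shows that on classic four-valued assignments the four-valued Jeffrey update coincides with classical Jeffrey conditioning, combined with the well-known order-dependence of classical Jeffrey conditioning. The whole argument is thus a reduction to the classical theory.

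First I would fix a classical probability measure and embed it as a classic four-valued assignment $\hat{p}$, i.e.\ one with $\hat{p}(\chi)\in\mathbb{R}^2\times\{0\}^2$ for every $\chi\in\Lprop$. Concretely I would work over two atoms $p_1,p_2$, choose a classical (Kolmogorov) measure $\mu$ giving strictly positive mass to each of the four classical valuations, and let $\hat{p}=\trinv(\mu)$; since $\mu$ is classical we have $p(\f\wedge\neg\f)=0$ and $p(\f)+p(\neg\f)=1$, so every coordinate of $\hat{p}$ of the form $(\cdot,\cdot,u,c)$ has $u=c=0$, making $\hat{p}$ classic with full support. I would then take $\f=p_1$, $\p=p_2$ and update vectors $(b,d,0,0)$ and $(b',d',0,0)$ with $b,d,b',d'\in(0,1)$ and $b+d=b'+d'=1$. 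Because all the relevant pure-belief and pure-disbelief masses lie strictly in $(0,1)$ while all uncertainty and conflict masses vanish, the admissibility constraints ($u=0$ if $u_\f=0$, $c=0$ if $c_\f=0$, and no constraint on $b,d$) are met automatically, both before and after either update.

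The glue is the observation that the preceding Fact makes iteration legitimate: updating a classic assignment by a vector $(b,d,0,0)$ yields an assignment whose last two coordinates again vanish, so the output is once more classic and, whenever the targets are non-extremal, still of full support. Hence the Fact applies a second time, and the two composite four-valued updates $(\hat{p}^{\f,(b,d,0,0)})^{\p,(b',d',0,0)}$ and $(\hat{p}^{\p,(b',d',0,0)})^{\f,(b,d,0,0)}$ agree coordinate-wise with the two classical Jeffrey updates carried out in the two opposite orders (first $p(p_1)\mapsto b$ then $p(p_2)\mapsto b'$, versus the reverse). It therefore suffices to produce one classical instance where the order matters. For this I would invoke the standard non-commutativity of classical Jeffrey conditioning captured by Formula (\ref{stJeffrey}): the second update redistributes mass across the cells of the first partition and thereby disturbs the constraint the first update had imposed, so for $p_1,p_2$ dependent under $\mu$ the two orders disagree. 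Plugging explicit numbers into Formula (\ref{stJeffrey}) and comparing, say, the resulting value of $p(p_1)$ yields two distinct numbers, and transporting this inequality back through the preceding Fact gives the desired $(\hat{p}^{\f,(b,d,0,0)})^{\p,(b',d',0,0)}\neq(\hat{p}^{\p,(b',d',0,0)})^{\f,(b,d,0,0)}$.

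The only genuine work is bookkeeping rather than any conceptual obstacle: checking that the intermediate assignment stays classic and retains full support so that the preceding Fact can be applied at each step (which simultaneously discharges all four admissibility hypotheses in the statement), and verifying on the explicit numerical example that the two classical orders really differ. The non-commutativity is then inherited wholesale from the classical theory, which is exactly why the statement follows \emph{directly} from the preceding Fact.
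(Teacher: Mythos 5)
Your proposal is correct and follows exactly the route the paper intends: the paper offers no explicit proof beyond the remark that the claim ``follows directly'' from the preceding Fact, i.e.\ from the coincidence of four-valued and classical Jeffrey updating on classic assignments together with the well-known order-dependence of classical Jeffrey conditioning on dependent propositions. Your additional bookkeeping (full support, preservation of classicality under the update, and the resulting automatic admissibility) correctly discharges the hypotheses that the paper leaves implicit.
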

 
\subsubsection*{Four-valued Bayesian updating}

Just as in the classical case, we can define four-valued Bayesian updating as a special instance of Jeffrey updating where the  information acquired is extremal. Here, we focus on three cases. In the first, the agent learns the vector (1,0,0,0), i.e. she acquires full pure belief in $\varphi$. In the second and third case, the agent learns the vectors (0,0,1,0) or (0,0,0,1) respectively, acquiring full belief in uncertainty or  conflict about $\varphi$. The remaining case, learning (0,1,0,0),  follows from these, as it corresponds to updating on information (1,0,0,0) about  $\neg\f$. In either of our three cases, the above definition of four-valued Jeffrey updating simplifies to:

\begin{definition}
$i)$ Let $\hat{p}\colon\Lprop\rightarrow\mathbb{R}^4$ be a four-valued probability function such that $b_\varphi>0$, where $\hat{p}(\varphi)=(b_\varphi,d_\varphi,u_\varphi,c_\varphi)$. Then {\bf positive four-valued Bayesian updating} on $\varphi$ yields the function $\hat{p}^{\varphi,+}$ defined by $\hat{p}^{\f,+}=(b'_\p,d'_\p,u'_\p,c'_\p)$ with 
\begin{align*}
b'_\p=&\frac{b_{\f\swedge\p}}{b_\f}\\
d'_\p=&\frac{b_{\f,\sneg\p}}{b_\f}\\
u'_\p=&\frac{b_\f-b_{\f\swedge\p}-b_{\f\swedge\sneg\p}-c_{\f\wedge\p}+c_{\f\swedge\sneg\f\swedge\p}}{b_\f}\\
{c}'_\p=&\frac{c_{\f\swedge\p} - c_{\f\swedge\sneg\f\swedge\p}}{b_\f}
\end{align*}

\noindent $ii)$  Let $\hat{p}\colon\Lprop\rightarrow\mathbb{R}^4$ be a four-valued probability function such that $u_\varphi>0$, where $\hat{p}(\varphi)=(b_\varphi,d_\varphi,u_\varphi,c_\varphi)$. Then {\bf uncertainty Bayesian updating} about $\varphi$ is defined as: $\hat{p}^{\f,u}(\p)=(b'_\p,d'_\p,u'_\p,c'_\p)$ with
\begin{align*}
b'_\p=&\frac{d_{\f\swedge\sneg\f\swedge\p\swedge\sneg\p}-d_{\f\swedge\sneg\f\swedge\p}-c_{\f\swedge\sneg\f\swedge\p}+c_{\f\swedge\sneg\f\swedge\p\swedge\sneg\p}}{u_\f}\\
d'_\p=&\frac{d_{\f\swedge\sneg\f\swedge\p\swedge\sneg\p}-d_{\f\swedge\sneg\f\swedge\sneg\p}-c_{\f\swedge\sneg\f\swedge\sneg\p}+c_{\f\swedge\sneg\f\swedge\p\swedge\sneg\p}}{u_\f}\\
u'_\p=&\frac{1-d_{\f\swedge\sneg\f\swedge\p\swedge\sneg\p}-c_{\f\swedge\sneg\f\swedge\p\swedge\sneg\p}}{u_\f}\\
c'_\p=&\frac{c_\p-c_{\f\swedge\p}+c_{\f\swedge\sneg\f\swedge\p}-c_{\sneg\f\swedge\p}+c_{\f\swedge\sneg\f\swedge\p}-c_{\f\swedge\sneg\f\swedge\p\swedge\sneg\p}}{u_\f}
\end{align*}

\noindent $iii)$    Let $\hat{p}:\Lprop\rightarrow\mathbb{R}^4$ be a four-valued probability function such that $c_\varphi>0$, where $\hat{p}(\varphi)=(b_\varphi,d_\varphi,u_\varphi,c_\varphi)$. Then {\bf conflict Bayesian updating} about $\varphi$ is defined as: $\hat{p}^{\f,c}(\p)=({b}'_\p,{d}'_\p,{u}'_\p,{c}'_\p)$ with
\begin{align*}
{b}'_\p=&\frac{c_{\f\swedge\sneg\f\swedge\p}-c_{\f\swedge\sneg\f\swedge\p\swedge\sneg\p}}{c_\f}\\
{d}'_\p=&\frac{c_{\f\swedge\sneg\f\swedge\sneg\p}-c_{\f\swedge\sneg\f\swedge\p\swedge\sneg\p}}{c_\f}\\
{u}'_\p=&\frac{c_\f-c_{\f\swedge\sneg\f\swedge\p}-c_{\f\swedge\sneg\f\swedge\sneg\p}+c_{\f\swedge\sneg\f\swedge\p\swedge\sneg\p}}{c_\f}\\
{c}'_\p=&\frac{c_{\f\swedge\sneg\f\swedge\p\swedge\sneg\p}}{c_\f}
\end{align*}
\end{definition}

\noindent Just as its classic counterpart, four-valued Bayesian conditioning in all three flavors is order independent:

\begin{lemma}
Let $\hat{p}\colon\Lprop\rightarrow[0;1]$ and let  $\f,\p\in\Lprop$ such that $\hat{p}^{\f,a},\ \hat{p}^{\p,b},\ (\hat{p}^{\f,a})^{\p,b}$ and $(\hat{p}_b^{\p,b})^{\f,a}$ are all defined. 
Then $(\hat{p}^{\f,a})^{\p,b}=(\hat{p}_b^{\p,b})^{\f,a}$. for $a,b\in\{{+,u,c}\}$
\end{lemma}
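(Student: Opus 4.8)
The plan is to lift everything to the semantic level, where the three Bayesian updates turn out to be nothing but ordinary conditioning of the classical measure $\mu$ on a fixed cell, and then invoke the commutativity of iterated conditioning. First I would use the representation theorem (Theorem \ref{repthmfour}) to write $\hat{p}=\hat{p}_\mu$ for a canonical probabilistic model $\M=\langle\Sigma,\mu,v^+,v^-\rangle$. Applying Fact \ref{fvJeffreyequiv} to $\M$ and then again to the once-updated model $\M^{\f,a}=\langle\Sigma,\mu^{\f,a},v^+,v^-\rangle$ identifies the iterated syntactic update in the statement with $\hat{p}_{(\mu^{\f,a})^{\p,b}}$, and symmetrically for the reversed order. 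It therefore suffices to prove the analogous order-independence for the semantic updates, namely that $(\mu^{\f,a})^{\p,b}=(\mu^{\p,b})^{\f,a}$ as measures on $\Sigma$.

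Next I would observe that each of the three Bayesian updates is classical conditioning on a cell of the partition associated with $\f$. Recall that four-valued Bayesian updating of type $a$ is the four-valued Jeffrey update towards the extremal vector $(1,0,0,0)$, $(0,0,1,0)$ or $(0,0,0,1)$ for $a=+,u,c$ respectively. Specializing the semantic Jeffrey-update formula to these vectors, and using $\f\wedge\neg\f\vDash_L\f$ and $\f\wedge\neg\f\vDash_L\neg\f$ to rewrite the denominators $\mu([\f])-\mu([\f\wedge\neg\f])$, $1-\mu([\f\vee\neg\f])$ and $\mu([\f\wedge\neg\f])$ as the masses $\mu(|\f|^b)$, $\mu(|\f|^u)$ and $\mu(|\f|^c)$ of the respective cells, the update multiplies the mass inside one distinguished cell by the reciprocal of its mass and annihilates the other three. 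Writing $E^+_\f=|\f|^b$, $E^u_\f=|\f|^u$ and $E^c_\f=|\f|^c$, the update $\mu^{\f,a}$ is exactly the conditional measure $\mu(\,\cdot\mid E^a_\f)$, which is defined precisely when $\mu(E^a_\f)>0$, the condition for $\hat{p}^{\f,a}$ to exist.

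The crux is that the cells $E^b_\p$ are determined solely by $v^+,v^-$ and $\p$, and these valuations are left untouched by any update (only $\mu$ is rescaled). Hence $E^b_\p$ denotes the same subset of $\Sigma$ in $\M$ as in $\M^{\f,a}$, so the iterated update $(\mu^{\f,a})^{\p,b}$ first conditions on $E^a_\f$ and then on $E^b_\p$. By the standard identity for iterated conditioning, $(\mu(\,\cdot\mid E^a_\f))(\,\cdot\mid E^b_\p)=\mu(\,\cdot\mid E^a_\f\cap E^b_\p)$, and the right-hand side is visibly symmetric in the two events; performing the updates in the opposite order yields $\mu(\,\cdot\mid E^b_\p\cap E^a_\f)$, the same measure. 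Pushing this equality of measures back through $\hat{p}_{(\cdot)}$, via the two applications of Fact \ref{fvJeffreyequiv} recorded above, gives $(\hat{p}^{\f,a})^{\p,b}=(\hat{p}^{\p,b})^{\f,a}$.

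The only real care needed is the definedness bookkeeping, which I expect to be the main (though modest) obstacle. I would check that ``$(\hat{p}^{\f,a})^{\p,b}$ is defined'' unwinds to $\mu^{\f,a}(E^b_\p)>0$, equivalently $\mu(E^a_\f\cap E^b_\p)>0$, and likewise for the reversed order; since these two positivity conditions coincide, the common conditional measure $\mu(\,\cdot\mid E^a_\f\cap E^b_\p)$ is well-defined exactly when both composites in the statement are. I would also record that the extremal target vector of type $a$ is admissible for $\f$ in $\M^{\p,b}$ under this same positivity condition, so no hidden admissibility failure can arise. With these checks in place the argument is complete.
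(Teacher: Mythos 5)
Your proof is correct. The paper in fact states this lemma without proof, but your strategy is precisely the one the authors deploy for the neighbouring interaction-principles lemma: pass to the canonical model via Theorem \ref{repthmfour}, use Fact \ref{fvJeffreyequiv} to replace syntactic by semantic updates, and characterize each extremal Jeffrey update as classical conditioning of $\mu$ on the valuation-determined cell $|\f|^{b}$, $|\f|^{u}$ or $|\f|^{c}$ (the authors' conditions (1) and (2) in that proof are exactly the defining properties of the conditional measure). Your reduction to the symmetry of $\mu(\,\cdot\mid E^a_\f\cap E^b_\p)$ and your check that the definedness conditions for the two orders coincide close the argument; no gaps.
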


\subsection{Interaction Principles}

Using the translation functions $\tr$ and $\trinv$, both notions of Jeffrey conditioning,  non-standard and four-valued, work on both types of probability functions defined, non-standard and four-valued. However, the notions of updating do not correspond to each other. While non-standard Jeffrey conditioning applies to situations where only the probability of $\f$ is set, without any mention of the probabilities of $\neg\f$ or $\f\wedge\neg\f$, four-valued Jeffrey conditioning covers cases where new probabilities of $\f,\neg\f$ and the corresponding gap and glut are all proscribed simultaneously. Hence, even after appropriate transformations of their domains with $\tr$ and $\trinv$, the two types of Jeffrey updates are {\it not} interdefinable. This, however, changes if we move to non-standard and four-valued Bayesian updating. Each of the three types of four-valued Bayesian updating is equivalent to a composition of two steps of non-standard Bayesian updating. Moreover, the order of these two steps does not matter. 

\begin{lemma}
Let $\hat{p}\colon\Lprop\rightarrow\mathbb{R}^4$ be a four-valued probability assignment and let $\varphi\in\Lprop$.
\begin{itemize}
\item[$i)$] if $b_\varphi>0$, then $\tr(\hat{p}^{\varphi,+})=(\tr(\hat{p})^{\varphi,pos})^{\neg\varphi,neg}=(\tr(\hat{p})^{\neg\varphi,neg})^{\varphi,pos}$
\item[$ii)$] if $u_\varphi>0$, then $\tr(\hat{p}^{\varphi,u})=(\tr(\hat{p})^{\varphi,neg})^{\neg\varphi,neg}=(\tr(\hat{p})^{\neg\varphi,neg})^{\varphi,neg}$
\item[$iii)$] if $c_\varphi>0$, then $\tr(\hat{p}^{\varphi,u})=(\tr(\hat{p})^{\varphi,pos})^{\neg\varphi,pos}=(\tr(\hat{p})^{\neg\varphi,pos})^{\varphi,pos}$
\end{itemize}
\end{lemma}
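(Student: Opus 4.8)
The plan is to argue semantically, reducing every term appearing in the three identities to a renormalized restriction of a single classical measure $\mu$. By Theorems \ref{translthm}, \ref{commtriang} and \ref{repthmfour} we may assume without loss of generality that $\hat{p}=\hat{p}_\mu$ for the canonical probabilistic model $\mathcal{M}=\langle\mathcal{P}(\lit),\mu,v^+,v^-\rangle$ representing $\hat{p}$, and by Theorem \ref{commtriang} the translation $\tr$ simply sends a four-valued function $\hat{p}_\mu$ to the non-standard function $p_\mu=\mu(|\cdot|^+)$. Thus both sides of each claimed equation are non-standard probability functions, and by the uniqueness part of Theorem \ref{repthm} it suffices to check that the classical measures underlying them coincide on $\mathcal{P}(\lit)$.

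The central observation is that all five relevant update operations are renormalized restrictions of $\mu$ to a definable region of $\Sigma$. For the non-standard Bayes updates this is immediate from the definitions together with $p_\mu(\psi)=\mu(|\psi|^+)$: positive updating on $\chi$ replaces $\mu$ by its renormalized restriction to $[\chi]=|\chi|^+$, while negative updating on $\chi$ replaces $\mu$ by its renormalized restriction to the complement $\Sigma\setminus[\chi]$, using $p(\psi)-p(\psi\wedge\chi)=\mu(|\psi|^+\cap(\Sigma\setminus[\chi]))$. For the four-valued Bayes updates I invoke Fact \ref{fvJeffreyequiv}: updating $\hat{p}$ to the extremal vectors $(1,0,0,0)$, $(0,0,1,0)$ and $(0,0,0,1)$ concentrates all mass on, respectively, the pure-belief cell $|\varphi|^b=[\varphi]\setminus[\neg\varphi]$, the uncertainty cell $|\varphi|^u=\Sigma\setminus([\varphi]\cup[\neg\varphi])$, and the conflict cell $|\varphi|^c=[\varphi]\cap[\neg\varphi]$, each renormalized. (I note in passing that the left-hand side of $iii)$ should read $\tr(\hat{p}^{\varphi,c})$.)

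With these identifications, the three claims follow from one elementary fact: renormalizing $\mu$ first to a set $A$ of positive measure and then to a set $B$ with $\mu(A\cap B)>0$ yields exactly the renormalized restriction of $\mu$ to $A\cap B$, since the two renormalization constants $\mu(A)$ and $\mu(A)^{-1}\mu(A\cap B)$ telescope to $\mu(A\cap B)$; in particular the outcome is symmetric in $A$ and $B$, which delivers the order independence for free. It then remains only to match the regions: $[\varphi]\cap(\Sigma\setminus[\neg\varphi])=[\varphi]\setminus[\neg\varphi]$ is the pure-belief cell, giving $i)$; $(\Sigma\setminus[\varphi])\cap(\Sigma\setminus[\neg\varphi])=\Sigma\setminus([\varphi]\cup[\neg\varphi])$ is the uncertainty cell, giving $ii)$; and $[\varphi]\cap[\neg\varphi]$ is the conflict cell, giving $iii)$.

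The region-matching and the telescoping of constants are routine. The main point requiring care is well-definedness: I must check that each intermediate conditioning event has strictly positive measure, so that the composite updates in each claim are genuinely defined and the telescoping lemma applies. This is where the hypotheses $b_\varphi>0$, $u_\varphi>0$, $c_\varphi>0$ enter. For instance in $i)$, $b_\varphi=\mu([\varphi]\setminus[\neg\varphi])>0$ forces both $\mu([\varphi])>0$ and $\mu(\Sigma\setminus[\neg\varphi])>0$, so that the first update is defined in either order, and it equally forces $\mu([\varphi]\setminus[\neg\varphi])>0$, so that the second update is defined as well; the analogous bookkeeping for $ii)$ and $iii)$ is what the proof must verify. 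A secondary subtlety is the precise link, via Fact \ref{nsJeffreyequiv}, Fact \ref{fvJeffreyequiv} and the representation theorems, between the purely syntactic update operators and their semantic counterparts on $\mathcal{M}$; once this link is established the remaining argument is purely set-theoretic.
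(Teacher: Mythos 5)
Your proposal is correct and follows essentially the same route as the paper's proof: pass to the canonical model via the representation theorem, use Facts \ref{nsJeffreyequiv} and \ref{fvJeffreyequiv} to reduce to the semantic updates, and observe that composing the two non-standard Bayes conditionings in either order yields the measure renormalized on the appropriate cell ($[\varphi]\setminus[\varphi\wedge\neg\varphi]$, the uncertainty cell, or $[\varphi\wedge\neg\varphi]$). The only (cosmetic) difference is that the paper verifies the identity of measures via a uniqueness characterization by a support condition and a ratio-preservation condition, whereas you telescope the renormalization constants directly; your observation that $iii)$ should read $\tr(\hat{p}^{\varphi,c})$ is also correct.
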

\begin{proof}
$i)$ By Theorem \ref{repthmfour}, there is a unique canonical model  $\mathcal{M}=\langle\mathcal{P}(\lit), \mu, v^+, v^-\rangle$ such that $\hat{p}_\mu=\hat{p}$. By Facts \ref{nsJeffreyequiv} and \ref{fvJeffreyequiv}, it hence suffices to show the claim for semantic four-valued Jeffrey updating on $\mathcal{M}$. Note that the result of positive Bayesian updating, i.e. the updated four-valued probability function $\mu^{\f,(1,0,0,0)}$ of $\mathcal{M}^{\f,(1,0,0,0)} = \langle\mathcal{P}(\lit),\mu^{\f,(1,0,0,0)}, v^+, v^-\rangle$ is uniquely determined by the conditions
\begin{center}
\begin{tabular}{lll}
(1)& $\mu^{\f,(1,0,0,0)}(x)=0$ &whenever $x\not\in[\varphi]\setminus[\f\wedge\neg\f]$\\
(2)& $\mu^{\f,(1,0,0,0)}(x)/\mu^{\f,(1,0,0,0)}(y)=\mu(x)/\mu(y)$ &whenever $x,y\in[\varphi]\setminus[\f\wedge\neg\f]$\\&& with $\mu(y)>0$.
\end{tabular}
\end{center}
By construction, the two-valued Bayesian updates, i.e. Jeffrey updates $\mu^{\f,1}$ and $\mu^{\neg\f,0}$ both satisfy (2). Moreover, $\mu^{\f,1}(x)=0$ whenever $x\not\in[\f]$ and $\mu^{\neg\f,0}(x)=0$ whenever $x\in[\neg\f]$. Thus both $(\mu^{\f,1})^{\neg\f,0}$ and $(\mu^{\neg\f,0})^{\f,1}$ also satisfy (1). Hence, both  $(\mu^{\f,1})^{\neg\f,0}$ and $(\mu^{\neg\f,0})^{\f,1}$ satisfy conditions (1) and (2) and, hence, are identical to $\mu^{\f,(1,0,0,0)}$. This implies that $\tr(\hat{p}^{\varphi,+})=(\tr(\hat{p}^{\varphi,pos})^{\neg\varphi,neg})=(\tr(\hat{p}^{\neg\varphi,neg})^{\varphi,pos})$.\\
The proofs of $ii)$ and $iii)$ follow similarly. 
\end{proof}

\subsection{Conditioning on Partial Information}

In the previous sections we investigated updating a probability function with a generalized Jeffery rule by learning either only a new value for the  belief  in $\f$ (Section \ref{nsupdating}) or the entire four-valued probability vector assigned to $\f$ (Section \ref{fvupdating}). However, there may be other contexts where the agent   acquires partial   information about the (four-valued) probability of $\f$, e.g. only a new value for pure belief or pure disbelief in $\f$. 

The idea for conditioning on partial information proceeds along the same lines as for complete information, i.e. by a modified version of Jeffery conditioning. The only difference  is that the partiality of information, say about $\f$ does not permit to work with the full partition induced by $\f$ on a model $\mathcal{M}$, i.e. the partition into $\{[\f]\setminus[\neg\f],[\neg\f]\setminus[\f],\Sigma-[\f\vee\neg\f],[\f\wedge\neg\f]\}$, cf Figure \ref{fvfig}, but with a coarsening thereof. 


By obtaining partial information we mean that the agent learns the values of a {\it partial} assignment $a:\{b,d,u,c\}\rightharpoonup[0;1]$, i.e. an assignment proscribing new values for {\it some} of the agent's pure belief, pure disbelief, uncertainty and conflict, but not necessarily for all. Let us denote the domain of $a$, i.e. those $x\in\{b,d,u,c\}$ for which $a(x)$ is defined, by $dom(a)$. For simplicity, we assume that $\emptyset\subset  dom(a)\subset \{b,d,u,c\}$ with both inclusions strict. Following  the same intuitions as in the four-valued case, we can define conditioning on the partial information $a$ by setting the new pure belief, disbelief, uncertainty and conflict  in $\varphi$ to be $a(b),a(d),a(u)$ and $a(c)$ respectively whenever this is defined and afterwards rescaling the probabilistic mass on the remaining area appropriately. 

Formally, to ensure that the corresponding operation is well-defined, we need to assume that $\sum_{y\in dom(a)}y\leq 1$. Denoting the prior four-valued probability vector of $\f$ with  $(b_\f,d_\f,u_\f,c_\f)$, the Jeffrey updating sketched above will lead to the posterior four-valued probability vector $(\bar{b}_\f,\bar{d}_\f,\bar{u}_\f,\bar{c}_\f)$ with:

\begin{align*}
\bar{x}_\f=\begin{cases}a(x) & \text{iff }x\in dom(a)\\x\cdot \frac{1-\sum_{y\in dom(a)}y}{1-\sum_{y\in dom(a)}y_\f}&else\end{cases}
\end{align*}
for $x\in\{b,d,u,c\}$. With this, we can formally define partial Jeffrey updating. 

\begin{definition}
Let $a:\{b,d,u,c\}\rightharpoonup[0;1]$ be a partial assignment such that $\sum_{y\in dom(a)}y\leq 1$. Let $\mathcal{M} = \langle\Sigma,\mu, v^+, v^-\rangle$ be a model, let $\f\in\Lprop$ and let the vector $(\bar{b}_\f,\bar{d}_\f,\bar{u}_\f,\bar{c}_\f)$ defined above be admissible for $\f$. Then the {\bf four-valued Jeffrey update of  $\varphi$ on the partial information $a$ } is defined as the four-valued Jeffrey update on $\f$ to $(\bar{b}_\f,\bar{d}_\f,\bar{u}_\f,\bar{c}_\f)$.
\end{definition}

\section{Aggregation}\label{aggr}

Assume two agents informed you about their credences in $\varphi$. You take both agents as similarly competent and equally informed. Yet, they equip you with different assessments of $\varphi$. How, then, should you combine these judgments towards forming your own belief about $\varphi$? Within standard probability theory, your options are fairly limited. You may, for instance, decide to follow one of the agents, or build a weighted average between the two. A broad number of approaches in the literature on peer disagreement, for instance, promotes to split the difference equally see for instance \cite{Elga,Christensen} on conciliationism, but also \cite{Kellydisagr} for an opposing opinion.  

\subsection{Aggregating non-standard probabilities}

Within the non-standard probabilities studied here, further options open up. First, note that within classic probability theory, learning about the agents credence in $\p$ also informs us about her degree of belief in $\neg\p$. This does not hold true within the current non-standard setting. Hence, let us assume for the current analysis that  agents inform us about both their positive and negative attitude towards $\varphi$, that is about $p(\varphi)$ and $p(\neg\varphi)$, or even about their four-valued vector $\hat{p}(\varphi)$.  Of course, we may follow the previous strategies and form weighted averages between the agents' assessments of $\varphi$. If needed, this policy could be specified to also taking a weighted average on the agents conflict and uncertainty and, more general, their remaining belief set. 

\begin{definition}
Let $k\in[0,1]$. \\
$i)$ Assume agents $A$ and $E$ provide their non-standard assessments  of $\f$, i.e. $p_A(\varphi),\ p_E(\varphi), p_A(\neg\varphi)$ and $p_E(\neg\varphi)$. Then their {\bf $k$-weighted non-standard aggregate belief} $p^k_{\udb}$ is defined by \begin{align*}p^k_{\udb}(\varphi)=kp_A(\varphi)+(1-k)p_E(\varphi)\text{ and }p^k_{\udb}(\neg\varphi)=kp_A(\neg\varphi)+(1-k)p_E(\neg\varphi).
\end{align*} 
$ii)$ For agent $A$ and $E'$s four-valued probabilitiy assessments  $(b,d,u,c)_A$  and $(b,d,u,c)_E$ for $\varphi$, i.e. $\hat{p}_A(\varphi)$ and $\hat{p}_E(\varphi)$ their {\bf $k$-weighted four-valued aggregate belief} $\hat{p}^k_{\udb}$ is:\begin{align*}\hat{p}_{\udb}^k(\varphi)=k\hat{p}_A(\varphi)+(1-k)\hat{p}_E(\varphi).\end{align*}
\end{definition}

\begin{lemma}\label{univlem} Weighted averaging can be applied to an entire belief base simultaneously. That is, when agents $A$ and $E$ both provide their full subjective non-standard probability functions $p_A,p_E\colon\Lprop\rightarrow\mathbb{R}$ (resp. $\hat{p}_A,\hat{p}_E\colon\Lprop\rightarrow\mathbb{R}^4$), a weighted average belief $p^k_{\udb}\colon\Lprop\rightarrow\mathbb{R}$  can be defined by $kp_A+(1-k)p_E$. Likewise, $\hat{p}^k_{\udb}\colon\Lprop\rightarrow\mathbb{R}$  can be defined by $k\hat{p}_A+(1-k)\hat{p}_E$. Moreover, these policies commute with $\tr$, that is \begin{align*}\tr(\hat{p}^k_{\udb})={p}^k_{\udb}\text{ and }\trinv({p}^k_{\udb})=\hat{p}^k_{\udb}.\end{align*}
\end{lemma}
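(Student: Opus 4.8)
The plan is to exploit the fact that both $\nsset$ and $\fvset$ are \emph{convex} subsets of their ambient function spaces. Every one of the defining axioms is an affine constraint on the probability values: (A1) and (A2) are linear inequalities, (A3) is a linear equation, and likewise (D1) and (D4) are linear inequalities while (D2), (D3), (D5) and (D6) are linear equations. Since a convex combination of points satisfying a common system of linear equalities and weak inequalities satisfies the same system, it follows at once that $p^k_{\udb}=kp_A+(1-k)p_E$ lies in $\nsset$ and that $\hat{p}^k_{\udb}=k\hat{p}_A+(1-k)\hat{p}_E$ lies in $\fvset$ whenever $k\in[0,1]$. This establishes well-definedness.

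First I would spell this out for the non-standard case. For (A1), a convex combination of numbers in $[0;1]$ again lies in $[0;1]$. For (A2), if $\f\vDash_L\p$ then $p_A(\f)\le p_A(\p)$ and $p_E(\f)\le p_E(\p)$, and multiplying by the nonnegative weights $k$ and $1-k$ and adding preserves the inequality. For (A3), one computes $p^k_{\udb}(\f\wedge\p)+p^k_{\udb}(\f\vee\p)=k\bigl(p_A(\f\wedge\p)+p_A(\f\vee\p)\bigr)+(1-k)\bigl(p_E(\f\wedge\p)+p_E(\f\vee\p)\bigr)$, and replacing each bracket using (A3) for $p_A$ and $p_E$ yields $p^k_{\udb}(\f)+p^k_{\udb}(\p)$. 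The four-valued axioms (D1)--(D6) are handled identically, each being preserved coordinatewise under the affine combination; in particular (D2) relies on the weights $k$ and $1-k$ summing to one.

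For the commutation claim I would use that the translation map $\tr$ is \emph{linear}: by definition $\tr(\hat{p})(\f)=b_\f+c_\f$, a linear functional of the vector $\hat{p}(\f)$. Hence $\tr(\hat{p}^k_{\udb})(\f)=k\,\tr(\hat{p}_A)(\f)+(1-k)\,\tr(\hat{p}_E)(\f)$ for every $\f$. Under the standing identification of the two representations of each agent's beliefs, namely $p_A=\tr(\hat{p}_A)$ and $p_E=\tr(\hat{p}_E)$, the right-hand side equals $kp_A(\f)+(1-k)p_E(\f)=p^k_{\udb}(\f)$, giving $\tr(\hat{p}^k_{\udb})=p^k_{\udb}$. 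The companion identity $\trinv(p^k_{\udb})=\hat{p}^k_{\udb}$ then follows immediately by applying $\trinv$ to both sides and invoking $\trinv\circ\tr=id_{\fvset}$ from Theorem \ref{translthm}; alternatively one checks directly that $\trinv$ is affine and repeats the same computation.

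There is no genuine obstacle here: the whole argument reduces to the observation that the two axiom systems cut out convex sets and that $\tr$ is linear (and $\trinv$ affine). The only point demanding a little care is verifying that \emph{every} defining condition is indeed affine rather than, say, multiplicative; once this is checked, both preservation under convex combinations and commutation with $\tr$ are automatic.
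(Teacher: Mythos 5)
Your proof is correct; the paper states this lemma without proof, and your argument---convexity of the sets cut out by the affine constraints (A1)--(A3) and (D1)--(D6), plus linearity of $\tr$ and affinity of $\trinv$ (or, equivalently, the identity $\trinv\circ\tr=id_{\fvset}$ from Theorem \ref{translthm})---is exactly the routine verification the authors leave implicit. The one point worth making explicit, which you correctly flag, is that the commutation claim presupposes the identification $p_A=\tr(\hat{p}_A)$ and $p_E=\tr(\hat{p}_E)$ of each agent's two representations.
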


Non-Standard beliefs, however, allow for further aggregation policies that do not have classic counterparts. {\bf Credulous} agents, for instance, could opt for the maximal values of their input  in terms of  belief and disbelief simultaneously. That is, they could set their updated belief and disbelief in $\varphi$ to be $max(p_{A}(\f), p_{E}(\f))$ and $max(p_{A}(\neg\f), p_{E}(\neg\f))$ respectively. 
Likewise, {\bf cautious} agents may rather chose to belief and disbelief $\varphi$ only to an amount supported by all input information. Such agents would set their 
belief and disebelief in $\f$ to  $min(p_{A}(\f), p_{E}(\f))$ and $min(p_{A}(\neg\f), p_{E}(\neg\f))$ respectively.

In special situations, further policies are conceivable. When testing the safety of a new drug, for example, agents may be extremely vary of false positives while being much less concerned with false negatives. Such an agent might decide to set her new belief  in $\varphi$ to $min(p_{A}(\f), p_{E}(\f))$ while adopting $max(p_{A}(\neg\f), p_{E}(\neg\f))$ as new disbelief in $\f$. Likewise, also the combination of $max(p_{A}(\f), p_{E}(\f))$ with $min(p_{E}(\neg\f), p_{E}(\neg\f))$ are conceivable. In some sense, the latter two policies are aggregation functions that minimize type I and type II errors. For a lack of a better name we call these {\bf pessimist} and {\bf optimist} updating rules respectively. See Table \ref{ruletab} for an overview.
\begin{table}
\begin{tabular}{c|cc}
\ \ Updating rule\ \ &$p_{\udb}(\varphi)$&$p_{\udb}(\neg\varphi)$\\[.2em]
\hline
k-weighted&\ \ $kp_A(\varphi)+(1-k)p_E(\varphi)$\ \ &\ \ $kp_A(\neg\varphi)+(1-k)p_E(\neg\varphi)$\ \ \\[.2em]
credulous&$max(p_{A}(\f), p_{E}(\f))$&$max(p_{A}(\neg\f), p_{E}(\neg\f))$\\[.2em]
cautious&$min(p_{A}(\f), p_{E}(\f))$&$min(p_{A}(\neg\f), p_{E}(\neg\f))$\\[.2em]
optimist&$max(p_{A}(\f), p_{E}(\f))$&$min(p_{A}(\neg\f), p_{E}(\neg\f))$\\[.2em]
pessimist&$min(p_{A}(\f), p_{E}(\f))$&$max(p_{A}(\neg\f), p_{E}(\neg\f))$\\[.2em]
\end{tabular}
\caption{Different rules for aggregating agent $A$ and $E$'s non-standard beliefs in $\varphi$ and $\neg\varphi$, i.e. $p_A(\varphi),\ p_E(\varphi), p_A(\neg\varphi)$ and $p_E(\neg\varphi)$.}\label{ruletab}
\end{table}


\ignore{Notably, none of these four policies uniquely determine the updated four-valued belief  $\hat{p}_{\udb}(\varphi)=(b_\f,d_\f,u_\f,c_\f)$. Given axiom (D2)'s constraint that $b_\f+d_\f+u_\f+c_\f=1$, the four-valued vector $\hat{p}_{\udb}(\varphi)$ is determined by three parameters. All of the above policies, however, only proscribe  two, and hence do not uniquely determine $\hat{p}_{\udb}(\varphi)$. In contrast to the weighted averaging of Policy 1, the missing condition for credulous updates cannot be retrieved by applying the same policy (taking maxima) to the agent's assessments of conflicts about $\varphi$, i.e. to  $p_A(\varphi\wedge\neg\varphi)$ and $p_E(\varphi\wedge\neg\varphi)$. }

Unlike weighted average, none of these four policies can be applied to an entire belief set simultaneously.  

\begin{fact}\label{counterag}
Let $p_A$ and $p_E$ be such that $p_A(\f)=1$ and $p_A(\neg\f)=p_A(\f\wedge\neg\f)=0$, while  $p_E(\neg\f)=1$ and $p_E(\f)=p_E(\f\wedge\neg\f)=0$. Then $p_A$ and $p_E$ are consistent, but the function $p_\udb$  defined by $p_\udb(*)=max(*)$ for $*\in\{\varphi,\neg\varphi,\varphi\wedge\neg\varphi\}$ is not. 
\end{fact}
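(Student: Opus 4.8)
The plan is to establish the two conjuncts of the claim separately: first that $p_A$ and $p_E$ are genuine (i.e.\ consistent) non-standard probability assignments, and then that the credulous aggregate $p_\udb$ cannot be extended to one. For the first part I would take $\f$ to be a propositional atom, say $\f = p$, since an atom admits both a purely-true and a purely-false state. To witness $p_A$ I would invoke the representation result (Theorem \ref{repthm}) and concentrate all probability mass on the single canonical state $\sigma_A = \{p\}$, i.e.\ set $\mu(\{\sigma_A\}) = 1$. Reading off the induced assignment gives $p_A(\f) = \mu(|\f|^+) = 1$, $p_A(\neg\f) = \mu(|\f|^-) = 0$, and $p_A(\f\wedge\neg\f) = \mu(v^+(p)\cap v^-(p)) = 0$, the last because $\sigma_A$ contains $p$ but not $\neg p$. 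Symmetrically, concentrating mass on $\sigma_E = \{\neg p\}$ yields $p_E$ with $p_E(\neg\f)=1$ and $p_E(\f)=p_E(\f\wedge\neg\f)=0$. (Alternatively, one may simply observe that any classical Kolmogorov assignment satisfies (A1)--(A3), and that in the classical case $p(\f)=1$ already forces $p(\neg\f)=0$ and $p(\f\wedge\neg\f)=0$.)

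For the second part I would first compute the aggregate on the three relevant formulas: $p_\udb(\f)=\max(1,0)=1$, $p_\udb(\neg\f)=\max(0,1)=1$, and $p_\udb(\f\wedge\neg\f)=\max(0,0)=0$. Then, arguing by contradiction, suppose $p_\udb$ extended to some $p\in\nsset$. Applying the import-export axiom (A3) to the pair $\f,\neg\f$ gives $p(\f\wedge\neg\f)+p(\f\vee\neg\f)=p(\f)+p(\neg\f)$, whence $p(\f\vee\neg\f)=1+1-0=2$. This contradicts the normalization axiom (A1), which requires $p(\f\vee\neg\f)\le 1$. Hence no such extension exists and $p_\udb$ is inconsistent.

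The only delicate point --- and the main (mild) obstacle --- is the consistency direction: one must ensure the chosen $\f$ genuinely admits a purely-true and a purely-false state, for otherwise $p_A$ or $p_E$ could fail to be realizable (a contradiction such as $p\wedge\neg p$, for instance, is never purely true). Taking $\f$ atomic removes this difficulty entirely, so the substance of the argument reduces to the one-line computation of the second paragraph. The failure is driven precisely by the fact that the max-policy can simultaneously inflate belief in $\f$ and in $\neg\f$ to $1$ while leaving the glut $p(\f\wedge\neg\f)$ at $0$, which (A3) then forces to overflow into $p(\f\vee\neg\f)$ beyond the admissible unit bound.
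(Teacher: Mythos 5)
Your proposal is correct and follows essentially the same route as the paper: consistency is witnessed by concrete models concentrating mass on a purely-positive state for $p_A$ and a purely-negative state for $p_E$ (the paper uses a bespoke three-world model where you use canonical states $\{p\}$ and $\{\neg p\}$, which is the same idea), and inconsistency follows by plugging $p_\udb(\f)=p_\udb(\neg\f)=1$, $p_\udb(\f\wedge\neg\f)=0$ into (A3) to force $p_\udb(\f\vee\neg\f)=2$, contradicting (A1). Your explicit observation that $\f$ must be chosen so as to admit purely-true and purely-false states (e.g.\ atomic) is a worthwhile precision that the paper leaves implicit.
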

\begin{proof}
To see that $p_A$ and $p_E$ are consistent consider a nonstandard model with three worlds, $x,y,z$ and $v^+(p)=\{x,y\}, v^-(p)=\{x,z\}$. The measure $\mu_A$ putting all weight on $y$ is such that $p_{\mu_A}(*)=p_A(*)$ for  $*\in\{\varphi,\neg\varphi,\varphi\wedge\neg\varphi\}$, showing $p_A$ consistent by Lemma \ref{sclem}. Likewise, the measure $\mu_E$ putting all weight on $z$ shows $p_E$ consistent. For the inconsistency of $p_\udb$, finally, note that $p_\udb(\f\wedge\neg\f)=0$ and $p_\udb(\f)=p_\udb(\neg\f)=1$. Plugging these three values into (A3) yields $0+p_\udb(\f\vee\neg\f)=2$, contradicting (A1).
\end{proof}

Likewise, the missing conditions for cautious updates cannot be retrieved by extending the policy of taking minima to the agents' assessments of $\varphi\vee\neg\f$, as can be seen from the previous Fact. In particular, there is no counterpart to Lemma \ref{univlem} for credulous or cautious update. Neither can be performed for all $\f\in\Lprop$ simultaneously. 

Before proceeding to four-valued updating, we  compare the above policies to operations in non-probabilistic Belnap-Dunn logic. For this, recall the classic Belnap-Dunn bi-lattice of truth values $\BD$.
\ignore{We are hence in need of a different mechanism for choosing the missing parameter required to define $\hat{p}_{\udb}(\varphi)=(b_\f,d_\f,u_\f,c_\f)$ uniquely. 
Before discussing possible choices for this parameter, however, we compare the above policies to operations in non-probabilistic Belnap-Dunn logic. For this, recall the classic Belnap-Dunn bi-lattice of truth values $\BD$. }

\begin{center}
 \begin{tikzpicture}[>=stealth',relative]

    \node (U1) at (0,-1) {$\{0\}$};
     \node (U2) at (-1,0) {$\{\ \}$};
      \node (U3) at (1,0) {$\{1,0\}$};
       \node (U4) at (0,1) {$\{1\}$};

 \path[-,draw] (U1) to (U2);
 \path[-,draw] (U1) to (U3);
  \path[-,draw] (U2) to (U4);
   \path[-,draw] (U3) to (U4);
    
    \node (L1) at (-0.9,-1.5) {};
     \node (L2) at (0.9,-1.5) {};
      \node (L3) at (-1.5,-0.9) {};
       \node (L4) at (-1.5,0.9) {};
       
       \path[->,draw] (L1) to  node[below, sloped] {information}(L2); 
       \path[->,draw] (L3) to  node[above, sloped] {{truth}}(L4); 
       \end{tikzpicture}
\end{center}

This bi-lattice can be interpreted in two directions relating to  {\it truth values} and the available {\it information}. We denote meet and join of the truth lattice operations by $\wedge$ and $\vee$ while meet and join for the information lattice operations are $\lwedge$ and $\lvee$. Note that we can identify an assignment of $\BD$-values to some formula $\varphi$ with a non-standard probability assignment of $p(\f)$ and $p(\neg\f)$ into $\{0,1\}$. More specifically, assigning $\{1,0\}$ to some $\f$ corresponds to $p(\f)=p(\neg \f)=1$, while assigning $\{1\}$, resp $\{0\}$  to $\varphi$ corresponds to $p(\f)=1,\ p(\neg\f)=0$ and $p(\f)=0,\ p(\neg\f)=1$ respectively. Value $\{\}$, finally, corresponds to $p(\f)=p(\neg \f)=0$. For a probability assignment $p(\f),p(\neg\f)\in\{0,1\}$, we denote the corresponding $\BD$ value by  $t_p(\f)$. Applying this correspondence, we obtain the following characterization of the four updating policies introduced above:

\begin{lemma}
Assume when asked about their credences in $\varphi$, agents $A$ and $E$ provide extremal assignments, i.e. $p_A(\f),p_A(\neg\f),p_E(\f),p_E(\neg\f)\in\{0,1\}$. Then 
\begin{center}
\begin{tabular}{lll}
Credulous update& \multirow{ 2}{*}{\ \ \ yields beliefs in $\varphi$ and $\neg\varphi$ \ \ \ }& $t_{p_A}(\f)\lvee t_{p_E}(\f)$\\
Cautious update&\multirow{ 2}{*}{\ \ \ that are equal to}&$t_{p_A}(\f)\lwedge t_{p_E}(\f)$\\
Opimistic update&&$t_{p_A}(\f)\vee t_{p_E}(\f)$\\
Pessimistic update&&$t_{p_A}(\f)\wedge t_{p_E}(\f)$.

\end{tabular}
\end{center}
\end{lemma}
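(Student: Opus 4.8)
The plan is to reduce all four updating rules to coordinate-wise $\min$/$\max$ operations on a pair of bits, and then to recognize the four bilattice operations as exactly those coordinate-wise operations. First I would make explicit the bit encoding implicit in the correspondence $t_p$: for an extremal assignment, identify the value $t_p(\f)\in\mathcal{P}(\{0,1\})$ with the pair $(p(\f),p(\neg\f))\in\{0,1\}^2$, where the first coordinate records membership of $1$ (``$\f$ is true'') and the second records membership of $0$ (``$\f$ is false''). Under this identification $\{1\}=(1,0)$, $\{0\}=(0,1)$, $\{1,0\}=(1,1)$ and $\{\}=(0,0)$, precisely as prescribed by the stated correspondence.

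Next I would express the two lattice orders coordinate-wise. The information order on $\BD$ is subset inclusion, so $(t,f)\le_i(t',f')$ iff $t\le t'$ and $f\le f'$; consequently the information join $\lvee$ (set union) and the meet $\lwedge$ (set intersection) act as coordinate-wise $\max$ and $\min$ on both bits. The truth order, by contrast, makes a value ``truer'' by gaining $1$ and losing $0$, so $(t,f)\le_t(t',f')$ iff $t\le t'$ and $f\ge f'$; hence the truth join $\vee$ takes $\max$ on the truth bit and $\min$ on the falsity bit, while the truth meet $\wedge$ takes $\min$ on the truth bit and $\max$ on the falsity bit. Each of these four facts can be read off directly from the Hasse diagram of $\BD$ (four short verifications, or the single observation that both orders are products of linear orders on the two coordinates, with the falsity coordinate reversed for the truth order).

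Finally I would rewrite the four rules of Table \ref{ruletab} in these bit-coordinates and match them. Writing $t_A=p_A(\f)$, $f_A=p_A(\neg\f)$ and similarly for $E$, the credulous rule returns $(\max(t_A,t_E),\max(f_A,f_E))=t_{p_A}(\f)\lvee t_{p_E}(\f)$; the cautious rule returns $(\min(t_A,t_E),\min(f_A,f_E))=t_{p_A}(\f)\lwedge t_{p_E}(\f)$; the optimist rule returns $(\max(t_A,t_E),\min(f_A,f_E))=t_{p_A}(\f)\vee t_{p_E}(\f)$; and the pessimist rule returns $(\min(t_A,t_E),\max(f_A,f_E))=t_{p_A}(\f)\wedge t_{p_E}(\f)$. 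Since the inputs are extremal, each aggregate pair $(p_\udb(\f),p_\udb(\neg\f))$ again lies in $\{0,1\}^2$ and hence is itself a well-defined $\BD$-value, which is exactly what the four claims assert.

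The computations are routine; the only place demanding care is fixing the direction of the truth order correctly, i.e.\ remembering that $p(\neg\f)$ tracks the $0$-membership (falsity) coordinate and that truth increases as this coordinate decreases. This is the single point where a sign slip would interchange optimist with pessimist, or $\vee$ with $\wedge$, so I would pin down the two order descriptions explicitly before matching rules to operations.
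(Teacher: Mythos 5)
Your proof is correct. The paper states this lemma without any proof, so there is no official argument to compare against; your reduction --- encoding each extremal assignment as the bit pair $(p(\f),p(\neg\f))$, describing the information and truth orders coordinate-wise (with the falsity coordinate reversed in the truth order), and matching each row of the aggregation table to the corresponding coordinate-wise $\min$/$\max$ --- is the natural direct verification the authors evidently intend, and it is complete.
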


Finally, we consider the special case where both agents input {\it classic} probability values, i.e. values such that $p(\f)+p(\neg\f)=1$.\\

\begin{fact}When $p_A$ and $p_E$ are classic, i.e. $p_A(\f)+p_A(\neg\f)=p_E(\f)+p_E(\neg\f)=1$, then the same holds for the aggregated belief when aggregation follows weighted averaging, optimistic or pessimistic updates. That is, these three rules preserve classicality. This does not hold for credulous and cautious updating. The latter two rules turn classic inputs beliefs for agent $A$ and $E$ into non-classic aggregate values as soon as $A$ and $E$ disagree about $p(\f)$. 
\end{fact}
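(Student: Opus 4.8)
The plan is to reduce everything to the single classicality identity $p(\neg\f) = 1 - p(\f)$, which holds for both agents by hypothesis, and then to exploit the fact that the map $x \mapsto 1-x$ is order-reversing on $[0;1]$ and therefore interchanges minima and maxima. Writing $a := p_A(\f)$ and $e := p_E(\f)$, classicality gives $p_A(\neg\f) = 1-a$ and $p_E(\neg\f) = 1-e$, so each of the five rules in Table \ref{ruletab} can be rewritten purely in terms of $a$ and $e$. Establishing or refuting classicality of the aggregate then amounts to computing $p_\udb(\f) + p_\udb(\neg\f)$ in each case and checking whether the sum equals $1$.

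For the three classicality-preserving rules I would simply verify the sum. Weighted averaging is immediate, since $[ka + (1-k)e] + [k(1-a) + (1-k)(1-e)] = k + (1-k) = 1$. For the optimistic rule the key step is the identity $\min(1-a,\,1-e) = 1 - \max(a,e)$, whence $\max(a,e) + \min(1-a,1-e) = 1$; the pessimistic rule uses the dual identity $\max(1-a,\,1-e) = 1 - \min(a,e)$. Both identities are instances of the order-reversing observation above and constitute the conceptual heart of the positive half of the statement.

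For the two failing rules the same identities deliver the quantitative failure directly, with no ad-hoc counterexample needed. Credulous aggregation yields $\max(a,e) + \max(1-a,1-e) = \max(a,e) + 1 - \min(a,e) = 1 + |a-e|$, while cautious aggregation yields $\min(a,e) + \min(1-a,1-e) = 1 - |a-e|$. Thus the aggregate sum deviates from $1$ by exactly $|a-e| = |p_A(\f) - p_E(\f)|$, which is strictly positive precisely when the agents disagree about $p(\f)$. This simultaneously shows that classicality fails and identifies the exact size and direction of the failure (a glut-like excess above $1$ for credulous, a gap-like deficit below $1$ for cautious).

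I expect no serious obstacle: once negations are eliminated via classicality, the entire statement collapses to a short computation. The only point demanding care is to record the two order-reversing identities $\min(1-a,1-e) = 1-\max(a,e)$ and $\max(1-a,1-e) = 1-\min(a,e)$ explicitly and to apply the correct one in each case, since mixing them up would spuriously turn a preservation result into a failure or vice versa.
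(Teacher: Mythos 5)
Your proposal is correct and complete: the paper states this Fact without proof, and your computation via the order-reversing identities $\min(1-a,1-e)=1-\max(a,e)$ and $\max(1-a,1-e)=1-\min(a,e)$ is exactly the natural verification, matching the rule assignments in Table \ref{ruletab}. The explicit deviations $1+|a-e|$ (credulous) and $1-|a-e|$ (cautious) moreover establish the ``as soon as $A$ and $E$ disagree'' clause quantitatively, which is slightly more than the paper claims.
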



\subsection{Aggregating four-valued probabilities.}
So far, we have assumed aggregation to operate on non-standard probability assignments. Within the above framework, agents provide  their subjective non-standard beliefs in both $\varphi$ and $\neg\f$,  which the various aggregative mechanisms described above then merge into aggregate belief values for $\varphi$ and $\neg\varphi$. But of course, our agents might also  provide their subjective four-valued probabilities $\hat{p}_A(\f)=(b^A_\f,d^A_\f,u^A_\f,c^A_\f)$ and $\hat{p}_E(\f)=(b^E_\f,d^E_\f,u^E_\f,c^AE_\f)$ instead. Naturally, we could then hope to obtain an aggregate four-valued probability \begin{align*}\hat{p}_\udb(\f)=(b^\udb_\f,d^\udb_\f,u^\udb_\f,c^\udb_\f).\end{align*}

Note, that by the map $\tr$, the non-standard probabilities $p(\f)$ and $p(\neg\f)$ can be calculated from the four-valued probability $\hat{p}(\f)$. Hence, if $\hat{p}_\udb(\f)$ is defined, a corresponding two-valued aggregation mechanism for $p_\udb(\f)$ and $p_\udb(\neg\f)$ follows immediately. However, the opposite does not hold. $p_\udb(\f)$ and $p_\udb(\neg\f)$ do not fully determine $\hat{p}_\udb(\f)$ and hence the various policies defined in the last section do not readily translate into four-valued aggregation procedures. 
In fact, when employing the map $\trinv$, the three values $p(\f)$,  $p(\neg\f)$ and $p(\f\wedge\neg\f)$ are required to determine $\hat{p}(\f)$. In the case of weighted averaging, this is not a problem. By Lemma \ref{univlem}, setting \begin{align*}p^k_\udb(\f\wedge\neg\f)=kp^k_A(\f\wedge\neg\f)+(1-k)p^k_E(\f\wedge\neg\f)\end{align*} yields a consistent set of requirements and the corresponding four-valued aggregation rule is exactly $\hat{p}^k_\udb(\f)=k\hat{p}_A(\f)+(1-k)\hat{p}_E(\f)$.

However, the situation is different in the case of credulous or cautious updating. As shown in Fact \ref{counterag}, requiring that $p_\udb(\f)=max(p_A(\f),p_E(\f))$, $p_\udb(\neg\f)=max(p_A(\neg\f),p_E(\neg\f))$ and  $p_\udb(\f\wedge\neg\f)=max(p_A(\f\wedge\neg\f),p_E(\f\wedge\neg\f))$ may yield an {\it inconsistent} set of requirements. Hence, other choices are needed. 

The  vector $p_\udb(\f)$ is determined by four choices. With two of them given by $p_\udb(\f)=max(p_A(\f),p_E(\f))$ and $p_\udb(\neg\f)=max(p_A(\neg\f),p_E(\neg\f))$, and a third by axiom (D2), one last condition is missing. In the case of credulous update, we would arguably expect that $c_\f^\udb\geq max(c^A_\f,c^E_\f)$: If an agent   opts to be credulous about  both $\varphi$ and $\neg\varphi$, she could not expect her conflict to fall below any of the input conflicts. Within this restriction, the below definition of credulous update,  assumes $c_\f^\udb$ to be as close to  $max(c^A_\f,c^E_\f)$ as possible while maintaining consistency. 

Likewise, in the case of cautious update, we would arguably expect overall uncertainty to grow, or, at least, not to shrink through aggregation. That is, we would expect that $u_\f^\udb\geq max(u^A_\f,u^E_\f)$. Again, We will demand that $u_\f^\udb$ is the maximal possible consistent value with this property.

\begin{definition}
Assume agents $A$ and $E$ provide four-valued probabilities $\hat{p}_A(\f)=(b^A_\f,d^A_\f,u^A_\f,c^A_\f)$ and $\hat{p}_E(\f)=(b^E_\f,d^E_\f,u^E_\f,c^AE_\f)$. Then the {\bf credulously aggregated four-valued probability} $\hat{p}_\udb(\f)=(b^\udb_\f,d^\udb_\f,u^\udb_\f,c^\udb_\f)$  is given by the following four conditions 
\begin{center}
\begin{tabular}{l}
$b^\udb_\f+c_\f^\udb=max(b_\f^A+c_\f^A,b_\f^E+c_\f^E)$  \\
$d^\udb_\f+c_\f^\udb=max(d_\f^A+c_\f^A,d_\f^E+c_\f^E)$  \\
$b^\udb_\f+d^\udb_\f+u^\udb_\f+c^\udb_\f=1$\\
$c^\udb_\f=max\left(c^E_\f,\ c^A_\f,\ (b^\udb_\f+c_\f^\udb)+(d^\udb_\f+c_\f^\udb)-1\right)$
\end{tabular}
\end{center}
\end{definition}
By $\tr$, the first two of these equations correspond to the two conditions of credulous non-standard updates, i.e.\ $p_\udb(\varphi)=max(p_A(\varphi), p_E(\varphi))$ and $p_\udb(\neg\varphi)=max(p_A(\neg\varphi),p_E(\neg\varphi))$. The third equation is axiom (D2). The last equation, finally expresses that $c^\udb_\f$ is the minimal consistent choice such that $c_\f^\udb\geq max(c^A_\f,c^E_\f)$. To see this, note that by (D2), we have $b^\udb_\f+d^\udb_\f+c_\f^\udb\leq 1$ and hence \begin{align*}b^\udb_\f+c_\f^\udb+d^\udb_\f+c_\f^\udb-1\leq c_\f^\udb.\end{align*}

\noindent Likewise we can define a cautious aggregation of four-valued probabilities:

\begin{definition}
For $\hat{p}_A$ and $\hat{p}_E$ as above, the {\bf cautiously aggregated four-valued probability} $\hat{p}_\udb(\f)=(b^\udb_\f,d^\udb_\f,u^\udb_\f,c^\udb_\f)$ is given by  the following four equations
\begin{center}
\begin{tabular}{l}
$b^\udb_\f+c_\f^\udb=min(b_\f^A+c_\f^A,b_\f^E+c_\f^E)$  \\
$d^\udb_\f+c_\f^\udb=min(d_\f^A+c_\f^A,d_\f^E+c_\f^E)$  \\
$b^\udb_\f+d^\udb_\f+u^\udb_\f+c^\udb_\f=1$\\
$u^\udb_\f=max\left(u^E_\f,\ u^A_\f,\ 1-(b^\udb_\f+c_\f^\udb)-(d^\udb_\f+c_\f^\udb)\right)$
\end{tabular}
\end{center}
\end{definition}

 \noindent Credulous and cautious aggregation as defined here cohere with their definition for non-standard probabilities.

\begin{lemma}
Assume that agents $A$ and $E$ provide four-valued vectors $\hat{p}_A$ and $\hat{p}_E$ respectively. Then the following diagrams commute, where the application of $\tr$ makes use of the fact that $p(\varphi)$ and $p(\neg\f)$  can be calculated from $\hat{p}(\f)$.

\begin{center}
 \begin{tikzpicture}[>=stealth',relative]

    \node (U1) at (0,3) {\begin{tabular}{l}$\hat{p}_A(\varphi)$\\$\hat{p}_E(\varphi)$\end{tabular}};
     \node (U2) at (0,0) {$\hat{p}_\udb(\varphi)$};
      \node (U3) at (4,3) {\begin{tabular}{l}	$p_A(\varphi),p_A(\neg\f)$\\$p_E(\f),p_E(\neg\f)$\end{tabular}};
       \node (U4) at (4,0) {$p_\udb(\varphi)$};

 \path[->,draw] (U1) to  node[below, sloped] {$\tr$}(U3);
  \path[->,draw] (U1) to  node[below, sloped] {credulous} (U2);
  \path[->,draw] (U3) to  node[below, sloped] {credulous}(U4);
   \path[->,draw] (U2) to  node[below, sloped] {$\tr$}(U4);
    
    \node (L1) at (7,3) {\begin{tabular}{l}$\hat{p}_A(\varphi)$\\$\hat{p}_E(\varphi)$\end{tabular}};
     \node (L2) at (7,0) {$\hat{p}_\udb(\varphi)$};
      \node (L3) at (11,3) {\begin{tabular}{l}	$p_A(\varphi),p_A(\neg\f)$\\$p_E(\f),p_E(\neg\f)$\end{tabular}};
       \node (L4) at (11,0) {$p_\udb(\varphi)$};

 \path[->,draw] (L1) to  node[below, sloped] {$\tr$}(L3);
 \path[->,draw] (L1) to  node[below, sloped] {cautious} (L2);
  \path[->,draw] (L3) to  node[below, sloped] {cautious}(L4);
   \path[->,draw] (L2) to  node[below, sloped] {$\tr$}(L4);
       \end{tikzpicture}

\end{center}
\end{lemma}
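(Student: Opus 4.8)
The plan is to reduce both commuting squares to two scalar identities, one for the $\varphi$-component and one for the $\neg\varphi$-component of the aggregated non-standard probability. Throughout, write $\ast$ for $\max$ in the credulous diagram and for $\min$ in the cautious one. The first thing I would record is how $\tr$ reads off non-standard values from a four-valued vector: by definition $\tr(\hat p)(\varphi)=b_\varphi+c_\varphi$, and by axiom (D3) (using $b_{\neg\varphi}=d_\varphi$, $c_{\neg\varphi}=c_\varphi$) also $\tr(\hat p)(\neg\varphi)=d_\varphi+c_\varphi$. Applied to the inputs this gives $p_A(\varphi)=b^A_\varphi+c^A_\varphi$, $p_A(\neg\varphi)=d^A_\varphi+c^A_\varphi$, and likewise for $E$, so that the ``$\tr$ then aggregate'' path of each square produces
\[
p_\udb(\varphi)=\ast\!\left(b^A_\varphi+c^A_\varphi,\ b^E_\varphi+c^E_\varphi\right),\qquad
p_\udb(\neg\varphi)=\ast\!\left(d^A_\varphi+c^A_\varphi,\ d^E_\varphi+c^E_\varphi\right).
\]

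For the opposite ``aggregate then $\tr$'' path I would apply the same two identities to $\hat p_\udb$, giving $\tr(\hat p_\udb)(\varphi)=b^\udb_\varphi+c^\udb_\varphi$ and $\tr(\hat p_\udb)(\neg\varphi)=d^\udb_\varphi+c^\udb_\varphi$. But these two sums are exactly the left-hand sides of the first two defining equations of the credulous (resp.\ cautious) four-valued aggregate, whose right-hand sides are $\ast(b^A_\varphi+c^A_\varphi,b^E_\varphi+c^E_\varphi)$ and $\ast(d^A_\varphi+c^A_\varphi,d^E_\varphi+c^E_\varphi)$. Comparing with the display above, the two paths agree on both $\varphi$ and $\neg\varphi$, so each square commutes. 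The conceptual point, which I would state explicitly, is that the first two conditions in each aggregation definition were chosen precisely to be the $\tr$-images of the non-standard $\max/\min$ conditions, so commutativity is built in by design and requires no computation beyond unfolding $\tr$.

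The genuine obligation is well-posedness: the diagram only makes sense if $\hat p_\udb(\varphi)$ is an honest element of $[0;1]^4$ summing to $1$, so that $\tr(\hat p_\udb)$ is defined. I would solve the four defining equations, treating $B:=b^\udb_\varphi+c^\udb_\varphi$ and $D:=d^\udb_\varphi+c^\udb_\varphi$ as the (already fixed) $\ast$-values, to obtain $b^\udb_\varphi=B-C$, $d^\udb_\varphi=D-C$, $u^\udb_\varphi=1-B-D+C$ and $c^\udb_\varphi=C$, where $C$ is supplied by the fourth equation; the sum is $1$ automatically, leaving four non-negativity inequalities to verify. In the credulous case $C=\max(c^A_\varphi,c^E_\varphi,B+D-1)$, and $b^\udb_\varphi,d^\udb_\varphi\ge 0$ follow from $c^A_\varphi\le b^A_\varphi+c^A_\varphi\le B$ and $c^A_\varphi\le d^A_\varphi+c^A_\varphi\le D$ (and likewise for $E$) together with $B,D\le 1$, while $u^\udb_\varphi\ge 0$ is exactly the clause $C\ge B+D-1$ built into the definition of $C$. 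The cautious case is the mirror image: there $u^\udb_\varphi=\max(u^A_\varphi,u^E_\varphi,1-B-D)$ is given directly and $c^\udb_\varphi=B+D+u^\udb_\varphi-1$, with $b^\udb_\varphi,d^\udb_\varphi\ge 0$ following from $u^A_\varphi\le 1-(d^A_\varphi+c^A_\varphi)\le 1-D$ and its variants. I expect this bookkeeping to be the only laborious part; once it is in place the commutativity itself is immediate.
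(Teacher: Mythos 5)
Your proof is correct and follows exactly the route the paper intends: the paper states this lemma without proof, but its remark immediately after the definition of credulous aggregation (``By $\tr$, the first two of these equations correspond to the two conditions of credulous non-standard updates\dots'') is precisely your observation that commutativity is built in by construction, since $\tr(\hat p_\udb)(\varphi)=b^\udb_\varphi+c^\udb_\varphi$ and $\tr(\hat p_\udb)(\neg\varphi)=d^\udb_\varphi+c^\udb_\varphi$ are the left-hand sides of the first two defining equations. Your well-posedness check (solving for $b^\udb_\varphi=B-C$, $d^\udb_\varphi=D-C$, $u^\udb_\varphi=1-B-D+C$ and verifying the four non-negativity constraints, with the third branch of the $\max$ supplying exactly $u^\udb_\varphi\geq 0$ resp.\ $c^\udb_\varphi\geq 0$) is a genuine and correct addition that the paper leaves implicit.
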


\subsubsection*{The algebraic structure of credulous and cautious aggregation. }
\begin{definition}
For an aggregation strategy $S$, we call $\f$ a neutral element if for all $\p$ we have  
$$S(\f, \p) =S(\p, \f)= \p,$$
and we call $\f$ an anihilator if for all $\p $
$$S(\f, \p) =S(\p, \f)= \f.$$
\end{definition}

\begin{proposition}
The subjective four-valued probability assignment $(0,0,0,1)$, i.e. the element of maximal conflict, is an anihilator with respect to credulous updating.  Likewise, the  subjective four-valued probability assignment $(0,0,1,0)$, representing maximal uncertainty, is an anihilator for the cautious strategy.
\end{proposition}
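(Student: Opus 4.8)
The plan is to verify the annihilator identity directly from the four defining equations of each aggregation strategy, exploiting that axioms (D1)--(D2) force every partial sum $b+c$ and $d+c$ to lie in $[0,1]$. Since both $\max$ and $\min$ are symmetric in their two agent-arguments, the two equalities $S(\hat{p}_A,\hat{p}_E)=S(\hat{p}_E,\hat{p}_A)$ demanded by the definition of annihilator hold automatically; it therefore suffices to compute the aggregate once, placing the extremal vector in (say) agent $A$'s slot and letting $\hat{p}_E(\f)=(b^E_\f,d^E_\f,u^E_\f,c^E_\f)$ range over all admissible inputs. In each case I would simply read off the unique vector satisfying the four defining conditions.

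For credulous updating I would set $\hat{p}_A(\f)=(0,0,0,1)$, so that $b^A_\f+c^A_\f=1$ and $d^A_\f+c^A_\f=1$. By (D1)--(D2) the second input satisfies $b^E_\f+c^E_\f\le 1$ and $d^E_\f+c^E_\f\le 1$, so the first two defining equations give $b^\udb_\f+c^\udb_\f=\max(1,\,b^E_\f+c^E_\f)=1$ and likewise $d^\udb_\f+c^\udb_\f=1$. Substituting into the fourth equation yields $c^\udb_\f=\max(c^E_\f,\,c^A_\f,\,(b^\udb_\f+c^\udb_\f)+(d^\udb_\f+c^\udb_\f)-1)=\max(c^E_\f,\,1,\,1)=1$, using $c^E_\f\le 1$ and $c^A_\f=1$. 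From $c^\udb_\f=1$ the two partial-sum equations force $b^\udb_\f=d^\udb_\f=0$, and then (D2) forces $u^\udb_\f=0$. Hence $\hat{p}_\udb(\f)=(0,0,0,1)$, as claimed.

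For cautious updating the computation is dual. Setting $\hat{p}_A(\f)=(0,0,1,0)$ gives $b^A_\f+c^A_\f=0$ and $d^A_\f+c^A_\f=0$, while nonnegativity (D1) gives $b^E_\f+c^E_\f\ge 0$ and $d^E_\f+c^E_\f\ge 0$. The first two defining equations then give $b^\udb_\f+c^\udb_\f=\min(0,\,b^E_\f+c^E_\f)=0$ and $d^\udb_\f+c^\udb_\f=0$, which by nonnegativity already forces $b^\udb_\f=c^\udb_\f=d^\udb_\f=0$. The fourth equation gives $u^\udb_\f=\max(u^E_\f,\,u^A_\f,\,1-(b^\udb_\f+c^\udb_\f)-(d^\udb_\f+c^\udb_\f))=\max(u^E_\f,\,1,\,1)=1$ since $u^E_\f\le 1$ and $u^A_\f=1$, consistently with (D2). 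Thus $\hat{p}_\udb(\f)=(0,0,1,0)$.

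I do not expect a genuine obstacle: the content is entirely a matter of substituting the extremal vector and reading off the consequences of the defining equations. The single point that warrants care is the role of the fourth defining equation in each case --- it is precisely the clause pinning down the conflict (resp.\ uncertainty) coordinate, and without the bounds $c^E_\f\le 1$ (resp.\ $u^E_\f\le 1$) supplied by (D1)--(D2), the relevant $\max$ would not collapse to $1$. I would also flag explicitly that the symmetry of $\max$ and $\min$ discharges the second equality in the definition of annihilator, so no separate computation of $S(\hat{p}_E,\hat{p}_A)$ is needed.
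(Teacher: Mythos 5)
Your proof is correct and follows essentially the same route as the paper: substitute the extremal vector into the defining equations and read off the aggregate, using (D1)--(D2) to collapse the $\max$/$\min$. The only cosmetic difference is that in the credulous case the paper pins down $C=1$ directly from $B+C=D+C=1$ together with (D1)--(D2), without invoking the fourth defining equation as you do; both derivations are valid.
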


\begin{proof}
Let   $\hat{p}_A(\f)=(0,0, 0,1)$, let $\hat{p}_E(\f)$ be arbitrary and denote the result of credulous updating by  $(B, D, U, C)$. Then by definition $B+C=D+C=1$ which, together with (D1) and (D2) implies $C=1$, $B=D=0$, and hence $U=0$. In a similar manner, let  $\hat{p}_A(\p)=(0,0, 1,0)$ and let  $\hat{p}_E(\p)$ be arbitrary, and denote the result of cautious updating by $(B', D', C', U')$.  then by definition $B'+C'= D'+C'=0$, which implies $B'=D'=C'= 0$ and hence $U'=1$.

\end{proof}

\begin{proposition}
The subjective four-valued probability assignment $(0,0,0,1)$, i.e. the element of maximal conflict, is a neutral element with respect to cautious updating.  Likewise, the  subjective four-valued probability assignment $(0,0,1,0)$, representing maximal uncertainty, is neutral with respect to credulous updating.\end{proposition}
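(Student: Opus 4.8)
The plan is to verify the two defining sets of equations directly, feeding the candidate neutral element in as one agent's assessment and checking that the aggregate reproduces the other agent's arbitrary input. A useful preliminary observation is that both the credulous and the cautious aggregation rules are symmetric in the two agents: every clause is built from $\max$ or $\min$ of the two inputs together with the normalization (D2), all of which are invariant under swapping $A$ and $E$. Consequently $S(\hat{p}_A,\hat{p}_E)=S(\hat{p}_E,\hat{p}_A)$ holds automatically, and it suffices in each case to set $\hat{p}_A(\f)$ equal to the extremal vector, let $\hat{p}_E(\f)=(b,d,u,c)$ be arbitrary with $b+d+u+c=1$ by (D2), and show the aggregate equals $(b,d,u,c)$.

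For the cautious case I would take $\hat{p}_A(\f)=(0,0,0,1)$. The two ``total belief'' clauses use $\min$; since $b^A_\f+c^A_\f=1\ge b+c$ and $d^A_\f+c^A_\f=1\ge d+c$, these collapse to $b^\udb_\f+c^\udb_\f=b+c$ and $d^\udb_\f+c^\udb_\f=d+c$. Next I would evaluate the uncertainty clause $u^\udb_\f=\max(u^A_\f,u^E_\f,\,1-(b^\udb_\f+c^\udb_\f)-(d^\udb_\f+c^\udb_\f))$; substituting the two totals and using $1-b-d-c=u$ from (D2), the third argument equals $u-c$, so $u^\udb_\f=\max(0,u,u-c)=u$. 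Finally, solving the linear system consisting of the two total-belief equations, $u^\udb_\f=u$, and normalization of the aggregate forces $c^\udb_\f=c$, and hence $b^\udb_\f=b$ and $d^\udb_\f=d$, giving $\hat{p}_\udb(\f)=(b,d,u,c)=\hat{p}_E(\f)$ as required.

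The credulous case with $\hat{p}_A(\f)=(0,0,1,0)$ is entirely parallel, now with $\max$ in the total-belief clauses. Since $b^A_\f+c^A_\f=0$ and $d^A_\f+c^A_\f=0$, the maxima again collapse to $b+c$ and $d+c$. The conflict clause $c^\udb_\f=\max(c^A_\f,c^E_\f,\,(b^\udb_\f+c^\udb_\f)+(d^\udb_\f+c^\udb_\f)-1)$ then has third argument $b+d+2c-1=c-u$ by (D2), so $c^\udb_\f=\max(0,c,c-u)=c$; reading off $b^\udb_\f=b$, $d^\udb_\f=d$ and $u^\udb_\f=u$ from normalization closes the case.

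The only step requiring genuine care is the simplification of the consistency-guaranteeing third argument inside the $\max$ defining $u^\udb_\f$ (cautious) or $c^\udb_\f$ (credulous): one must confirm that this term does not dominate, so that the extra clause stays inert. For the cautious rule this is the inequality $u-c\le u$ and for the credulous rule $c-u\le c$, both of which hold precisely because the complementary coordinate is non-negative by (D1). It is exactly this non-domination that lets the extremal input behave as a neutral element rather than perturbing the aggregate.
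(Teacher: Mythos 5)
Your proposal is correct and follows essentially the same route as the paper's own proof: fix the extremal vector as one input, note that the $\min$/$\max$ total-belief clauses collapse to the other agent's totals, show the third argument of the remaining $\max$ clause reduces to $u-c$ (resp.\ $c-u$) and is therefore dominated, and then solve the resulting linear system together with normalization. The only differences are cosmetic — you make the symmetry of the rules explicit and you work out the credulous case in full where the paper appeals to ``a similar argument'' — so nothing further is needed.
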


\begin{proof}
Let   $\hat{p}_A(\f)=(0,0, 0,1)$, let $\hat{p}_E(\f)=(b,d, u,c)$ be arbitrary and denote the result of cautious updating by  $(B, D, U, C)$. Then by definition $B+C=b+c$ and $D+C=d+c$ which implies 
\begin{equation}\label{eq1}
B+D+2C= b+d + 2c.
\end{equation} 
Using this, the last condition of cautious updating yields $U=max(0,\ u,\ 1-b-d-2c)$. Since $u=1-b-d-c$, this implies $U=u$. Together with $1=U+B+C+D=u+b+c+d$, it follows that $B+C+D=b+c+d$. In combination with equation (\ref{eq1}), this implies $C=c$. With this, $B+C=b+c$ and $D+C=d+c$ imply that $B=b$ and $D=d$. The proof for the second claim follows from a similar argument. 

\end{proof}

\section{Conclusions}\label{concl}

Many classical approaches to  reasoning address idealized situations, where the agents' information is consistent, closed under  logical implication, and possibly even complete. These assumptions, of course, are at odds with many realistic reasoning scenarios, where the available evidence may be scarce and memory or observation faulty. 
In short, there is no guarantee for our available information to be consistent, nor complete. Yet, we would arguably hold that {\it some} valid inferences can be drawn from such imperfect information, as partial incompleteness or local contradictions may not preclude us from drawing  conclusions about  other parts of the data.  
As automated reasoning systems are becoming increasingly important, there is a need for a rigorous formal treatment of inferences from non-ideal information.
To this end, a wealth of  non-classical logical systems for dealing with uncertainty or conflict has been put forward, with Belnap-Dunn logic (BD) arguably the most prominent such framework. 

However, the reasons for moving to non-normal, BD like frameworks apply equally well to probabilistic settings. Agents may, for instance, have inconclusive, probabilistic evidence for the truth or falsity of various statements. Just as in the classic case, if such information comes from different sources or different experiments, it needs not add up to 1, nor be mutually exclusive. It hence seems natural to investigate probabilistic extensions of BD.  This was the focus of the current paper. 

Paralleling recent work by \citeauthor{DunnProb} \citep[cf.][]{DunnProb,dunn2019contradictory}, we have investigated four-valued probability assignments that permit agents to have probabilistic beliefs about the truth and falsity of a statement, and about its gaps and gluts. More specifically, we have provided a theory of four-valued probabilities that slightly departs from Dunn's in its treatment of conjunctions. Yet, both  are generalizations of Belnap-Dunn logic in that they coincide with BD whenever all probabilities are extremal, i.e. only assume the values of 0 and 1. 

In this paper, we have clarified the connection between our four-valued probabilities and  single valued non-standard probabilities as introduced by Childers, Majer and Milne (\citeyear{childers_at_al}). By providing a translation function between the two approaches, we have shown these to be equivalent. Moreover, we have introduced probabilistic models  as semantics for four-valued probabilities, and have provided a sound and complete axiomatization  with respect to the class of all such models.   Lastly, we have enriched our frameworks with dynamical operations for updating and aggregation. As for the former, we have provided versions of Jeffrey and Bayes' conditioning that work in non-standard and four-valued settings and have clarified the relation between these.  For aggregation, finally,  we have studied a host of different aggregation policies, some of which go beyond what is available in classic probabilistic settings.



Of course, there are other approaches to weakening classic probability theory, not all of which have a corresponding logic as starting point. Many such approaches take probability or weights as central notion, but consider various cases where no exact probabilistic information is available. A typical example are  inner measures  intended to approximate probability from below \citep{faginhalpern}. Their underlying idea, briefly, is that an agent might lack  probabilistic evidence about some proposition $\f$, for instance when $\f$ is not in the algebra of (possible) observations. 
The agent may, though,  estimate a lower bound for the probability of $\f$ by building on her available  information about other propositions. 
Formally, this gives rise to an inner measures that  only satisfy  super-additivity instead of the classic additivity, i.e.  $\mu_*(\f \lor \p)\geq \mu_*(\f) + \mu_*(\p) $, where $\f \land \p$ a classical contradiction.


 A related weakening of classic probability theory is   Dempster-Shafer (DS) theory of belief  \citep{shafer,halpern2017}. 
The starting point of this theory is an agent's  evidence about some state of affairs, usually represented as a normalized measure on a boolean algebra of possible observations.  This evidence then gives rise to a belief function, where $Bel(\f)$, the belief in some  $\f$, is derived from all pieces of evidence that entail $\f$. As the agent might have strong evidence for a compound event, say $\p\vee \f$, without having much evidence that entails either of its compounds alone, this belief function is super-additive in the sense defined above. More specifically, the  degree of support for some $A$ needs not be complementary to the support of $\neg A$. That is,  $Bel(A)$ may be less than $1-Bel (\neg A)$,  just as in our framework. While $Bel(A)$ can be seen as  a lower bound for the classical probability for $A$, the term $1-Bel(\neg A)$, sometimes denoted the plausibility of $A$, is it's upper bound. The interval between both is then interpreted as the agent's uncertainty about $A$. As our presentation suggests, there is a tight connection between DS theory and inner measures approaches: both are equivalent, at least on a syntactic level where probabilities are associated to formulas, rather than states  \citep{faginhalpern,Zhou}.  
 
Both, inner probabilitiy approaches and DS theory differ in two ways from our framework. In one dimension, our framework is more general than DS belief functions or inner probabilities, as it admits not only for uncertainty but also for conflict in probability assignments. By allowing for gluts, non-standard and four-valued probability assignments can represent contradictory information in ways that DS theory and inner measure frameworks cannot. 

For a second difference consider a classic tautology such as $p\vee \neg p$. Working on a classical meta-theory, DS theory associates a probability of 1 to this tautology. Yet, when evidence is scarce, the belief values assigned to $p$ and $\neg p$ need not add up to one, exemplifying the above super-additivity. In fact, it is compatibly with DS theory that both $p$ and $\neg p$ are even assigned a belief of zero.  In our framework, in contrast, uncertainty or conflict derive straight from the information available about $p$ and $\neg p$, rather than from evidence about some larger proposition. Working with an non-classic, BD-metatheory, non-classic information about literals extends to  complex formulas such as   $ p\vee\neg p$, as witnessed in the inclusion-exclusion axiom (A3). This axiom, in fact, can be seen to stand in direct opposition to the theory of inner measures. Our axioms (A3) implies a subadditivity property  (i.e. $\mu_*(\f \lor \p)\leq \mu_*(\f) + \mu_*(\p) $ when $\f \land \p$ is a  classical contradiction), in contrast  to the superadditivity of DS theory and inner measures.   A detailed comparison beyond DS belief functions and our approach would require a more careful analysis that exceeds the scope of this article. We leave this for future work.


Finally, another open line of inquiry concerns practical implications of the present framework. One may, for instance, ask how an ideally rational agent is to act if she has only imperfect information at her disposal. In future work, we hope to sketch the contours of a non-standard decision theory,  that rests on four-valued probabilities in the same manner as traditional decision theory employs classic probability.  Doing so, we hope, can help to fill a gap between current frameworks for decisions under risk and  under uncertainty.


\nocite{PriestBook}
\bibliographystyle{chicago}

	\end{document}